\newtheorem{theorem}{Theorem}[section]
\newtheorem{lemma}[theorem]{Lemma}
\newtheorem{proposition}[theorem]{Proposition}
\newtheorem{corollary}[theorem]{Corollary}
\newtheorem{problem}[theorem]{Problem}
\theoremstyle{definition}
\newtheorem{definition}[theorem]{Definition}
\newtheorem{example}[theorem]{Example}
\theoremstyle{remark}
\newtheorem{remark}[theorem]{Remark}
\newcommand{\N}{\mathbb{N}}
\newcommand{\Z}{\mathbb{Z}}
\newcommand{\R}{\mathbb{R}}
\newcommand{\C}{\mathbb{C}}
\newcommand{\ra}{\rightarrow}
\newcommand{\lra}{\longrightarrow}
\DeclareMathOperator{\rk}{rk}	
\DeclareMathOperator{\Aut}{Aut}	
\DeclareMathOperator{\Diff}{Diff}	
\DeclareMathOperator{\id}{id}	
\DeclareMathOperator{\Crit}{Crit}	
\newcommand{\set}[1]{\left\{ #1 \right\}}
\newcommand{\scp}[1]{\left\langle { #1 } \right\rangle}
\newcommand{\inv}{^{-1}}
\newcommand{\del}{\partial}
\newcommand{\nubar}{\bar{\nu}} 
\newcommand{\wrt}{with respect to\xspace}
\newcommand{\nbhd}{neighborhood\xspace}
\newcommand{\nbhds}{neighborhoods\xspace}
\numberwithin{equation}{section}
\newcommand{\g}{\gamma}
\newcommand{\G}{\Gamma}
\renewcommand{\S}{\Sigma}
\renewcommand{\Crit}{\mathcal{C}}
\newcommand{\M}{\mathcal{M}}
\newcommand{\mcR}{\mathcal{R}}
	\newcommand{\RR}{\mathcal{R}}
\newcommand{\mcS}{\mathcal{S}}
\newcommand{\SD}{\mathfrak{S}}
\newcommand{\CP}{\mathbb{C}P^2}
\newcommand{\CPbar}{\overline{\mathbb{C}P^2}}
\newcommand{\scc}{simple closed curve\xspace}
\newcommand{\sccs}{simple closed curves\xspace}
\newcommand{\swf}{simple wrinkled fibration\xspace}
\newcommand{\swfs}{simple wrinkled fibrations\xspace}
\newcommand{\Swfs}{Simple wrinkled fibrations\xspace}
\newcommand{\delp}{\del_+}
\newcommand{\delm}{\del_-}
\newcommand{\delpm}{\del_\pm}
\begin{document}

\title{On 4-manifolds, folds and cusps}
\author{Stefan Behrens}
\address{Max-Planck-Institute for Mathematics\\Bonn, Germany}
\date{\today}
\keywords{4-manifolds, fold, cusp, simplified purely wrinkled fibration, broken Lefschetz fibration, surface diagram}

\begin{abstract}
We study \emph{simple wrinkled fibrations}, a variation of the simplified purely wrinkled fibrations introduced in~\cite{Williams1}, and their combinatorial description in terms of \emph{surface diagrams}.
We show that \swfs induce handle decompositions on their total spaces which are very similar to those obtained from Lefschetz fibrations. 
The handle decompositions turn out to be closely related to surface diagrams and we use this relationship to interpret some cut-and-paste operations on 4-manifolds in terms of surface diagrams. This, in turn, allows us classify all closed 4-manifolds which admit \swfs of genus one, the lowest possible fiber genus.
\end{abstract}
\maketitle


\section{Introduction}

After the pioneering work of Donaldson and Gompf on symplectic manifolds and Lefschetz fibrations~\cites{Donaldson,GS} (and later Auroux, Donaldson and Katzarkov on near-symplectic manifolds~\cite{ADK}), the study of singular fibration structures on smooth 4-manifolds has drawn a considerable interest among 4-manifold theorists. 
Among the highlights in the field have been existence results for so called \emph{broken Lefschetz fibrations} over the 2-sphere on all closed, oriented 4-manifolds~\cites{Akbulut-Karakurt,Baykur1,Gay-Kirby,Lekili} as well as a classification of these maps up to homotopy~\cites{Lekili,Williams1}.
Furthermore, the classical observation that Lefschetz fibrations over the 2-sphere are accessible via handlebody theory and can be described more or less combinatorially in terms of collections of simple closed curves on a regular fiber known as the \emph{vanishing cycles}~\cites{Kas,GS} was extended to the broken Lefschetz setting in~\cite{Baykur2}.

Our starting point is the work of Williams~\cite{Williams1} who introduced the closely related notion of \emph{simplified purely wrinkled fibrations}, proved their existence and exhibited a similar combinatorial description of these maps, again by collections of simple closed curves on a regular fiber, which he calls \emph{surface diagrams}.
In particular, it follows that all smooth, closed, oriented 4-manifolds can be described by a surface diagram.
However, the correspondence between simplified purely wrinkled fibrations and surface diagrams has been somewhat unsatisfactory in that it usually involves arguments using broken Lefschetz fibrations and one has to assume the fiber genus to be sufficiently high.

It is one of our goals to provide a detailed and intrinsic account of this correspondence and to clarify the situation in the lower genus cases. 
After that we will give some applications.
Let us describe the contents of this paper in more detail.

\smallskip
We begin by recalling some preliminaries from the singularity theory of smooth maps and the theory of mapping class groups of surfaces. 
This section is slightly lengthy because we intend to use it as a reference for future work.

The following two sections form the technical core of this paper.
In Section~\ref{S:SWFs over general base} we introduce \emph{\swfs} over a general base surface. 
In the case when the base is the 2-sphere our definition is almost equivalent to Williams' simplified purely wrinkled fibrations and our reason for introducing a new name is mainly to reduce the number of syllables.
We then explain how the study of \swfs reduces to certain fibrations over the annulus which we call \emph{annular \swfs}. From these we extract \emph{twisted surface diagram} and establish a correspondence between annular \swfs and twisted surface diagrams (Theorem~\ref{T:annular SWFs <-> twisted SDs}) up to suitable notions of equivalence.
Along the way we show that annular \swfs induce (relative) handle decompositions of their total spaces which are, in fact, encoded in a twisted surface diagram (Section~\ref{S:handle decompositions}).
These handle decompositions bare a very close resemblance with those obtained from Lefschetz fibrations, the only difference appearing in the framings of certain 2-handles.
The section ends with an investigation of the ambiguities for gluing surface bundles to the boundary components of annular \swfs.

In Section~\ref{S:SWFs over disk and sphere} we specialize to the case when the base surface is either the disk or the 2-sphere and recover Williams' setting. 
Using our results about annular \swfs we obtain a precise correspondence between Williams' (untwisted) surface diagrams and certain \swfs over the disk (Proposition~\ref{T:SDs and descending SWFs}).
In particular, our approach provides a direct way to construct a \swf from a given surface diagram circumventing the previously necessary detour via broken Lefschetz fibrations.\footnote{By now this can be considered as a special case of~\cite{GK3} which appeared while we were writing this paper.}

Next, we address the question which surface diagrams describe \swfs that extend to fibrations over the sphere and thus describe closed 4-manifolds. Just as in the theory of Lefschetz fibrations the key is to understand the boundary of the associated \swf over the disk. We show how to identify this boundary with a mapping torus and describe its monodromy in terms of the surface diagram.
Unfortunately, it turns out that the boundary is much harder to understand than in the Lefschetz setting.

We then go on to review the handle decompositions exhibited in Section~\ref{S:SWFs over general base} when the base is the disk or the sphere and describe a recipe for drawing Kirby diagrams for them. 
To complete the picture, we compare our decompositions with the ones obtained via simplified broken Lefschetz fibrations.

In the Sections~\ref{S:substitutions} and~\ref{S:genus 1 classification} we give some applications.
We show that certain substitutions of curve configurations in surface diagrams correspond to cut-and-paste operations on 4-manifolds. 
In particular, we give a surface diagram interpretation of blow-ups and sum-stabilizations, i.e.~connected sums with~$\CP$,~$\CPbar$ and~$S^2\times S^2$. 
Using these we easily obtain a classification of closed 4-manifolds which admit \swfs with the lowest possible fiber genus.
\begin{theorem}\label{T:genus 1 classification, intro}
A smooth, closed, oriented 4-manifold admits a \swf of genus one if and only if it is diffeomorphic to $k S^2\times S^2$ or $m\CP\# n\CPbar$ where~$k,m,n\geq1$.
\end{theorem}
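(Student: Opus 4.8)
The plan is to prove both directions of the biconditional. The backward direction amounts to exhibiting, for each of $kS^2\times S^2$ and $m\CP\#n\CPbar$ (with $k,m,n\ge 1$), an explicit genus-one surface diagram whose associated \swf over $S^2$ has the given total space. Here I would lean on the substitution results of Section~\ref{S:substitutions}: start from a single small model of a genus-one \swf on the simplest building block (e.g.~$S^2\times S^2$ or $\CP\#\CPbar$ realized by a standard genus-one surface diagram on the torus), and then repeatedly apply the surface-diagram versions of sum-stabilization and blow-up established there. Since a blow-up raises the fiber genus in general, the key point is that on the torus the relevant substitutions can be arranged to preserve genus one; I expect this to reduce to a small finite check with explicit curves on $T^2$, using the fact that the mapping class group of the torus is $\mathrm{SL}(2,\Z)$ so that all the needed curve configurations are easy to normalize.

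**The forward direction: from a genus-one \swf to the diffeomorphism type.**

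Suppose $X$ admits a genus-one \swf $f\colon X\to S^2$. First I would pass to the associated surface diagram on the regular fiber $T^2$, using Proposition~\ref{T:SDs and descending SWFs} to cut along the higher-genus side and reduce to a \swf over the disk together with the surface diagram data, i.e.~a cyclically ordered collection of simple closed curves $c_1,\dots,c_\ell$ on $T^2$ with consecutive curves meeting once (so that each pair $(c_i,c_{i+1})$ is a genus-reducing configuration). On $T^2$ every essential simple closed curve is, up to an $\mathrm{SL}(2,\Z)$ change of coordinates, a $(p,q)$-curve, and "meeting once" forces consecutive slopes to have determinant $\pm1$; this is a very rigid combinatorial constraint. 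The strategy is then to use the handle-decomposition description from Section~\ref{S:handle decompositions}: the \swf over the disk gives a handlebody with a $0$-handle, one $1$-handle and one $2$-handle from the "round" part (with the characteristic framing discrepancy noted in the excerpt), plus a $2$-handle for each $c_i$; then the closing-up data over the other disk attaches the dual handles. The whole of $X$ is thereby presented by a Kirby diagram built from chains of $\pm1$-framed (and $0$-framed) $2$-handles along $(p,q)$-curves on $T^2$, and I would identify this diagram by Kirby calculus.

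**Carrying out the classification and the main obstacle.**

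Concretely, I would argue that after an $\mathrm{SL}(2,\Z)$ normalization and using slides, each consecutive pair in the surface-diagram chain contributes either an $S^2\times S^2$ summand or a $\CP$/$\CPbar$ summand to $X$ (the framing discrepancy from Section~\ref{S:handle decompositions} is exactly what distinguishes the $S^2\times S^2$ case from the $\CP\#\CPbar$ case), and that the cyclic closure adds nothing new beyond identifying which total space one lands in. Running the bookkeeping over the whole cyclic chain, one sees that $X$ must be a connected sum of copies of $S^2\times S^2$, $\CP$ and $\CPbar$, and since $S^2\times S^2\#\CP \cong \CP\#\CPbar\#\CP$ (the classical fact that $S^2\times S^2$ and $\CP\#\CPbar$ become diffeomorphic after one stabilization), any mixed sum collapses to one of the two families in the statement; the constraint $k,m,n\ge1$ is forced because a genus-one \swf has nonempty fold locus and hence the round-handle part is always present. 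The main obstacle, as the excerpt itself flags, is understanding the boundary/monodromy well enough to control the closing-up over the second disk — i.e.~showing that the twisted surface diagram on $T^2$ genuinely closes up to a fibration over $S^2$ only in the enumerated cases, and that no exotic gluing produces a total space outside the list. I expect the bulk of the work, and the only real subtlety, to be this finiteness-of-closures argument on the torus; everything else is Kirby calculus that the low genus makes tractable.
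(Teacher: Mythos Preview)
Your overall architecture is right --- reduce to a surface diagram on~$T^2$, peel off connected summands via the substitution moves of Section~\ref{S:substitutions}, and finish with standard diffeomorphisms among the resulting sums --- and this is exactly the scheme the paper follows. But two points are off, one of which is a genuine gap.

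\medskip
\textbf{The misidentified obstacle.} You flag ``understanding the boundary/monodromy well enough to control the closing-up over the second disk'' as the main subtlety. In genus one this is actually a non-issue: the \emph{lower} genus fiber is~$S^2$, whose mapping class group is trivial, so \emph{every} genus-one surface diagram has trivial monodromy and closes off (with only the two $S^2$-bundle gluings to track). The paper states and uses this explicitly in the proof of Proposition~\ref{T:genus 1 classification}. So the closing-off step is free; the real work is elsewhere.

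\medskip
\textbf{The actual gap.} Your forward direction asserts that ``each consecutive pair in the surface-diagram chain contributes either an $S^2\times S^2$ summand or a $\CP/\CPbar$ summand.'' That is not what the substitution lemmas say: a blow-up configuration is a specific length-$3$ subcircuit $(a,\tau_b^{\pm1}(a),b)$ and a sum-stabilization configuration is a specific length-$4$ subcircuit $(a,b,\tau_b^{k}(a),b)$; an arbitrary adjacent pair contributes nothing by itself. What you need is a lemma guaranteeing that \emph{any} closed circuit on~$T^2$ of length~$\geq3$ contains one of these configurations, so that you can strip it off and induct on the length. This is precisely Lemma~\ref{T:genus 1 circuits} in the paper, and its proof is the heart of the argument: writing $\g_i=k_i\g_{i-1}-\g_{i-2}$ in~$H_1(T^2)$ and showing via the recursion $\sigma_i=k_i\sigma_{i-1}-\sigma_{i-2}$ that if $|k_i|\geq2$ for all~$i$ then $|\sigma_i|$ is strictly increasing, which is incompatible with the circuit being closed. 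Hence some $|k_i|\leq1$, and one checks (via Picard--Lefschetz) that $k_i=\pm1$ gives a blow-up configuration and $k_i=0$ a sum-stabilization configuration. Without this step your inductive reduction has no engine.

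\medskip
Two smaller corrections: the handle decomposition from Section~\ref{S:handle decompositions} for a \swf has only the fold $2$-handles (no ``round'' $1$-handle/$2$-handle pair --- that is the simplified broken Lefschetz picture, cf.~Section~\ref{S:relation to BLFs}); and the blow-up substitution of Lemma~\ref{T:blow-up lemma} does \emph{not} raise the fiber genus, so there is nothing to ``arrange'' there.
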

This result should be compared to~\cite{Baykur-Kamada} and~\cite{Hayano1} where a classification of genus one simplified broken Lefschetz fibration is addressed but only partially achieved. 
However, it should also be noted that the latter class of maps is strictly larger than that of genus one simple wrinkled fibrations and it is thus conceivable that the classification is more complicated.

In the final Section~\ref{S:concluding remarks} we close this paper by highlighting what we consider as some of the main problems in the field and by outlining some related developments.

\subsection*{Conventions}
By default all manifolds are smooth, compact and orientable and all diffeomorphisms are orientation preserving. 
When we speak of \nbhds of submanifolds we always mean tubular \nbhds. We use the symbol~$\nu S$ (resp.~$\nubar S$) for an open (resp.~closed) tubular \nbhd of a submanifold~$S\subset M$.

For induced orientations on boundaries we use the \emph{outward normal first} convention.
Moreover, if $f\colon M\ra N$ is smooth, $M$ and~$N$ are connected and~$p\in N$ is a regular value, then orientations on two out of the three manifolds~$M$,~$N$ and~$f\inv(p)$ induce an orientation on the third as follows. 
There is a small ball~$D\subset N$ containing~$p$ such that~$f\inv(D)$ can naturally be identified with~$f\inv(p)\times D$ and we choose the third orientation such that this identification preserves orientations where~$f\inv(p)\times D$ carries the product orientation.

Finally, (co-)homology is always taken with integral coefficients.
Exceptions to these rules will be explicitly stated and we reserve the right to sometimes restate some of the conditions for emphasis.

\subsection*{Acknowledgements}
This work is part of the author's ongoing PhD project carried out at the Max-Planck-Institute for Mathematics in Bonn, Germany. 
The author would like to thank Inanc Baykur for helpful comments on an early draft of this paper as well as his advisor Prof.~Dr.~Peter Teichner for letting him work on this project.
The author is supported by an IMPRS Scholarship of the Max-Planck-Society.

\section{Preliminaries}
	\label{S:preliminaries}
To fix some terminology, let $f\colon M\ra N$ be a smooth map with differential $df\colon TM\ra TN$. A \emph{critical point} (or a \emph{singularity}) of~$f$ is a point~$p\in M$ such that~$df_p$ is not surjective. The set of critical points, called the \emph{critical locus} of $f$, will be denoted by 
\begin{equation*}
	\Crit_{f}:=\left\{p\in M\middle\vert \rk df_p < \dim N\right\} \subset M.
\end{equation*}
The image of a critical point is called a \emph{critical value} and the set of all critical values is called the \emph{critical image} of~$f$.

As customary, we will call the preimage of a point a \emph{fiber}, usually decorated with the adjectives regular or singular indicating whether or not the fiber contains critical points. Note that regular fibers are always smooth submanifolds with trivial normal bundle.

\subsection{Folds, cusps and Lefschetz singularities}
	\label{S:singularity theory}
As a warm up, 
recall that a generic map from any compact manifold to a 1-dimensional manifold has only finitely many critical points on which it is injective and, moreover, all critical points are of \emph{Morse type}, i.e. they are locally modeled\footnote{A map $f\colon M^m\ra N^n$ is \emph{locally modeled around $p\in M$} on $f_0\colon\colon\R^m\ra \R^n$ if there are local coordinates around~$p$ and~$f(p)$ mapping these points to the origin such that the coordinate representation of~$f$ agrees with~$f_0$.} 
on the maps
	\begin{equation*}
	(x_1,\dots,x_n) \mapsto -x_1^2-\dots-x_k^2+x_{k+1}^2+\dots+x_n^2,
	\end{equation*}
where the number~$k$ is called the \emph{(Morse) index} of the critical point. 

A similar statement holds for maps to surfaces. For convenience we will take the source to be 4-dimensional from now on.
In this setting the Morse critical points are replaced by two types of singularities known as \emph{folds} and \emph{cusps} which can also be described in terms of local models. 
The model for a fold singularity is the map $\R^4\ra \R^2$ given by the formula 
\begin{equation}\label{E:fold model}
	(t,x,y,z)\mapsto (t,-x^2-y^2\pm z^2)
\end{equation}
and the cusps are locally modeled on
\begin{equation}\label{E:cuspmodel}
	(t,x,y,z)\mapsto (t,-x^3 +3tx-y^2\pm z^2).
\end{equation}
If the sign in either of the above equations is positive (resp.\ negative), then the singularity is called \emph{indefinite} (resp.\ \emph{definite}). 

An easy calculation shows that the critical loci of the fold and cusp models are given by $\left\{\, (r,0,0,0) \;\middle\vert\; r\in\R \,\right\}$ and $\left\{\, (r^2,r,0,0) \;\middle\vert\; r\in\R \,\right\}$, respectively.
As a consequence, the critical image of a smooth map is a smooth 1-dimensional submanifold near fold and cusp points.
The critical images of both models are shown in Figure~\ref{F:fold and cusp}.
	\begin{figure}[h]
	\label{F:fold and cusp}
	\includegraphics{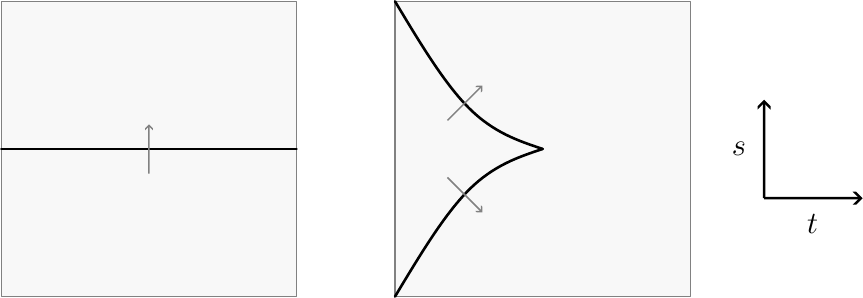}
	\caption{The critical images of the fold and cusp models.}
	\end{figure}
Note that the critical image is smoothly embedded in the fold model where as in the cusp case it is topologically embedded via a smooth homeomorphism whose inverse fails to be smooth only at the cusp point.

It follows directly from the models that folds always come in 1-dimensional families on which the map restricts to an immersion. We will usually be sloppy and refer to such an arc of fold points in the source as well as their image in the target as \emph{fold arcs}. 
Furthermore, cusps are isolated in the critical locus in the sense that there is a small \nbhd which contains no other cusps. However, cusps are not isolated singularities. 
In fact, one can show that any cusp is surrounded by two fold arcs, at least one of which is indefinite. 

We can now state the normal form of generic maps from 4-manifolds to surfaces.

\begin{theorem}[Normal form of maps to surfaces]\label{T:generic maps to surfaces}
A generic map from a 4-manifold to a surface has only fold and cusp singularities, it is injective on the cusps and restricts to an immersion with only transverse intersections between fold arcs.
\end{theorem}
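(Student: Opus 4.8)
The plan is to obtain the local normal forms by reducing, near each critical point, to a one-parameter family of functions on a $3$-manifold, and then to get the global statements (only transverse crossings of fold arcs, injectivity on cusps) from a transversality argument applied to the restriction of $f$ to its critical locus.

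\emph{Local normal forms.} The corank-two stratum $\Sigma^2\subset J^1(M,N)$ has codimension $2(|4-2|+2)=8>4$, so by Thom's jet transversality theorem a generic $f$ has only corank-one critical points. Near such a point $p_0$, choose a coordinate $t$ on $N$ with $d(t\circ f)_{p_0}\neq 0$; then $t\circ f$ is a submersion near $p_0$ and we may pick source coordinates $(t,x,y,z)$ in which $f=(t,g)$ for some function $g=g(t,x,y,z)$. A point is then critical for $f$ exactly when $\partial_x g=\partial_y g=\partial_z g=0$, i.e.\ when it is a critical point of $g_t:=g(t,\cdot)\colon\R^3\to\R$, so locally $\Crit_f$ is the critical set of the one-parameter family $\{g_t\}$. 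Genericity of $f$ now translates into genericity of this family (equivalently, transversality of the $3$-jet of $f$ to the Thom--Boardman strata $\Sigma^{1,0},\Sigma^{1,1,0},\Sigma^{1,1,1}$, whose preimages in $M$ generically have codimension $3,4,5$): we may assume each $g_t$ is Morse except at isolated values of $t$, where it acquires a single birth--death ($A_2$) point and nothing worse. The parametrized Morse lemma, respectively the standard normal form for a birth--death, then bring $f$ into the shapes~\eqref{E:fold model} and~\eqref{E:cuspmodel}, with the sign recording the signature of the leftover nondegenerate quadratic part of $g_t$ and so distinguishing the definite and indefinite cases. In particular $\Crit_f$ is everywhere locally a smooth arc, hence a smooth compact $1$-manifold; the cusps are isolated in it; and from the critical loci of the two models computed above, $f$ restricts to an immersion on $\Crit_f$ away from the cusps and is locally injective on it.

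\emph{Global position.} Since $M$ is compact there are only finitely many cusps and finitely many self-intersections of $f|_{\Crit_f}$, so what remains — that distinct cusps have distinct images, that no cusp value lies on a fold arc, and that the fold arcs cross each other and themselves only in transverse double points — is a finite list of genericity conditions on the immersion of the $1$-manifold $\Crit_f\setminus\{\text{cusps}\}$ into $N$ together with the positions of the cusp values. They are secured by the multijet transversality theorem.

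\emph{Main obstacle.} The delicate point is that this last step is not a transversality statement about $f$ itself, since the source $\Crit_f$ deforms when $f$ is perturbed. The way around this is that folds are stable: a tubular neighborhood of the fold locus carries a canonical product structure in which $f$ is determined, up to finite-order data, by its restriction to $\Crit_f$, so compactly supported perturbations of $f$ inside such a neighborhood — chosen with support disjoint from the finitely many cusps — move $f|_{\Crit_f}$ essentially freely. A standard transversality/Sard argument applied to the evaluation maps on ordered pairs of distinct points of $\Crit_f$, and on the finite cusp set, then yields the required generic configuration. Near a cusp one makes no perturbation: the model~\eqref{E:cuspmodel} shows directly that a small neighborhood of each cusp already has the desired form and contributes no crossings, and the two regimes are matched along the fold arcs emanating from the cusp.
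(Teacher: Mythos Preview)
Your sketch is correct and follows the standard singularity-theory argument (Thom--Boardman stratification plus multijet transversality, with the local normal forms obtained via the one-parameter Morse/Cerf viewpoint). The paper does not give its own proof of this theorem at all: it simply defers to~\cite{GG}, so there is no in-paper argument to compare against. Your approach is essentially the one found in that reference, so in that sense you have reproduced what the paper intends the reader to consult.
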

Note, in particular, that the above discussion shows that the critical locus of a generic map to a surface is a smooth 1-dimensional submanifold of the source.
For more details, including a proof of the above theorem for arbitrary source dimension, we refer the reader to~\cite{GG}. 
\begin{remark}\label{R:Morse 2-functions}
Recently, these generic maps to surfaces have appeared under the name \emph{Morse 2-functions} in the work of Gay and Kirby~\cites{GK1,GK2,GK3}. 
\end{remark}

In what follows we will only deal with indefinite singularities. So from now on, when we speak of folds and cusps, we will always mean the indefinite ones.

\smallskip
Figure~\ref{F:fold and cusp} contains some further decorations which we will now explain. 
Both, the fold and the cusp singularity are intimately related to 3-dimensional Morse-Cerf theory. 
The fold models a trivial homotopy of a Morse functions with one critical point (of index two) on the vertical slices. This means that the model restricted to a small arc transverse to the fold locus is a Morse function with one critical point of index one or two depending on the direction. The arrows in the picture indicate the direction in which the index is two. 
Note that the topology of the fibers of either side of a fold arc is necessarily different.

Similarly, the cusp is also a homotopy of Morse functions on the vertical slices, although a nontrivial one. It models the cancellation of a pair of critical points (of index one and two). 
The arrows indicate the index two direction of the fold arcs adjacent to the cusp.

\smallskip
For the moment, this is all we have to say about folds and cusps. 
Another important type of singularity which has its roots in (complex) algebraic geometry is the \emph{Lefschetz singularity} and its local model is given in complex coordinates by
\begin{equation*}
	L \colon \C^2 \ra \C
	\quad ; \quad
	(z,w)\mapsto zw.
\end{equation*}
At this point it becomes important whether the charts that we use to model the map are orientation preserving. Although this does not matter for folds and cusps\footnote{For both models there are orientation reversing diffeomorphisms which leave the map invariant}, it makes a surprisingly big difference in the case of Lefschetz singularities. So from now on we will always use orientation preserving charts to model singularities whenever the source or target are oriented.

\medskip
As stated in the introduction, maps with (indefinite) fold, cusp and Lefschetz singularities have been prominently featured in many research papers over the past decade. Unfortunately, various authors have used various names for various types of maps and there is yet no commonly accepted terminology in the field.
For the purpose of this paper we will use the following terminology.
\begin{definition}\label{D:singular fibrations}
A surjective map $f\colon X\ra B$ from an oriented 4-manifold to an oriented surface is called 
(a) a \emph{wrinkled fibration}, 
(b) a \emph{(broken) Lefschetz fibration} or 
(c) a \emph{broken fibration}
if its critical locus contains only
\begin{enumerate}
	\item[(a)] indefinite folds and cusps,
	\item[(b)] Lefschetz singularities (and indefinite folds),
	\item[(c)] indefinite folds, cusps and Lefschetz singularities,
\end{enumerate}
all critical points are contained in the interior of~$X$ and all intersections in the critical image are transverse intersections of fold arcs.
\end{definition}
In accordance with the use of the word fibration we will usually refer to the source as the \emph{total space} and to the target as the \emph{base}.
Note that the regular fibers of a broken fibration are (orientable) surfaces. Furthermore, if we assume that $\del X= f\inv(\del B)$, which we will do later on, then the fibers are closed. 

\smallskip
It is quite useful to think of broken fibrations as (singular) families of surfaces parametrized by the base. 
More precisely, the images of the folds and cusps cut the base into several regions which may or may not contain Lefschetz singularities. The regular fibers are (orientable) surfaces whose topological type depends only on the region that it maps into.
One thus decorates the base with the topological type of the fibers over each region together with some information about what happens to a fiber if one crosses a fold arc (the little arrows we have indicated above together with the corresponding fold vanishing cycle) or runs into a Lefschetz singularity (the Lefschetz vanishing cycle). Under certain circumstances this data is enough to determine the map as we will see later on (see also~\cite{GK3}).

\medskip
We finish this section with a short review of the homotopy classification of broken fibrations over~$S^2$ that was mentioned in the introduction. 
An important contribution of Lekili~\cite{Lekili} is that he showed how to pass back and forth between broken Lefschetz fibrations and wrinkled fibrations via two \emph{local homotopies}, i.e.~homotopies that are supported in arbitrarily small balls.
As portrayed in Figure~\ref{F:Lefschetz vs cusp} one can \emph{wrinkle} a Lefschetz point into an indefinite triangle (i.e. an indefinite circle with three cusps) and one can exchange a cusp for a Lefschetz singularity, this move is sometimes called \emph{unsinking} a Lefschetz point from a fold. 
(Moreover, he showed that these modifications work equally well with achiral Lefschetz singularities which, together with the results of~\cite{Gay-Kirby}, proves the existence of broken Lefschetz fibrations.) 
	\begin{figure}
	\includegraphics{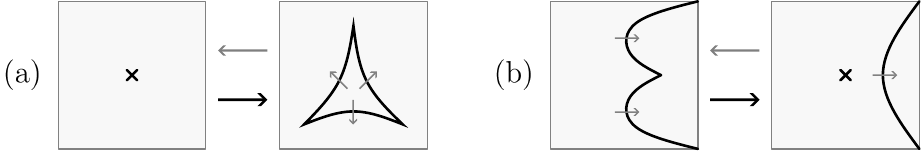}
	\caption{(a) Wrinkling and (b) unsinking a Lefschetz singularity.}
	\label{F:Lefschetz vs cusp}
	\end{figure}
As a consequence, one can translate questions about broken fibrations into questions about wrinkled fibrations which are accessible by means of singularity theory. 
For example, there is a structural result similar to Theorem~\ref{T:generic maps to surfaces} for generic homotopies between wrinkled fibrations. 
The basic building blocks include isotopies of the base and total space and three types of modifications (and their inverses) that are realized by local homotopies:
the \emph{birth/death}, the \emph{merge} and the \emph{flip}. 
Figure~\ref{F:basic homotopies} shows their effect on the critical image. 
	\begin{figure}[h]
	\includegraphics{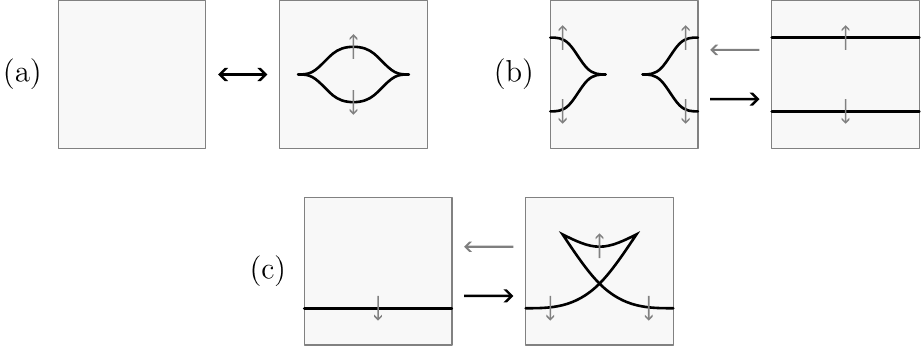}
	\caption{The basic local homotopies: (a) birth, (b) merge, (c) flip.}
	\label{F:basic homotopies}
	\end{figure}
In general, such a generic homotopy will pass through maps with definite singularities. 
However, the main theorem in~\cite{Williams1} states that indefinite singularities can, in fact, be avoided. In other words, any two homotopic wrinkled fibrations are homotopic through wrinkled fibrations.

\begin{remark}\label{R:reversing homotopy moves}
It has become common to refer to an application of any of the above mentioned modifications as \emph{moves} performed on a broken fibration.
It is important to note that most of these moves are not strictly reversible in the following sense. If the critical image of a given broken fibration exhibits a configuration as on the left hand side of any of the pictures, then it is always possible to replace it by the configuration on the right hand side. However, it might not be possible to go into the other direction. The only exception is the birth. In all other cases some extra conditions are needed to go from right to left. This is indicated in our pictures with shaded arrows.
\end{remark}
\begin{remark}\label{R:merging folds and cusps}
There has been some disagreement in the literature about which direction in Figure~\ref{F:basic homotopies}(b) should be called merge and which inverse merge. 
To avoid this decision we will simply speak of \emph{merging cusps} and \emph{merging folds}, respectively. 
\end{remark}

\subsection{Surfaces and simple closed curves}
	\label{S:mapping class groups review}

As we pointed out, the regular fibers of broken fibrations are surfaces and these fibers will be prominently featured later on. 
Unfortunately, this is yet another field of mathematics in which different authors use different conventions and, in the current author's experience, it can be confusing to decide whether a statement in some reference actually applies to a situation at hand.
For that reason we will give very precise definitions, deliberately risking to be overly precise.

\smallskip
By a \emph{surface}~$\S$ we mean a compact, orientable, 2-dimensional manifold, possibly with boundary and some marked points in the interior.
A \emph{simple closed curve} in~$\S$ is a closed, connected, 1-dimensional submanifold of~$\S$ that does not meet the boundary or the marked points. 
We usually consider \sccs up to ambient isotopy in~$\S$ relative to~$\del\S$ and the marked points and will not make a notational distinction between a \scc and its isotopy class.
Note that according our definition \sccs are unoriented objects. However, from time to time it will be convenient to choose orientations on them in order to speak of their homology classes.

\smallskip
Given two \sccs $a,b\subset\S$ we define their \emph{geometric intersection number} as
\begin{equation*}
	i(a,b):=\min\left\{ \#(\alpha\cap\beta) \middle\vert \alpha\sim a,\; \beta\sim b,\; \alpha\pitchfork \beta \right\}\in\N
\end{equation*}
where the signs $\sim$ and $\pitchfork$ indicate isotopy and transverse intersection. 
If the curves as well as the surface are oriented, then we also have an \emph{algebraic intersection number} which is obtained by a signed count of intersections after making the curves transverse. Equivalently, this number can be described as
	\begin{equation*}
	\scp{a,b}:=\scp{[a],[b]}_\S:=\scp{[a],[b]}_{H_1(\S)}\in\Z
	\end{equation*}
where bracket on the right hand side denotes the intersection form on~$H_1(\S)$.

Note that the algebraic intersection number is alternating and depends only on the homology classes of the oriented \sccs while the geometric intersection number is symmetric and depends on the isotopy classes.
Both intersection numbers have the same parity (i.e.~even or odd) and satisfy the inequality
	\begin{equation}\label{E:algebraic vs geometric intersections}
	|\scp{a,b}|\leq i(a,b).
	\end{equation}
We say that $a$ and $b$ are \emph{geometrically dual} (resp.\ \emph{algebraically dual}) if their geometric (reap.\ algebraic) intersection number is one.

\smallskip
A \scc $a\subset\S$ is called \emph{non-separating} if its complement is connected, otherwise it is called \emph{separating}. 
Note that a \scc is separating if and only if it is null-homologous (with either orientation) and thus \sccs that have geometric or algebraic duals are automatically non-separating.

\subsubsection{Diffeomorphisms of surfaces}
Let us now turn to diffeomorphisms of surfaces. 
Let $\Diff^+(\S,\del\S)$ denote the set of orientation preserving diffeomorphisms that restrict to the identity on~$\del\S$ and preserve the set of marked points. 
The \emph{mapping class group} of~$\S$ is defined as 
	\begin{equation*}
		\M(\Sigma):=\pi_0(\Diff^+(\S,\del\S),\id).
	\end{equation*}
Given a \scc $a\subset\S$ there is a well defined mapping class $\tau_a\in\M(\S)$ called the (right-handed) \emph{Dehn twist} about~$a$. 
Similarly, any simple arc~$r\subset\S$ that connects two distinct marked points gives rise to a \emph{half twist}~$\bar{\tau}_r\in\M(\S)$.

It is well known that $\M(\S)$ is generated by the collection of Dehn twist and half twists, where the latter are only needed in the presence of marked points.
On the other hand, mapping classes can be effectively studied by their action on (isotopy classes of) simple closed curves. 
In particular, it is desirable to understand the effect of Dehn twists on \sccs. While this can be tricky, the situation simplifies significantly on the level of homology classes.
\begin{proposition}[Picard-Lefschetz formula]\label{T:Picard-Lefschetz formula}
Let $\S$ be a surface, $a\subset\S$ a \scc and let $x\in H_1(\S)$. Then for any orientation on $a$ we have
\begin{equation}
(\tau_a^k)_*x=x+k\scp{[a],x}[a].
\end{equation}
In particular, if~$b$ is an oriented \scc, then
\begin{equation}
[\tau_a^k(b)]=[b]+k\scp{[a],[b]}[a].
\end{equation}
\end{proposition}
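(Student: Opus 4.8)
The plan is to prove the homological statement directly from the definition of the Dehn twist $\tau_a$ and then specialize. First I would observe that it suffices to prove the case $k=1$: once we know $(\tau_a)_* x = x + \scp{[a],x}[a]$ for all $x \in H_1(\S)$, the general case follows by induction, since applying $(\tau_a)_*$ to $x + (k-1)\scp{[a],x}[a]$ gives $x + (k-1)\scp{[a],x}[a] + \scp{[a],\,x + (k-1)\scp{[a],x}[a]}[a]$, and the self-intersection term $\scp{[a],[a]} = 0$ (the intersection form is alternating) kills the cross term, leaving $x + k\scp{[a],x}[a]$. Similarly the ``in particular'' statement for an oriented \scc $b$ is just the first formula applied to $x = [b]$, using that the Dehn twist is a diffeomorphism so $(\tau_a^k)_*[b] = [\tau_a^k(b)]$.

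Next I would reduce the $k=1$ case to a local computation in an annular neighborhood of $a$. Fix an oriented closed regular \nbhd $A \cong S^1 \times [-1,1]$ of $a$, with $a = S^1 \times \{0\}$, chosen so that $\tau_a$ is supported in $A$ and acts there by the standard twist $(\theta, s) \mapsto (\theta + \pi(s+1), s)$ (or any representative of the right-handed twist). Since $\tau_a = \id$ outside $A$, the induced map on $H_1$ is the identity on any class represented by a cycle disjoint from $A$. The key point is therefore to understand $(\tau_a)_*$ on a cycle that crosses $A$ transversally. I would use a Mayer--Vietoris / excision argument, or more concretely a CW or simplicial model: decompose an arbitrary class $x$ as a sum of pieces, isolating the part of a representative cycle lying in $A$. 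If the representative meets $a$ transversally in points $p_1, \dots, p_n$ with signs $\eps_1, \dots, \eps_n$, then $\sum \eps_i = \scp{[a],x}$ by definition of the algebraic intersection number, and the twist $\tau_a$ replaces each little arc of the cycle crossing $A$ at $p_i$ by that arc concatenated with a copy of $a$ traversed with sign $\eps_i$ (this is exactly the picture of how a Dehn twist modifies a transverse arc). Summing, the homology class changes by $\bigl(\sum_i \eps_i\bigr)[a] = \scp{[a],x}[a]$.

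The step I expect to be the main obstacle — or at least the one requiring the most care — is making the ``each crossing arc picks up one copy of $[a]$ with the correct sign'' claim precise at the level of homology rather than hand-waving with a picture, and in particular getting the sign conventions consistent with the paper's chosen orientation conventions (outward-normal-first on boundaries, the right-handed Dehn twist, and the intersection form on $H_1(\S)$). Concretely one has to check that the \emph{right-handed} twist contributes $+[a]$ exactly when the crossing sign is $+1$; a sign error here would flip the formula to $x - k\scp{[a],x}[a]$, which is the left-handed convention. I would pin this down by testing on the model case of a torus (or a once-punctured torus) with standard generators $\alpha, \beta$ where $\scp{[\alpha],[\beta]} = 1$: there $\tau_\alpha(\beta)$ is the well-known curve with $[\tau_\alpha(\beta)] = [\beta] + [\alpha]$, which fixes the sign once and for all, and then invoke naturality of the whole setup under the inclusion of an annular neighborhood to transport the computation to the general surface. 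Everything else — the induction on $k$, vanishing of $\scp{[a],[a]}$, and passing from classes to curves — is routine.
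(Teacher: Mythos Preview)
Your argument is correct and is essentially the standard proof of the Picard--Lefschetz formula: reduce to $k=1$ by induction using $\scp{[a],[a]}=0$, then do the local computation in an annular \nbhd of~$a$, observing that each transverse crossing of a representative cycle with~$a$ picks up exactly one signed copy of~$[a]$, and fix the sign once and for all on a torus model. The only point I would watch is that you consistently use the paper's right-handed convention for~$\tau_a$, but you already flag this.

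By contrast, the paper does not prove this statement at all: its ``proof'' is a one-line citation of \cite{primer}, Proposition~6.3. So your proposal is strictly more than what the paper provides. If you want to match the paper, you could simply cite Farb--Margalit; if you want a self-contained account, your outline is fine, and in fact the argument you sketch is the same one given in that reference.
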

\begin{proof}
See \cite{primer}, Proposition~6.3.
\end{proof}
\begin{remark}\label{R:Picard-Lefschetz on torus}
The Picard-Lefschetz formula is particularly useful for the torus since, in that case, mapping classes are completely determined by their action on homology.
\end{remark}

Another useful tool is the so called \emph{change of coordinates principle} which roughly states that any two configurations of \sccs on a surface with the same intersection pattern can be mapped onto each other by a diffeomorphism. We will only use the following special cases. For details we refer to~\cite{primer}, Chapter~1.3.
\begin{proposition}[Change of coordinates principle]\label{T:change of coordinates principle}
If~$a,b\subset\S$ is a pair of non-separating \sccs, then there exists some $\phi\in\Diff^+(\S,\del\S)$ such that~$\phi(a)=b$.
Furthermore, if~$a,b$ and~$a',b'$ are two pairs of geometrically dual curves, then there is some  $\phi\in\Diff^+(\S,\del\S)$ such that~$\phi(a)=a'$ and~$\phi(b)=b'$.
\end{proposition}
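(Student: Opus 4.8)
The plan is to prove both statements by the familiar \emph{cut--classify--reglue} strategy, relying on two facts: (i) the classification of compact, connected, orientable surfaces with boundary and interior marked points up to diffeomorphism by genus, number of boundary circles, and number of marked points; and (ii) the \emph{extension principle}, that if two such surfaces $\S_1,\S_2$ are abstractly diffeomorphic, then any diffeomorphism $\del\S_1\ra\del\S_2$ together with any bijection between their marked points extends to an orientation preserving diffeomorphism $\S_1\ra\S_2$. I will also use repeatedly that $\Diff^+(S^1)$ is connected, so that any two identifications of a pair of circles are isotopic.

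For the first statement, I would cut $\S$ along $a$ to obtain a surface $\S_a$ with quotient map $q_a\colon\S_a\ra\S$; since $a$ is non-separating, $\S_a$ is connected and $q_a\inv(a)$ is a pair of boundary circles of $\S_a$. Cutting along $b$ likewise gives a connected surface $\S_b$ with two distinguished boundary circles. Cutting along a \scc leaves the Euler characteristic unchanged and adds exactly two boundary circles and no marked points, so $\S_a$ and $\S_b$ agree in genus, number of boundary circles, and number of marked points, hence are diffeomorphic. By the extension principle I would then choose a diffeomorphism $\Phi\colon\S_a\ra\S_b$ that restricts to the identity on $\del\S\subset\del\S_a$, carries marked points to marked points, sends the two distinguished circles of $\S_a$ onto those of $\S_b$, and intertwines the two gluing maps; this last compatibility is arranged by an isotopy supported near the distinguished circles, using connectedness of $\Diff^+(S^1)$. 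Then $\Phi$ descends to the desired $\phi\in\Diff^+(\S,\del\S)$ with $\phi(a)=b$.

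For the second statement, I would first observe that, since $i(a,b)=1$, a regular neighborhood $N$ of $a\cup b$ is a one-holed torus in which $a$ and $b$ form a standard generating pair meeting in one point, and similarly for a neighborhood $N'\supset a'\cup b'$; I may shrink $N$ and $N'$ so that they contain no marked points and lie in the interior of $\S$. The assertion for the one-holed torus is elementary and can be checked directly from an explicit model, yielding a diffeomorphism $\psi\colon N\ra N'$ with $\psi(a)=a'$ and $\psi(b)=b'$. Next I would check that $\S\setminus\operatorname{int}N$ is connected: cutting $\S$ along $a$ turns $b$ into an arc joining the two new boundary circles, and deleting a neighborhood of this arc from the (connected) cut surface leaves it connected, while the result is diffeomorphic to $\S\setminus\operatorname{int}N$. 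The complements $\S\setminus\operatorname{int}N$ and $\S\setminus\operatorname{int}N'$ then share Euler characteristic, number of boundary circles, and marked points, so by the extension principle I can choose a diffeomorphism between them that is the identity on $\del\S$, respects the marked points, and agrees with $\psi$ along $\del N\ra\del N'$ up to the isotopically unique collar identification. Gluing it to $\psi$ along $\del N$ produces $\phi\in\Diff^+(\S,\del\S)$ with $\phi(a)=a'$ and $\phi(b)=b'$; the degenerate case in which $\S$ is a closed torus (so that the complement is an annulus) is handled in exactly the same way.

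The hard part is not the classification of surfaces but the regluing bookkeeping: one must arrange each partial diffeomorphism to be genuinely the identity on $\del\S$, to honor the marked points, and to be compatible with collars and orientations along every cutting curve, so that the pieces patch together into an \emph{honest} smooth self-diffeomorphism of $\S$ rather than one merely isotopic to the desired normalization. This is exactly where the extension principle and the connectedness of $\Diff^+(S^1)$ do the work; a complete treatment can be found in~\cite{primer}, Chapter~1.3.
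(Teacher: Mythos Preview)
Your proposal is correct and follows the standard cut--classify--reglue argument; this is exactly the approach in the reference the paper cites (\cite{primer}, Chapter~1.3), and the paper itself gives no independent proof beyond that citation. There is nothing to compare: you have supplied the details the paper omits.
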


\subsubsection{Mapping tori and their automorphisms}
	\label{S:mapping tori}
Given a surface~$\S$ and a diffeomorphism $\mu\colon \S\ra \S$ we can form its \emph{mapping torus}
	\begin{equation*}
	\S(\mu) := \big( \S\times[0,1] \big) / \big( (x,1)\sim(\mu(x),0) \big)
	\end{equation*}
which is a 3-manifolds that carries a canonical map to~$S^1\cong [0,1]/\{0,1\}$ which turns out to be a submersion. In other words, $\S(\mu)$ fibers over~$S^1$. 
If $\S$ is oriented and $\mu$ is orientation preserving, then our conventions in the introduction induce an orientation on~$\S(\mu)$.
It is well known that all surface bundles over~$S^1$ can be described as mapping tori. Indeed, if a 3-manifold fibers over~$S^1$, then one chooses a fiber and a lift of a vector field that determines the orientation of~$S^1$ and the return map of the flow of this vector field induces a diffeomorphism of the fiber which is usually called the \emph{monodromy}.

\smallskip
Let $Y$ be an oriented 3-manifold that fibers over the circle via a map~$f\colon Y\ra S^1$. 
An \emph{automorphism} of $(Y,f)$ is an orientation and fiber preserving diffeomorphism of~$Y$. We denote the group of automorphisms by~$\Aut(Y,f)$ or simply by~$\Aut(Y)$ when the fibration is clear from the context.
If we identify~$Y$ with a mapping torus, say~$\S(\mu)$, then we obtain a description of~$\Aut(Y)$ in terms of diffeomorphisms of~$\S$.
Indeed, any element~$\phi\in\Aut(\S(\mu))$ can be considered as a path~$(\phi_t)_{t\in[0,1]}$ in~$\Diff^+(\S)$ connecting some element~$\phi_0\in\Diff^+(\S)$ to~$\phi_1=\mu\inv\phi_0\mu$. 
In particular, $\phi_0$~must be isotopic to~$\mu\inv\phi_0\mu$ and thus represents an element of~$C_{\M(\S)}(\mu)$, the centralizer in~$\M(\S)$ of (the mapping class represented by)~$\mu$. 
Elaborating on this idea one arrives at the conclusion that 
	\begin{equation}\label{E:automorphisms of mapping tori}
	\pi_0\big(\Aut(Y)\big) \cong \pi_0\big(\Aut(\S(\mu))\big) \cong 
	C_{\M(\S)}(\mu) \ltimes \pi_1(\Diff(\S),\id),
	\end{equation}
where the multiplication on the right hand side is given by 
	\begin{equation*}
	(g,\sigma)\cdot(h,\tau)=(h\circ g, (g\inv\tau g)\ast\sigma ).
	\end{equation*}
This means that there are essentially two types of automorphism of mapping tori, the ones that are constant on the fibers coming from $C_{\M(\S)}(\mu)$ and the ones coming from~$\pi_1(\Diff(\S),\id)$ that vary with the fibers and restrict to the identity on the reference fiber.
Fortunately, there are no non-constant automorphisms most of the time due to the following classical result.
\begin{theorem}[Earle-Eells,~\cite{Earle-Eells}]\label{T:Earle-Eells}
Let $\S$ be a closed, orientable surface of genus~$g$ without marked points. Then
	\begin{equation*}
	\pi_1(\Diff(\S),\id)\cong
	\begin{cases}
	\Z_2 & \text{if $g=0$} \\
	\Z\oplus\Z & \text{if $g=1$} \\
	1 & \text{if $g\geq2$}.
	\end{cases}
	\end{equation*}
\end{theorem}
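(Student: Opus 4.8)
The plan is to reduce the theorem to a statement about the identity component $\Diff_0(\S)\subset\Diff(\S)$, namely that $\Diff_0(\S)$ is contractible for $g\geq2$, homotopy equivalent to $SO(3)$ for $g=0$, and homotopy equivalent to the torus $T^2$ for $g=1$; granting this, the table of fundamental groups is immediate from $\pi_1(SO(3))\cong\Z_2$ and $\pi_1(T^2)\cong\Z\oplus\Z$ (and $\Diff_0(\S)$ has the homotopy type of a CW complex, being a separable Fr\'echet manifold, so weak equivalences upgrade to equivalences). To get at $\Diff_0(\S)$ I would let it act by pullback on the space $\mathcal{J}=\mathcal{J}(\S)$ of almost complex structures on $\S$ compatible with the orientation. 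Two preliminary observations: $\mathcal{J}$ is contractible, being the space of sections of a bundle over $\S$ whose fibre is the contractible space of orientation-compatible linear complex structures on a tangent plane (a copy of the hyperbolic plane); and, since almost complex structures on a surface are automatically integrable, the quotient $\mathcal{J}/\Diff_0(\S)$ is the Teichm\"uller space $\mathcal{T}_g$, which is classically a ball --- a point for $g=0$, the upper half-plane for $g=1$, and $\R^{6g-6}$ in Fenchel--Nielsen coordinates for $g\geq2$ --- hence contractible.

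The heart of the argument would be the slice theorem for the action of the (tame Fr\'echet Lie) group $\Diff(\S)$ on $\mathcal{J}$: working over $H^s$-completions and exploiting ellipticity one shows the action is proper and admits local slices, so that $\mathcal{J}\to\mathcal{J}/\Diff_0(\S)=\mathcal{T}_g$ is a locally trivial fibre bundle whose fibre over a point $[J_0]$ is the orbit $\mathcal{O}_{J_0}$, and the orbit map identifies $\mathcal{O}_{J_0}\cong\Diff_0(\S)/\mathrm{Stab}(J_0)$, where $\mathrm{Stab}(J_0)=\Aut(\S,J_0)\cap\Diff_0(\S)$ is the group of biholomorphisms of $(\S,J_0)$ isotopic to the identity. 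Since both $\mathcal{J}$ and the base $\mathcal{T}_g$ are contractible, the bundle is homotopically trivial, so $\Diff_0(\S)/\mathrm{Stab}(J_0)\simeq\mathcal{J}\simeq\mathrm{pt}$; feeding this into the long exact homotopy sequence of the principal bundle $\mathrm{Stab}(J_0)\to\Diff_0(\S)\to\Diff_0(\S)/\mathrm{Stab}(J_0)$ reduces everything to identifying $\mathrm{Stab}(J_0)$.

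For that identification I would run a Lefschetz fixed-point count. Any orientation-preserving finite-order diffeomorphism $\phi\neq\id$ of $\S$ has a finite fixed-point set, each point of Lefschetz index $+1$ (near a fixed point $\phi$ is conjugate to a nontrivial rotation), so its Lefschetz number equals $\#\mathrm{Fix}(\phi)\geq0$; if moreover $\phi$ is isotopic to the identity this number is $\chi(\S)=2-2g$, which forces $g\leq1$. Hence for $g\geq2$ the group $\Aut(\S,J_0)$ --- finite by Hurwitz --- meets $\Diff_0(\S)$ trivially, $\Diff_0(\S)$ is contractible, and $\pi_1(\Diff(\S))=1$. For $g=1$, $\Aut(T^2,J_0)$ is an extension of a finite cyclic group of point-fixing rotations (each acting nontrivially on $H_1(T^2)$, hence not isotopic to the identity) by the translation subgroup $\Theta\cong T^2$; thus $\mathrm{Stab}(J_0)=\Theta$, the long exact sequence collapses to $\pi_n(\Diff_0(T^2))\cong\pi_n(T^2)$, and in particular $\pi_1\cong\Z\oplus\Z$. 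For $g=0$, uniformization together with the classical connectedness of $\Diff^+(S^2)$ shows the $\Diff_0(S^2)$-orbit of $J_0$ is all of $\mathcal{J}$, so $\mathrm{Stab}(J_0)=\Aut(\C P^1)=PSL(2,\C)$; since $PSL(2,\C)$ deformation retracts onto $PSU(2)=SO(3)$, we get $\Diff_0(S^2)\simeq SO(3)$ and $\pi_1\cong\Z_2$.

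The step I expect to be the genuine obstacle is the slice theorem --- that $\mathcal{J}/\Diff_0(\S)$ is Hausdorff and that $\mathcal{J}\to\mathcal{J}/\Diff_0(\S)$ is an honest fibre bundle --- which rests on the Ebin--Palais analysis of actions of diffeomorphism groups on spaces of geometric structures (properness and existence of local slices on Sobolev completions), together with the return to the $C^\infty$ category; this is where the hard analysis is concentrated. Everything else is classical Teichm\"uller theory (contractibility of $\mathcal{T}_g$ and the classification of automorphisms of Riemann surfaces of genus $0$ and $1$) or elementary algebraic topology. Since in the sequel the statement is only invoked, one could of course equally well just cite \cite{Earle-Eells} directly.
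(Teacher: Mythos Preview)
The paper does not prove this statement at all; it is stated as a classical result attributed to Earle and Eells and cited from~\cite{Earle-Eells}, with no proof or sketch given. Your proposal is a faithful outline of the original Earle--Eells argument (the fibre bundle $\Diff_0(\S)\to\mathcal{J}\to\mathcal{T}_g$ together with the identification of the stabilizers), and you even note in your final sentence that one could simply cite the reference --- which is exactly what the paper does.
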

Hence, as soon as the genus of the fiber of a mapping torus is at least two, all automorphisms are isotopic (through automorphisms) to constant ones.

\begin{remark}\label{R:Aut ain't Diff!}
It is important not to confuse the group~$\Aut(Y)$ with the group of all (orientation preserving) diffeomorphisms of~$Y$. A general diffeomorphism will not even be isotopic to a fiber preserving one!
\end{remark}

Theorem~\ref{T:Earle-Eells} has many important consequences of which we only highlight one.
\begin{corollary}\label{T:surface bundles over the sphere}
Let $P\ra S^2$ be a surface bundle with closed fibers of genus~$g$.
\begin{enumerate}
	\item If~$g=0$, then~$P$ is diffeomorphic to $S^2\times S^2$ or $\CP\#\CPbar$.
	\item If~$g=1$, then~$P$ is diffeomorphic to $T^2\times S^2$, $S^1\times S^3$ or $S^1\times L(n,1)$.
	\item If~$g\geq2$, then~$P$ is diffeomorphic to $\S_g\times S^2$	
\end{enumerate}
\end{corollary}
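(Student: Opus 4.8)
The strategy is to classify the bundles in question via the clutching construction and then to substitute the Earle--Eells computation of $\pi_1(\Diff(\S_g))$ from Theorem~\ref{T:Earle-Eells}. Write $S^2=D_+\cup_{S^1}D_-$ as a union of two hemispheres meeting along the equator. Since the hemispheres are contractible, $P$ restricts to trivial bundles $\S_g\times D_\pm$ over them, so $P$ is obtained by regluing $\S_g\times D_+$ and $\S_g\times D_-$ along $\S_g\times S^1$ by a clutching map $\varphi\colon S^1\ra\Diff^+(\S_g)$. Adjusting by a constant diffeomorphism on one hemisphere (which changes the total space only by a diffeomorphism) we may assume $\varphi$ is a based loop, hence lands in the identity component $\Diff^+_0(\S_g)$; and, as usual, the diffeomorphism type of $P$ then depends only on the class of $\varphi$ in $\pi_1(\Diff^+(\S_g),\id)$ up to the natural action of the mapping class group. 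Theorem~\ref{T:Earle-Eells} computes this group in each genus, so it remains to identify the resulting total spaces.

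If $g\geq2$, the group is trivial, so $\varphi$ is null-homotopic and $P\cong\S_g\times S^2$, which is assertion~(3). If $g=0$, then $\pi_1(\Diff^+(S^2),\id)\cong\Z_2$ and $\M(S^2)=1$ (since $\Diff^+(S^2)\simeq SO(3)$), so there are precisely two $S^2$-bundles over $S^2$: the trivial bundle $S^2\times S^2$ and a unique nontrivial one, which is classically identified with $\CP\#\CPbar$ (for instance, one distinguishes the two total spaces by the parity of their intersection forms); see~\cite{GS}. This is assertion~(1).

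The genus one case requires a bit more work. By the stronger form of the Earle--Eells theorem, $\Diff^+_0(T^2)$ deformation retracts onto its subgroup of translations, so every clutching loop is homotopic to a loop of translations $t\mapsto(at,bt)$, $t\in[0,1]$, corresponding to an element $(a,b)\in\pi_1(T^2)=\Z^2$. The mapping class group $\M(T^2)\cong SL(2,\Z)$ acts on this $\Z^2$ in the standard way, so up to isomorphism we may assume $(a,b)=(d,0)$ with $d=\gcd(a,b)\geq0$. A translation by $(dt,0)$ fixes the second circle factor of $T^2$, so the bundle with this clutching map is the product of $S^1$ with the oriented $S^1$-bundle over $S^2$ of Euler number $d$; the latter has total space $S^1\times S^2$ for $d=0$, $S^3$ for $d=1$, and $L(d,1)$ for $d\geq2$ (see~\cite{GS}). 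Hence $P$ is diffeomorphic to $T^2\times S^2$, $S^1\times S^3$, or $S^1\times L(d,1)$, which is assertion~(2).

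I expect the genus one case to be the main obstacle: one must know the full homotopy type of $\Diff^+_0(T^2)$ (rather than just $\pi_1$) to reduce clutching loops to translations, then use the $SL(2,\Z)$-action to normalise $(a,b)$ and recognise the product and lens-space structures. Identifying the nontrivial $S^2$-bundle with $\CP\#\CPbar$ in the genus zero case is the other spot where one leaves the pure clutching formalism and invokes standard $4$-manifold topology.
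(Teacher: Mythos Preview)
Your argument is correct. The paper itself does not give a proof: it simply cites \cite[Lemma~10]{Baykur-Kamada} for the genus one case and declares the remaining cases ``well known.'' What you have written is precisely the standard clutching argument that those references would unpack, and it is carried out cleanly.

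A couple of minor comments. First, your reduction in genus one relies on the stronger statement that $\Diff^+_0(T^2)$ deformation retracts onto the translation subgroup, not merely on the computation of $\pi_1$ in Theorem~\ref{T:Earle-Eells}; you flag this yourself, and the paper invokes exactly this refinement later (in the proof of Lemma~\ref{T:boundary diffeomorphisms extend}), so it is consistent with the ambient toolkit. Second, strictly speaking you have shown that the listed manifolds exhaust the possible diffeomorphism types, which is all the corollary claims; that they are pairwise distinct (visible from fundamental groups in genus one and from intersection forms in genus zero) is a pleasant bonus but not required. Your closing paragraph of self-assessment is accurate and could simply be deleted in a final version.
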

\begin{proof}
For the genus one case see~\cite[Lemma~10]{Baykur-Kamada}. The other cases are well known.
\end{proof}

\section{\Swfs over general base surfaces}
	\label{S:SWFs over general base}
We are finally ready to introduce the main objects of study in this paper.

\begin{definition}\label{D:simple wrinkled fibrations}
Let $X$ be a 4-manifold and $B$ a surface, both oriented. A \emph{\swf} with \emph{total space}~$X$ and \emph{base}~$B$ is a surjective smooth map of pairs $w\colon (X,\del X)\ra (B,\del B)$ with the following properties:
\begin{enumerate}
	\item $w$ is a wrinkled fibration, i.e.~$\Crit_{w}$ contains only indefinite folds and cusps,
	\item $\Crit_{w}\cap\del X=\emptyset$,
	\item $\Crit_{w}$ is non-empty, connected, and contains a cusp,
	\item $w$ is injective on $\Crit_{w}$ and
	\item all fibers of $w$ are connected.
\end{enumerate}
Two \swfs $w\colon X\ra B$ and $w'\colon X'\ra B'$ are \emph{equivalent} if 
there are orientation preserving diffeomorphisms~$\hat{\phi}\colon X\ra X'$ and~$\check{\phi}\colon B\ra B'$ such that~$w'\circ\hat{\phi}=\check{\phi}\circ w$.
\end{definition}

Since we assume the base and total space of a \swf to be oriented, the regular fibers are closed, oriented surfaces (of varying genus as explained below). We can thus define the \emph{genus} of~$w$ as the maximal genus among all regular fibers.
A \nbhd of the critical image of a \swf is shown in Figure~\ref{F:SWF base}
	\begin{figure}[h]
	\includegraphics{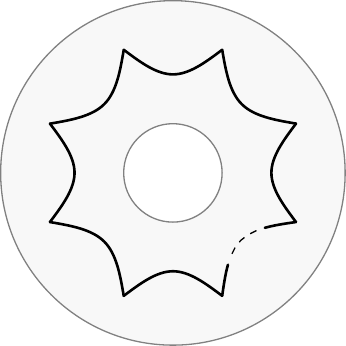}
	\caption{A \nbhd of the critical image of a \swf.}
	\label{F:SWF base}
	\end{figure}

\smallskip
Before we continue we make some remarks about the definition.
\begin{remark}\label{R:difference from SPWFs}
\Swfs over $S^2$ are essentially the same as Williams' simplified purely wrinkled fibrations with two minor differences. One one hand we do not put restrictions on the fiber genus but on the other we require the presence of cusps. 
Both conditions can always be achieved by applying 
a \emph{flip-and-slip move} (see Remark~\ref{R:flip+slip} below)
and are thus merely of technical nature. Moreover, the ``\swfs without cusps'' are easily classified (see Example~\ref{eg:ADK sphere}) so that one does not lose too much by ignoring them.
\end{remark}

\begin{remark}\label{R:flip+slip}
Given a \swf over~$S^2$ there is an important homotopy to another such \swf which has become known as a \emph{flip-and-slip move}. Its effect on the base diagram is shown in Figure~\ref{F:flip+slip}. 
	\begin{figure}
	\includegraphics{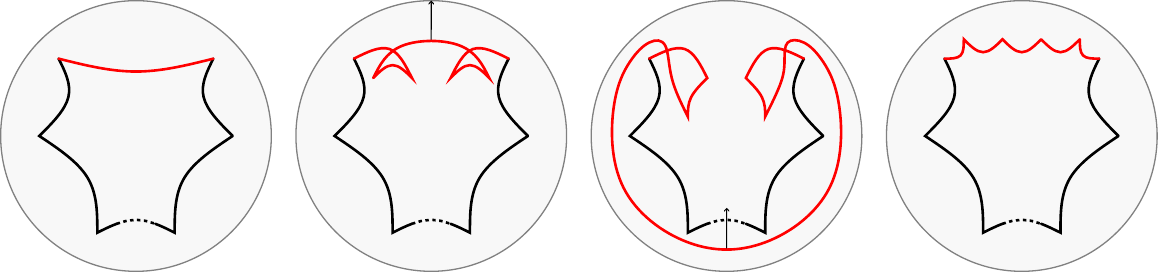}
	\caption{The base diagrams during a flip-and-slip move. (The pictures show the complement of a disk in the lower genus region of the original fibration.)}
	\label{F:flip+slip}
	\end{figure}
One first perform two flips on the same fold arc and then chooses an isotopy of the total space (the \emph{slip}) during which the critical image undergoes the changes demonstrated in the picture.
A flip-and-slip increases the fiber genus by one and introduces four new cusps.
\end{remark}

\begin{remark}\label{R:SWFs are simple}
In spite of the lengthy definition, \swfs are arguably the simplest possible maps from 4-manifolds to surfaces, at least as far as their singularity structure is concerned. As will be explained in detail it is this simplicity which makes it possible to give nice combinatorial descriptions of 4-manifolds.
\end{remark}

\begin{remark}\label{R:homotopy vs. equivalence}
So far \swfs have usually been studied up to homotopy instead of equivalence. 
However, we believe that the former point of view does not interact well with surface diagrams (which will be introduced momentarily) while the latter fits in perfectly.
It would be interesting to relate the concepts of homotopy and equivalence but to our knowledge there is no obvious way to do so.
\end{remark}

Given the rather specialized nature of \swfs one might wonder whether they actually exist. This is indeed the case and we begin by giving some simple constructions.
\begin{example}[Surface bundles]\label{eg:birth on bundles}
Let $\pi\colon X\ra B$ be a surface bundle over a surface~$B$ with closed fibers of genus~$g$. Then we can perform a birth homotopy on~$\pi$ to obtain a genus~$g+1$ \swf with two cusps. 
\end{example}
\begin{example}[Lefschetz fibrations]\label{eg:wrinkled Lefschetz fibrations}
If $f\colon X\ra B$ is a Lefschetz fibration (possibly achiral) with closed fibers of genus~$g$, then after wrinkling all the Lefschetz singularities we obtain a number of disjoint circles with three cusps in the critical image. 
By suitably merging cusps we can turn this configuration into a single circle resulting in a \swf of genus~$g+1$. 
\end{example}

\begin{example}[The case without cusps]\label{eg:ADK sphere}
This example includes the broken Lefschetz fibration on~$S^4$ from~\cite{ADK} that was mentioned in the introduction.
Let~$\Omega$ be a cobordism from~$\S_g$ to~$\S_{g-1}$ together with a Morse function~$\mu\colon \Omega\ra I$ with exactly one critical point of index two. Then~$\mu\times\id \colon \Omega\times S^1 \ra I\times S^1$ is a stable map with one circle of indefinite folds which fails to be a \swf only because it does not have any cusps. 
Nevertheless, we can use~$\Omega\times S^1$ to build wrinkled fibrations over~$S^2$ by suitably filling in the two boundary components with~$\S_g\times D^2$ and~$\S_{g-1}\times D^2$ such that the fibration structures on the boundary extends. 
Using the handle decomposition constructed in~\cite{Baykur2} it is easy to see that this constructions one gives the following total spaces: $P\# S^1\times S^3$ where~$P$ is any $\S_{g-1}$-bundle over~$S^2$ and, if~$g=1$, $S^4$~and some other manifolds with finite cyclic fundamental group (see~\cites{Baykur-Kamada,Hayano1}). 
Having build these maps one can then apply a flip-and-slip to obtain honest \swfs. In particular, we see that~$S^4$ carries a \swf of genus two.

As a side remark, the above mentioned genus one fibration on~$S^4$ already appeared in~\cite{ADK} and is probably the reason why people became interested in constructing broken fibrations on general 4-manifolds.
\end{example}

The above examples show that \swfs can be considered as a common generalization of surface bundles and (achiral) Lefschetz fibrations. 
The vastness of this generalization is indicated by the following remarkable theorem.
\begin{theorem}[Williams \cite{Williams1}]\label{T:Williams existence}
Let $X$ be a closed, oriented 4-manifold. Then any map $X\ra S^2$ is homotopic to a \swf of arbitrarily high genus.
\end{theorem}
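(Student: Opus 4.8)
The plan is to reduce the statement to known existence results for broken Lefschetz fibrations and to the local homotopy moves recalled in Section~\ref{S:singularity theory}. First I would recall that by~\cite{Gay-Kirby} (combined with Lekili's observation~\cite{Lekili} that the local moves work with achiral singularities) every smooth map $X\ra S^2$ is homotopic to a broken Lefschetz fibration, and in fact one may arrange the base diagram to consist of a single embedded circle separating $S^2$ into a lower-genus and a higher-genus region, with all Lefschetz critical values lying in the higher-genus region. This is the standard ``simplified'' form. Next I would apply Lekili's \emph{wrinkling} move (Figure~\ref{F:Lefschetz vs cusp}(a)) to each Lefschetz singularity, replacing it by an indefinite triangle of folds and cusps; this produces a wrinkled fibration whose critical image is the original fold circle together with a disjoint collection of small indefinite triangles.

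The second step is to make the critical locus connected and to guarantee the presence of a cusp, i.e.~to massage the wrinkled fibration into a \swf in the sense of Definition~\ref{D:simple wrinkled fibrations}. Here I would use the \emph{merge} move (Figure~\ref{F:basic homotopies}(b)): merging folds lets one connect the small triangles to each other and to the main circle one at a time, while merging cusps removes superfluous cusps, so that after finitely many such moves the critical image is a single connected curve containing at least one cusp. (If no Lefschetz singularities were present to begin with, so that the ``wrinkled fibration'' produced above has no cusps at all, one can create a cusp directly: perform a \emph{flip} (Figure~\ref{F:basic homotopies}(c)) on the fold circle to introduce cusps, or invoke the flip-and-slip move of Remark~\ref{R:flip+slip}.) One must also ensure injectivity on the critical locus and connectedness of all fibers; connectedness of fibers over the genus regions holds because the fiber genus is at least one after wrinkling, and injectivity on cusps and folds can be arranged by a further small generic perturbation using Theorem~\ref{T:generic maps to surfaces}. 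Finally, to reach \emph{arbitrarily} high genus one applies the flip-and-slip move repeatedly: by Remark~\ref{R:flip+slip} each application raises the fiber genus by one while preserving the \swf structure, so iterating gives \swfs of every genus $\geq g_0$ in the homotopy class of the original map.

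The main obstacle, and the point requiring genuine care rather than bookkeeping, is controlling the \emph{homotopy class}: all of the moves above (wrinkling, merging, flips, flip-and-slip) must be realized by homotopies of maps $X\ra S^2$, and one needs that the starting broken Lefschetz fibration lies in the prescribed homotopy class. For maps to $S^2$ the homotopy class is detected by $\pi_2(S^2)$-data together with a spin-type invariant, and Gay--Kirby's existence theorem is proved for a fixed map up to homotopy; the wrinkling and merge moves are supported in balls and hence do not change the homotopy class, so this part goes through, but it is where the argument's substance lies and where one must be precise about which version of the existence theorem one cites. A secondary technical point is verifying that after all the moves the resulting map genuinely satisfies every clause of Definition~\ref{D:simple wrinkled fibrations} simultaneously (connected critical locus \emph{and} a cusp \emph{and} injectivity on critical points \emph{and} connected fibers), which is why the flip-and-slip move is convenient: it produces a clean model to which the genus-raising induction can be applied uniformly.
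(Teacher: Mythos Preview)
The paper does not give its own proof of this theorem; it is stated with attribution to Williams~\cite{Williams1}, and the only commentary is Remark~\ref{R:existence proof}, which notes that Williams' argument builds on the Gay--Kirby existence results~\cite{Gay-Kirby} (and hence, indirectly, on deep contact topology). Your proposal follows exactly this route---start from a broken Lefschetz fibration in the given homotopy class, wrinkle the Lefschetz points via Lekili's move, and then use the local homotopies of Section~\ref{S:singularity theory} together with flip-and-slip to reach a \swf of arbitrarily high genus---so it is in the same spirit as what the paper attributes to Williams, and there is nothing in the paper to compare it against beyond that remark.

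That said, your sketch is optimistic at the middle step. You assume the broken Lefschetz fibration can be taken in \emph{simplified} form (a single embedded fold circle with all Lefschetz points on the higher-genus side); this is true, but it is not an immediate consequence of~\cite{Gay-Kirby} and is in fact closely intertwined with the very result you are proving. Likewise, the sentence ``merging folds lets one connect the small triangles to each other and to the main circle one at a time'' hides real content: the moves of Figure~\ref{F:basic homotopies} are not freely reversible (Remark~\ref{R:reversing homotopy moves}), and arranging the critical image into a single connected, embedded curve while keeping all singularities indefinite and all fibers connected is precisely the substantive part of Williams' argument in~\cite{Williams1}. Your proposal correctly identifies the ingredients and the final genus-raising step, but it does not engage with this difficulty; that is acceptable for a sketch of a cited theorem, but it would not stand as an independent proof.
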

\begin{remark}\label{R:existence proof}
Williams' proof builds on results of Gay and Kirby~\cite{Gay-Kirby} which, in turn, depend on deep theorems in 3-dimensional contact topology\footnote{Eliashberg's classification of overtwisted contact structures and the Giroux correspondence between contact structures and open book decompositions}. 
This somewhat unnatural dependence could be removed by refining the singularity theory based approach of~\cite{Baykur1} to produce maps which are injective on their critical points.
\end{remark}

Williams~\cite{Williams1} also introduced a combinatorial description of \swfs over~$S^2$ in terms of what he calls \emph{surface diagrams}. In the remainder of this section we will generalize his construction to the setting of general base surfaces and prove a precise correspondence. Along the way we will see how \swfs give rise to handle decompositions. In Section~\ref{S:SWFs over disk and sphere} we will return to Williams' surface diagrams and use them to prove some results. 

\medskip
Let $w\colon X\ra B$ be a \swf. 
As explained in Section~\ref{S:singularity theory}, it follows from the definition of \swfs that the critical locus~$\Crit_w\subset X$ of a \swf~$w\colon X\ra B$ is a smoothly embedded circle and that $w$~restricts to a topological embedding of~$\Crit_w$ into~$B$. Furthermore, the critical image~$w(\Crit_w)$ separates~$B$ into two components. Indeed, if its complement were connected, then all regular fibers would be diffeomorphic. But according to the fold model, the topology of the fibers on the two sides of a fold arc must be different. In fact, since we require that all fibers are connected, the genus on one side has to be one higher than on the other side. We will call the two components of~$B\setminus w(\Crit_w)$ the \emph{higher} (resp. \emph{lower}) \emph{genus region}. 

We would like to understand more precisely how the topology of the fibers changes across the critical image. 
A \emph{reference path} for~$w$ is an oriented, embedded arc~$R\subset B$ that connects a point~$p_+$ in the higher genus region to a point~$p_-$ in the lower genus region and intersects~$w(\Crit_w)$ transversely in exactly one fold point. Then the \emph{reference fibers}~$\S_\pm(R):=w\inv(p_\pm)$ over the \emph{reference points}~$p_\pm$ are closed, oriented surfaces. 
\begin{lemma}\label{T:fold vanishing cycles}
A reference path $R\subset B$ induces a nonseparating \scc $\g(R)\subset\S_+(R)$ which depends only on the isotopy class of $R$ relative to its reference points and the cusps.
\end{lemma}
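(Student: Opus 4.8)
The plan is to extract $\g(R)$ from the way the fibres of $w$ degenerate along $R$ near the single fold point at which $R$ meets the critical image, and then to analyse how this construction depends on $R$. First I would localise. Parametrise $R$ as $R\colon[0,1]\ra B$ with $R(0)=p_+$ and $R(1)=p_-$, and let $t_0\in(0,1)$ be the unique parameter with $q:=R(t_0)\in w(\Crit_w)$; by hypothesis $q$ is a fold point and $R$ is transverse to $w(\Crit_w)$ there. Transversality makes $w\inv(R)$ a smooth $3$-manifold near $q$, and $w$ restricts to a map $w\inv(R)\ra R\cong[0,1]$ which, by the indefinite fold model~\eqref{E:fold model}, is a Morse function with a single critical point, of index two when $R$ is oriented (as it is) from the higher to the lower genus region.

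Crossing its critical value therefore changes the fibre by a surgery along an embedded circle, and because all fibres of $w$ are connected (Definition~\ref{D:simple wrinkled fibrations}(5)) this surgery cannot disconnect the fibre; hence it lowers the genus by exactly one and the circle is nonseparating. Standard Morse theory identifies the circle, up to isotopy in the fibre, with the attaching circle of the corresponding $2$-handle, which lies in the fibre $F_-:=w\inv\big(R(t_0-\eps)\big)$ over a point of $R$ just on the higher genus side of $q$; call it $\g_0\subset F_-$. Over the subinterval $R([0,t_0-\eps])$ there are no critical values, so $w$ is a surface bundle there, and parallel transport along $R$ gives a diffeomorphism $\S_+(R)=w\inv(p_+)\ra F_-$ that is canonical up to isotopy (any two such identifications of the two end fibres of a monodromy-free bundle over an interval are isotopic). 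I define $\g(R)\subset\S_+(R)$ to be the preimage of $\g_0$ under this diffeomorphism; by the above it is a well-defined isotopy class of nonseparating \scc.

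For the dependence statement I would argue in two steps. An isotopy of $R$ through reference paths relative to $p_\pm$ produces continuous families of the data $F_-$, $\g_0$ and the parallel transport above, hence a continuous --- and therefore constant --- family of isotopy classes $[\g(R)]$. A general isotopy of $R$ relative to its reference points and the cusps ceases to be of this type only at finitely many parameters, where $R$ develops a simple tangency to one fold arc and thereby gains or loses two transverse intersections with it; from the surgery description one checks that such a move leaves $[\g(R)]$ unchanged, the two extra crossings amounting to a surgery immediately followed by its inverse, which does not interact with the $2$-handle over $q$. Together these give the claim, and they also show why ``relative to the cusps'' is indispensable: pushing $R$ across a cusp transfers the essential crossing to the adjacent fold arc, whose vanishing cycle is a genuinely different curve --- one meeting $\g(R)$ transversely in a single point, as the cusp model records.

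I expect the crux to be this last step: in particular the verification that the tangency move does not change $[\g(R)]$ and, underlying everything, the classical fact that the vanishing cycle of an index-two critical point is well defined up to isotopy independently of the auxiliary gradient-like vector field used to produce it. The localisation and transport steps are routine, and the overall argument runs parallel to the familiar identification of the vanishing cycles of a Lefschetz fibration with isotopy classes of vanishing paths.
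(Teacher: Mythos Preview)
Your argument is correct and takes the same route as the paper: view $w\inv(R)$ as a cobordism from $\S_+(R)$ to $\S_-(R)$ on which $w$ restricts to a Morse function with a single index-two critical point, and take $\g(R)$ to be the attaching circle of the resulting $2$-handle. The paper's own proof is much terser --- it records only this construction and leaves both the nonseparating claim and the isotopy invariance implicit --- so your version is more thorough rather than different in substance.
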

\begin{definition}\label{D:fold vanishing cycle}
The curve $\g(R)\subset\S_+(R)$ is called the \emph{(fold) vanishing cycle} associated to~$R$.
\end{definition}
\begin{proof}
The fold model implies that $w\inv(R)$ is a cobordism from $\S_+(R)$ to $\S_-(R)$ on which $w$~restricts to a Morse function with exactly one critical point of index~2. Thus $w\inv(R)$ is diffeomorphic to $\S_+(R)\times[0,1]$ with a (3-dimensional) 2-handle attached along a simple closed curve in $\S_+(R)\times\set{1}$ which is canonically identified with a \scc $\g(R)\subset\S_+(R)$.
\end{proof}

Next, let us look at what happens around the cusp. Let $R_1$~and~$R_2$ be two reference paths for~$w$ with common reference points and assume that their interiors are disjoint. We call~$R_1$ and~$R_2$ \emph{adjacent} if their union~$R_1\cup R_2$ bounds a disk in~$B$ that contains exactly one cusp.
\begin{lemma}\label{T:adjacent reference paths}
Let $R_1$ and $R_2$ be adjacent reference paths. Then the vanishing cycles $\g(R_1)$ and $\g(R_2)$ in $\S_+:=\S_+(R_1)=\S_+(R_2)$ are geometrically dual.
\end{lemma}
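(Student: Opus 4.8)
The plan is to localise the statement near the given cusp and then to read $\g(R_1)$ and $\g(R_2)$ off the cusp normal form, where they will appear as the ascending and descending spheres of a cancelling pair of critical points lying in a common intermediate level set. By Lemma~\ref{T:fold vanishing cycles} the curves $\g(R_1)$ and $\g(R_2)$ depend only on the isotopy classes of $R_1,R_2$ relative to their reference points and the cusps, so I would first isotope $R_1$ and $R_2$ so that they coincide outside a small disk $U\subset B$ around the given cusp~$c$ (containing no other cusp) and, inside $U$, are two arcs with common endpoints on $\del U$, each meeting one of the two fold arcs $F_1,F_2$ emanating from~$c$ in a single transverse point and together bounding a bigon enclosing only~$c$. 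The common exterior parts of the paths cover product pieces of $w$; transporting $\g(R_i)$ through them reduces the computation of the geometric intersection number to the closed fibre over a point $e\in\del U$ in the higher genus region, where both curves are supported inside the cusp chart and $w$ agrees with the model $w_0(t,x,y,z)=(t,\phi_t(x,y,z))$, $\phi_t=-x^3+3tx-y^2+z^2$.

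Next I would set up the Morse--Cerf picture of the model. For $t>0$ the function $\phi_t$ has exactly two critical points, $P_1=(-\sqrt t,0,0)$ of index~$1$ and $P_2=(\sqrt t,0,0)$ of index~$2$, with $\phi_t(P_1)<\phi_t(P_2)$, and they collide as $t\to0^+$; thus the cusp realises the death of this cancelling pair, and $F_1,F_2$ are the images of the critical loci of $P_1$ and $P_2$. Inspecting how a regular fibre changes on crossing a fold arc — a band is attached on the index-$1$ side, a curve is surgered on the index-$2$ side — one checks that the higher genus region is the cuspidal region between $F_1$ and $F_2$; hence $e$ lies over a value $c$ with $\phi_t(P_1)<c<\phi_t(P_2)$, and $\S_+$ is the level set $\{\phi_t=c\}$, which separates $P_1$ from $P_2$.

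Then I would identify the two vanishing cycles. Applying Lemma~\ref{T:fold vanishing cycles} to $R_2$, which crosses the index-$2$ fold $F_2$: on $w\inv(R_2)$ the map $w$ is a Morse function whose unique critical point $P_2$ has index~$2$, so $\g(R_2)$ is the attaching circle of that $2$-handle, i.e.\ the descending sphere of $P_2$ read in the level set $\S_+$. Applying the lemma to $R_1$, which crosses the index-$1$ fold $F_1$ from the higher to the lower genus side: here the relevant Morse function on $w\inv(R_1)$ is $-\phi_t$, for which $P_1$ has index~$2$, so $\g(R_1)$ is the descending sphere of $P_1$ for $-\phi_t$, that is, the ascending sphere of $P_1$ in $\S_+$. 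Thus $\g(R_1)$ and $\g(R_2)$ are, respectively, the ascending sphere of $P_1$ and the descending sphere of $P_2$ in a level set separating the two; that these meet transversely in a single point is exactly the content of the cancellation of the pair $(P_1,P_2)$, and it can also be checked by hand in the model, where the descending disk of $P_2$ is $\{z=0\}$, the ascending disk of $P_1$ is $\{y=0\}$, both circles lie over the $x$-axis, and of the three points in which the $x$-axis meets $\{\phi_t=c\}$ only the middle one (between $P_1$ and $P_2$) lies on both. Since $\g(R_1)$ and $\g(R_2)$ are non-separating by Lemma~\ref{T:fold vanishing cycles} and a configuration with a single transverse intersection point admits no bigon, these representatives are already in minimal position, so $i\big(\g(R_1),\g(R_2)\big)=1$ and the vanishing cycles are geometrically dual.

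I expect the main obstacle to be the bookkeeping in the second step: correctly identifying which of the two fold arcs is the index-$1$ and which the index-$2$ fold, and on which side the higher genus fibre lies, so that $\g(R_1)$ and $\g(R_2)$ really become the ascending and descending spheres of one cancelling pair rather than, say, two descending spheres of unrelated critical points. Keeping the index and orientation conventions consistent with Lemma~\ref{T:fold vanishing cycles} is the step that makes the clean ``single intersection point'' conclusion available.
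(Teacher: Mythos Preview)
Your proof is correct and follows essentially the same idea as the paper: both arguments identify $\g(R_1)$ and $\g(R_2)$ with the belt sphere of a 1-handle and the attaching sphere of a 2-handle (equivalently, the ascending sphere of the index-1 point and the descending sphere of the index-2 point) in the cancelling pair modeled by the cusp, and then invoke the single transverse intersection that characterises cancellation. The paper stays at the level of cobordisms and handle attachments, reversing~$R_1$ to turn its 2-handle into a 1-handle and reading off belt/attaching spheres, whereas you unpack the same picture explicitly in the coordinates of the cusp model and add the bigon-criterion remark to pin down the \emph{geometric} intersection number; this extra care is a nice touch but not a different argument.
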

\begin{proof}
As in the proof of Lemma~\ref{T:fold vanishing cycles} the preimages~$w\inv(R_i)$, $i=1,2$, are both cobordisms from~$\S_+$ to~$\S_-$, each consisting of a 2-handle attachment along $\g(R_i)$.
By reversing the orientation of~$R_1$ we can consider~$w\inv(R_1)$ as a cobordism from~$\S_-$ to~$\S_+$, now consisting of a 1-handle attachment. In this process the former attaching sphere of the 2-handle~$\g(R_1)$ becomes the belt sphere of the 1-handle. 

Gluing~$w\inv(R_1)$ and~$w\inv(R_2)$ together along~$\S_+$ gives a cobordism from~$\S_-$ to itself consisting of a 1-handle attachment followed by a 2-handle attachment. 
Now recall that the cusp singularity models the death (or birth) of a canceling pair of critical points. Hence, the attaching sphere of the 2-handle, which is~$\g(R_2)$, intersects the belt sphere of the 1-handle, which is~$\g(R_1)$, in a single point.
\end{proof}

Looking a bit ahead, our strategy will be to choose suitable collections of reference paths and to study \swfs in terms of the induced collection of vanishing cycles. The only obstacle for doing so is the possibly complicated topology of the base surface. But this can easily be overcome by the following observation. We can cut the base into three pieces
	\begin{equation*}
	B=B_+\cup A\cup B_-
	\end{equation*}
where $A$ is a regular \nbhd of the critical image of~$w$ (diffeomorphic to an annulus) and~$B_\pm$ are the closures of the complement of~$A$. The subscript in~$B_\pm$ indicates whether the surface is contained in the higher or lower genus region. Note that~$w$ restricts to surface bundles over $B_\pm$ and, although complicated, these are a rather well studied class of objects. Thus the interesting new part of~$w$ is the restriction $w\inv(A)\ra A$ which is a \swf over an annulus. Moreover, this fibration has the property that the critical image does not bound a disk in~$A$ or, in other words, it is boundary parallel.
\begin{definition}\label{D:annulas SWFs}
A \swf $w\colon W\ra A$ over an annulus $A$ is called \emph{annular} if its critical image is boundary parallel.
\end{definition}
So in order to understand \swfs over any base surface, it is enough to understand annular \swfs and this is where surface diagrams enter the picture. 
The remainder of this section is devoted to proving the following theorem. 
\begin{theorem}\label{T:annular SWFs <-> twisted SDs}
There is a bijective correspondence between annular \swfs up to equivalence and twisted surface diagrams up to equivalence
\end{theorem}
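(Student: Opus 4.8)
The plan is to define assignments in both directions and verify that they are mutually inverse on equivalence classes. For the forward direction, let $w\colon W\ra A$ be an annular \swf. Its critical image is a circle $C\subset A$ parallel to the boundary, carrying cusps $p_1,\dots,p_n$ in cyclic order and separating $A$ into a higher genus and a lower genus annular region. Fix reference points $p_\pm$, one in each region, and a cyclic family of reference paths $R_1,\dots,R_n$ with $R_i$ crossing the fold arc between $p_i$ and $p_{i+1}$ and with $R_i$ and $R_{i+1}$ (indices mod $n$) adjacent; also fix an identification of each reference fiber $\S_+(R_i)$ with a model surface $\S$ by arcs in the higher genus region. Lemma~\ref{T:fold vanishing cycles} produces nonseparating curves $c_i:=\g(R_i)\subset\S$, and Lemma~\ref{T:adjacent reference paths} shows that cyclically consecutive $c_i$ are geometrically dual. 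The remaining, ``twisting'', datum records the behavior of $w$ over the two complementary annuli $A\setminus\nu C$: cutting $A$ along an arc dual to $C$ turns $w$ into a \swf over a disk, and the fiber preserving diffeomorphism needed to reglue --- equivalently the monodromies of the surface bundles $w|_{A\setminus\nu C}$ --- is recorded up to isotopy. One then checks that changing the reference paths, the auxiliary identifications, or the cutting arc alters $(\S,(c_i),\text{twist})$ only by the moves generating equivalence of twisted surface diagrams, so that its equivalence class is well defined.

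For the inverse direction, reconstruct $W$ from a twisted surface diagram. First build the germ of the fibration near the critical circle out of the curve data: over each fold arc use the fold normal form~\eqref{E:fold model} to produce the cobordism of Lemma~\ref{T:fold vanishing cycles}, namely $\S\times[0,1]$ with a 2-handle attached along $c_i$; over each cusp use the cusp normal form~\eqref{E:cuspmodel}; and glue these pieces cyclically around a collar of a circle, invoking the change of coordinates principle (Proposition~\ref{T:change of coordinates principle}) to put each adjacent pair $c_i$, $c_{i+1}$ into the standard geometrically dual position in which the cusp model applies. This produces an annular \swf over a collar of a circle whose vanishing cycles are exactly the $c_i$, and whose two boundary components are surface bundles over $S^1$ with fibers $\S$ and the surgered surface. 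Finally attach product bundles over the two remaining annuli, gluing along the prescribed twisting datum. Connectedness of the fibers and the presence of at least one cusp hold by construction, and independence of the auxiliary choices is verified as before.

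That the two assignments are mutually inverse rests on a uniqueness statement: a regular neighborhood of a cusped indefinite fold circle inside a \swf is determined up to fiber preserving diffeomorphism by its system of vanishing cycles. Granting this, passing from a diagram to a \swf and back returns the same curves --- they are by construction the vanishing cycles of the model fold pieces --- and the same twist, while passing from a \swf to a diagram and back identifies the germ of $w$ near $C$ with the model built from its curves and then identifies the complementary bundles via the recorded twist. The uniqueness statement follows by putting the fold and cusp pieces of an arbitrary such neighborhood into normal form one at a time and patching the resulting isotopies using isotopy extension; the handle decomposition of Section~\ref{S:handle decompositions} supplies the concrete model in which to run this argument.

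The main obstacle is the bookkeeping of gluing ambiguities: one must pin down exactly which fiber preserving identifications are available when reassembling the germ from fold and cusp pieces and when attaching the complementary surface bundles, and then show that the resulting indeterminacy is absorbed precisely by the equivalence relation on twisted surface diagrams --- this is the content of the word ``twisted'' and the subject of the analysis of gluing ambiguities promised for the end of this section. One must also rule out hidden moduli in the local cusped fold circle model beyond its vanishing cycles, for which the rigidity of the normal forms~\eqref{E:fold model} and~\eqref{E:cuspmodel} and, on the boundary bundles, the Earle--Eells theorem (Theorem~\ref{T:Earle-Eells}) controlling $\pi_1(\Diff(\S))$ are the essential inputs.
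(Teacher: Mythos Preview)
Your outline takes a genuinely different route from the paper's. The paper does not assemble the annular \swf from local fold and cusp models; instead it uses the handle decomposition of Proposition~\ref{T:handle decompositions from SWFs} as the bridge. For the reconstruction (Proposition~\ref{T:annular SWFs <- twisted SDs}) the paper starts directly with the mapping torus of the switch, $W_1=\Sigma(\mu)\times[0,\tfrac13]$, attaches a fiber-framed 2-handle along each~$\gamma_i$ (parallel-transported into the appropriate fiber of~$\Sigma(\mu)$) to produce arcs of indefinite folds, and then runs the standard Morse cancellation homotopy---available precisely because adjacent~$\gamma_i$ are geometrically dual and~$\mu$ is a switch---to close each pair of fold arcs into a cusp. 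This makes the role of the twist transparent: it is the higher monodromy~$\mu_+$, built in at step one, and nothing further needs to be glued on.

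Your proposal has a real gap at exactly this point. In the forward direction you describe the ``twisting datum'' as the regluing diffeomorphism after cutting~$A$ along a transverse arc, ``equivalently the monodromies of the surface bundles $w|_{A\setminus\nu C}$''. But the paper's twisted surface diagram (Definition~\ref{D:surface diagrams}) records only the higher monodromy~$\mu_+$, acting as the switch; the lower monodromy is determined by~$\Gamma$ and~$\mu_+$ and is not part of the data. More seriously, in your reconstruction the switch never enters where it matters. If you cyclically glue local fold and cusp pieces into a germ over a collar of~$C$, the higher-genus boundary of that germ already carries some monodromy~$\mu_0$ fixed by your gluing choices; subsequently ``attaching product bundles over the two remaining annuli, gluing along the prescribed twisting datum'' cannot alter that monodromy, since gluing a trivial $\Sigma$-bundle over an annulus to a $\Sigma$-bundle over~$S^1$ yields an equivalent bundle regardless of the fiber-preserving identification used. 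Your construction therefore produces an annular \swf with higher monodromy~$\mu_0$ rather than the prescribed~$\mu$. The fix is to feed~$\mu$ into the cyclic gluing itself (identify the last piece back to the first via~$\mu$), which is precisely what the paper's choice of~$\Sigma(\mu)$ in step one accomplishes more cleanly.

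Finally, the appeal to the Earle--Eells theorem is a red herring here. In the paper Theorem~\ref{T:Earle-Eells} is invoked only later (Lemma~\ref{T:boundary diffeomorphisms extend}), to control gluing ambiguities when one caps off the higher-genus boundary with~$\Sigma\times D^2$; for the annular correspondence itself the equivalence relation on twisted surface diagrams is \emph{defined} (Definition~\ref{D:equivalence of SDs}) so as to absorb exactly the indeterminacy coming from the choice of reference system, and no rigidity input about~$\pi_1(\Diff(\Sigma))$ is needed.
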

We will split the proof of the theorem into the two obvious parts. The first part is the subject of Section~\ref{S:SWFs -> SDs} (see Proposition~\ref{T:annular SWFs -> twisted SDs}) and the second is treated in Section~\ref{S:SDs -> SWFs} (see Proposition~\ref{T:annular SWFs <- twisted SDs}). Along the way, we will see in Section~\ref{S:handle decompositions} that, just as Lefschetz fibrations, annular \swfs are directly accessible via handlebody theory.

\begin{remark}\label{R:Gay-Kirby reconstruction}
Recently Gay and Kirby have published a result that contains Theorem~\ref{T:annular SWFs <-> twisted SDs} as a special case~\cite{GK3}. Although their methods are somewhat similar to ours we feel that our approach is of independent interest.
\end{remark}

\subsection{Twisted surface diagrams of annular \swfs}
	\label{S:SWFs -> SDs}
Consider an annular \swf~$w\colon W\ra A$. We denote by $\delpm A$ the boundary components of the base annulus~$A$ contained in the higher (resp.\ lower) genus region and we let~$\delpm W=w\inv(\delpm A)$.

\begin{definition}\label{D:reference system}
Let $w\colon W\ra A$ be an annular \swf. A \emph{reference system}~$\mcR=\set{R_1,\dots,R_c}$ for~$w$ (where $c$~is the number of cusps) is a collection of reference paths for~$w$ such that
\begin{enumerate}
	\item all reference paths have the same reference points $p_\pm\in\delpm A$,
	\item the interiors of the arcs are pairwise disjoint,
	\item with respect to the orientations on~$\delpm A$ the arcs leave~$\delp A$ and enter~$\delm A$ in order of increasing index (see Figure~\ref{F:reference system}) and
	\item each fold arc is hit by exactly one of the $R_i$.
\end{enumerate}
\end{definition}

	\begin{figure}[h]
	\includegraphics{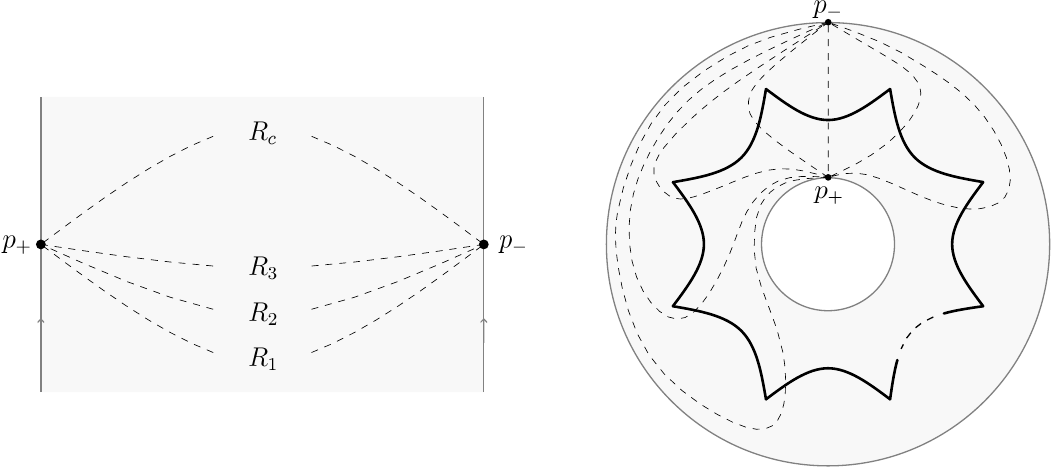}
	\caption{A reference system for an annular \swf.}
	\label{F:reference system}
	\end{figure}

As before, we denote the reference fibers by $\S_\pm:=\S_\pm(\mcR)=w\inv(p_\pm)$. Using the reference fibers we can write $\delpm W$ as mapping tori
	\begin{equation*}
	\delpm W\cong \S_\pm(\mu_\pm)
	\end{equation*}
where $\mu_\pm\in\M(\S_\pm)$ is the monodromy of $w$ over $\delpm A$ (in the positive direction). We will refer to $\mu_+$ (resp.~$\mu_-$) as the \emph{higher} (resp.\ \emph{lower}) \emph{monodromy} of~$w$.

\begin{lemma}\label{T:reference systems -> circuits}
Let $w\colon W\ra A$ be an annular \swf together with a reference system~$\mcR=\set{R_1,\dots,R_c}$ and let $\g_i=\g(R_i)\subset\S_+$. Then for $i<c$ the vanishing cycles $\g_i$~and~$\g_{i+1}$ are geometrically dual and, moreover, so are $\mu_+(\g_c)$~and~$\g_1$.
\end{lemma}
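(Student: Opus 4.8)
The plan is to combine Lemma~\ref{T:adjacent reference paths} with a careful analysis of how the monodromy~$\mu_+$ interacts with vanishing cycles across the ``last'' fold arc. First I would observe that for $i<c$ the statement is immediate: by condition~(3) in the definition of a reference system, consecutive arcs~$R_i$ and~$R_{i+1}$ have disjoint interiors, common reference points, and leave~$\delp A$ and enter~$\delm A$ in adjacent positions, so their union~$R_i\cup R_{i+1}$ cobounds a disk in~$A$; by condition~(4) each~$R_j$ hits exactly one fold arc, and the disk cut off by~$R_i\cup R_{i+1}$ contains exactly one cusp (the one between the two fold arcs they hit). Hence~$R_i$ and~$R_{i+1}$ are adjacent in the sense preceding Lemma~\ref{T:adjacent reference paths}, and that lemma gives that $\g_i$ and~$\g_{i+1}$ are geometrically dual.

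The remaining assertion, that $\mu_+(\g_c)$ and~$\g_1$ are geometrically dual, requires closing up the ``cyclic gap'' between~$R_c$ and~$R_1$. The idea is to produce a new reference path~$R_0$ that plays the role of~$R_1$ but sits on the other side, so that~$R_c$ and~$R_0$ are adjacent and~$R_0$ is related to~$R_1$ by pushing once around the annulus. Concretely: take~$R_1$ and drag its portion over~$\delp A$ all the way around~$\delp A$ in the negative direction (and similarly its endpoint on~$\delm A$ around~$\delm A$), keeping it transverse to the critical image and hitting the same fold arc; call the result~$R_0$. Because~$\delp W\cong\S_+(\mu_+)$ with~$\mu_+$ the monodromy in the positive direction, transporting the reference fiber~$\S_+$ once around~$\delp A$ identifies the vanishing cycle carried by~$R_0$ with~$\mu_+(\g_1)$, or its inverse image, depending on orientation conventions — the point being that $\g(R_0)=\mu_+^{\pm1}(\g_1)$ in~$\S_+$. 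With the indexing convention~(3), $R_0$ and~$R_c$ now cobound a disk containing exactly the one remaining cusp, so they are adjacent and Lemma~\ref{T:adjacent reference paths} gives that $\g_c$ and~$\g(R_0)$ are geometrically dual. Since geometric duality is preserved by the diffeomorphism~$\mu_+^{\mp1}$ (and is symmetric), applying~$\mu_+$ to both curves shows $\mu_+(\g_c)$ and~$\mu_+(\g(R_0))=\g_1$ are geometrically dual, as desired.

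The main obstacle I anticipate is bookkeeping of orientations and conventions: making sure that the ``drag once around~$\delp A$'' operation really implements the monodromy~$\mu_+$ and not~$\mu_+\inv$, and that the disk cut off by~$R_0\cup R_c$ genuinely contains exactly one cusp with the correct orientation of its boundary so that Lemma~\ref{T:adjacent reference paths} applies verbatim. This amounts to unwinding Definition~\ref{D:reference system}(3) together with the identification~$\delpm W\cong\S_\pm(\mu_\pm)$ and the orientation conventions from the introduction; it is routine but needs to be done carefully. A secondary, purely cosmetic point is that one should check~$R_0$ can be chosen with interior disjoint from the interiors of~$R_2,\dots,R_{c-1}$ so that it legitimately plays the role of a reference path adjacent to~$R_c$; this follows by a small isotopy since all the~$R_j$ share the reference points~$p_\pm$ and the annulus minus the critical image deformation retracts onto~$\delp A$.
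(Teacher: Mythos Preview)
Your proposal is correct and follows essentially the same approach as the paper. The only cosmetic difference is that the paper swings~$R_c$ once \emph{forward} around the boundary to obtain a path~$R_c'$ adjacent to~$R_1$ with $\g(R_c')=\mu_+(\g_c)$ directly, whereas you swing~$R_1$ \emph{backward} to obtain~$R_0$ adjacent to~$R_c$ and then apply~$\mu_+$ at the end; the paper's choice eliminates your sign bookkeeping and the final diffeomorphism step, but the underlying idea is identical.
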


In the proof of this Lemma and subsequent consideration we will need the following notion. Let $B$ be an oriented surface and let $R\subset B$ be a proper arc which hits a boundary component $\del_i B\subset\del B$ transversely in a single point. We parametrize a small collar of~$\del_i B$ by $S^1\times[0,1]$ in such a way that~$\del_i B$ corresponds to $S^1\times\set{1}$, $R$~corresponds to $\set{1}\times[0,1]$ and the induced orientations on~$\del_i B$ agree. We say that the arc $R'$ which corresponds to 
	\begin{equation*}
	\left\{ (e^{2\pi t},t) \middle\vert t\in [0,1] \right\}
	\end{equation*}
via the parametrization is obtained from~$R$ by \emph{swinging once around~$\del_i B$}.
\begin{remark}\label{R:swinging vs. Dehn twists}
At first glance, swinging about a boundary component seems to be the same as applying a boundary parallel Dehn twist and up to isotopy this is indeed the case. However, there is a subtle difference since a boundary parallel Dehn twists is usually assumed to be supported in the interior of the surface and thus fixes a small collar of the boundary point wise while the support of the swinging diffeomorphism goes right up to the boundary. 
This difference becomes important in the following situation. 

Let~$S$ be another arc with the same properties as~$R$ such that $R$~and~$S$ are disjoint in the interior of~$B$ and assume that~$S$ leaves~$\del_i B$ after~$R$. If we swing~$S$ once around~$\del_i B$, then the resulting arc remains disjoint from~$R$ but now leaves~$\del_i B$ before~$R$. On the other hand, if we perform a boundary parallel Dehn twist on~$S$, then we keep the exit order at the price of introducing an interior intersection point.
In particular, if $\mcR=\set{R_1,\dots,R_c}$ is a reference system for an annular \swf, then we obtain a new reference system by swinging the last arc~$R_c$ once around each boundary component fo the annulus.
\end{remark}

\begin{proof}[Proof of Lemma~\ref{T:reference systems -> circuits}]
The first statement follows from Lemma~\ref{T:adjacent reference paths} since for~$i<c$ the reference paths $R_i$~and~$R_{i+1}$ are clearly adjacent. 
The second statement needs an additional arguments. We first swing~$R_c$ once around the boundary of~$A$ so that the resulting reference path~$R_c'$ is adjacent to~$R_1$ and thus $\g(R_c')$ is geometrically dual to~$\g(R_1)$. Next we observe that~$R_c'$ is homotopic to~$R_c$ precomposed with the boundary curve. Thus the parallel transport along $R_c'$ is the composition of the parallel transport along $R_c$ and the higher genus monodromy. In particular, we have $\g(R_c')=\mu_+(\g_c)$.
\end{proof}

\begin{remark}\label{R:other side of reference paths}
Note that in the above proof we did not actually need the whole reference system but only the parts of the arcs contained in the higher genus region. 
\end{remark}

Let us isolate the combinatorial structures encountered in the above Lemma.

\begin{definition}\label{D:circuits}
Let $\S$ be a surface. 
A \emph{circuit} (of \emph{length}~$c$) on~$\S$ is an ordered collection of \sccs $\Gamma=(\g_1,\dots,\g_c)$ such that any two adjacent curves $\g_i$~and~$\g_{i+1}$ are geometrically dual for~$i<c$.
A \emph{switch} for $\G$ is a mapping class $\mu\in\M(\S)$ such that $\mu(\g_c)$~and~$\g_1$ are geometrically dual. 
We say that $\G$ is \emph{closed} if $\g_c$~and~$\g_1$ are geometrically dual, i.e.~if the identity works as a switch.
\end{definition}
\begin{definition}\label{D:surface diagrams}
A \emph{twisted surface diagram} is a triple $\SD =(\S,\G,\mu)$ where $\S$ is a closed, oriented surface, $\G$~is a circuit in~$\S$ and $\mu\in\M(\S)$~is a switch for~$\G$.
\end{definition}

\begin{remark}\label{R:chains of curves}
There is no restriction on the intersections of non-adjacent curves in a circuit. Circuits in which non-adjacent curves are disjoint, so called \emph{chains of curves}, are well known objects in the theory of mapping class groups of surfaces where they play an important role.
\end{remark}
\begin{remark}\label{R:orienting circuits}
Sometimes it will be convenient to choose orientations on the curves in a circuit $\G=(\g_1,\dots,\g_c)$ in order to speak of their homology classes. If the ambient surface is oriented, we will always choose orientations such that the intersection of $\g_i$~and~$\g_{i+1}$, $i<c$, has positive sign.
\end{remark}

With this terminology we can rephrase Lemma~\ref{T:reference systems -> circuits} as stating that an annular \swf $w\colon W\ra A$ together with a reference system~$\mcR$ induces a twisted surface diagram
	\begin{equation*}
	\SD_{w,\mcR}:=(\S_+,\G_{w,\mcR},\mu_+)
	\end{equation*}
where the higher monodromy works as a switch.

Note that when the higher monodromy is trivial we obtain a closed circuit and recover Williams' surface diagrams for which we shall reserve this name, i.e.\ in the following the term surface diagram will always mean a triple $(\S,\G,\id)$ which we simply denote by $(\S,\G)$ or sometimes even $(\S;\g_1,\dots,\g_c)$. Whenever we allow nontrivial higher monodromy we will explicitly speak of twisted surface diagrams.

\medskip
Not surprisingly, the twisted surface diagrams constructed in Lemma~\ref{T:reference systems -> circuits} depend on the choice of the reference system. To understand this dependence we observe that a reference system is uniquely determined (up to isotopy relative to the boundary and the cusps) by specifying the first reference path -- this follows directly from the definition. 
Furthermore, it is easy to see that any two reference paths which have the same reference points and hit the same fold arc become isotopic after suitably swinging around the boundary components of~$A$.

Now let $\mcR=\set{R_1,\dots,R_c}$ and $\mcS=\set{S_1,\dots,S_c}$ be two reference systems with common reference points and let~$S_k$ hit the same fold arc as~$R_1$. 
As in the proof of Lemma~\ref{T:reference systems -> circuits} we successively swing the arcs $S_c,S_{c-1},\dots,S_k$ once around each boundary component to obtain a new reference system~$\mcS'$ in which the first reference path hits the same fold arc as~$R_1$. Now, by further swinging all of $\mcS'$ simultaneously, but this time independently around the boundary components, we can match the two first reference paths and thus the whole reference systems.

Let us analyze the effect of this matching procedure on the twisted surface diagram. For brevity of notation let $\SD=(\S,\G,\mu)$ be the twisted surface diagram associated to an annular \swf $w\colon W\ra A$ together with a reference system~$\RR$. Since the surface~$\S$ and the switch~$\mu$ only depend on the reference points, only the circuit~$\G=(\g_1,\dots,\g_c)$ will be affected by swinging some reference paths. Moreover, note again that the vanishing cycles~$\g_i$ only depend on the part of the reference paths contained in the higher genus region. Thus swinging around the lower genus boundary does not change the circuit. 

Now, as we have already observed, if we swing the last reference path in $\RR$ once around both boundary components, we obtain a new reference system~$\RR'$ and which induces the circuit
	\begin{equation*}
	\G_\mu^{[1]}:=\big( \mu(\g_c),\g_1,\dots,\g_{c-1} \big) .
	\end{equation*}
This operation of going from~$\SD$ to $\SD^{[1]}:=(\S,\G_\mu^{[1]},\mu)$ makes sense in the abstract setting of twisted surface diagrams and we call it (and its obvious inverse) \emph{switching}. Note that if the higher monodromy~$\mu$ is trivial, then switching amounts to a cyclic permutation of the vanishing cycles.

Since we can relate any two reference systems for a given annular \swf by suitably swinging reference paths, we see that the twisted surface diagram is well defined up to switching.

\medskip
Next we want to compare the twisted surface diagrams of two equivalent annular \swfs as in the commutative diagram below.
	\begin{equation*}
	\xymatrix{X\ar[d]_w \ar[r]^{\hat{\phi}} & X'\ar[d]^{w'} \\ A\ar[r]^{\check{\phi}} & A'}
	\end{equation*}
If $\RR$ is a reference system for $w$, then $\RR':=\check{\phi}(\RR)$ is a reference system for~$w'$. Let~$\SD=(\S,\G,\mu)$  and~$\SD'=(\S',\G',\mu')$ be the associated twisted surface diagrams. Then $\hat{\phi}$ induces an orientation preserving diffeomorphism $\phi\colon\S\ra \S'$ and clearly the higher monodromies satisfy~$\mu'=\phi\mu\phi\inv$. It is also easy to see that 
	\begin{equation*}
	\G'=\phi(\G):=\big( \phi(\g_1),\dots,\phi(\g_c) \big)
	\end{equation*}
where, as usual, $\G=(\g_1,\dots,\g_c)$. 
Again, the effect of an equivalence of annular \swfs makes sense for abstract twisted surface diagrams and we say that $\SD$~and~$\SD'$ are \emph{diffeomorhpic} via~$\phi$. Putting this together with switching we end up with the following definition.

\begin{definition}\label{D:equivalence of SDs}
Two twisted surface diagrams $\SD$ and $\SD'$ called \emph{equivalent} if, for some integer~$k$, $\SD'$ is diffeomorphic to~$\SD^{[k]}$.
\end{definition}

Summing up the content of this section we have proved the first half of Theorem~\ref{T:annular SWFs <-> twisted SDs}:
\begin{proposition}\label{T:annular SWFs -> twisted SDs}
To an annular \swf $w\colon W\ra A$ we can assign a twisted surface diagram 
	\begin{equation*}
	\SD_w=(\S_+,\G_w,\mu_+)
	\end{equation*}
which is well defined up to switching. Moreover, equivalent annular \swfs have equivalent twisted surface diagram.
\end{proposition}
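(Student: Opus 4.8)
The plan is to collect the constructions developed in this section into a single argument. First I would fix an annular \swf $w\colon W\ra A$ and choose a reference system $\mcR=\set{R_1,\dots,R_c}$ in the sense of Definition~\ref{D:reference system}; such a system exists because the $c$ fold arcs can be listed in the cyclic order in which they meet $\delp A$ and each joined to common reference points $p_\pm\in\delpm A$ by an embedded arc crossing exactly one fold arc, after which the interiors are made disjoint and the entry/exit orders along $\delpm A$ arranged as required. Lemma~\ref{T:fold vanishing cycles} then attaches to each $R_i$ a nonseparating vanishing cycle $\g_i=\g(R_i)\subset\S_+$, and Lemma~\ref{T:reference systems -> circuits} says precisely that $\G_{w,\mcR}=(\g_1,\dots,\g_c)$ is a circuit on $\S_+$ for which the higher monodromy $\mu_+$ is a switch. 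Hence $\SD_{w,\mcR}:=(\S_+,\G_{w,\mcR},\mu_+)$ is a twisted surface diagram.

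Next I would prove independence of the reference system up to switching. The key facts, already isolated above, are that a reference system is determined up to isotopy rel $\del A$ and the cusps by its first arc, and that any two reference arcs with the same reference points hitting the same fold arc agree up to swinging around the boundary components of $A$. Given two reference systems $\mcR$ and $\mcS=\set{S_1,\dots,S_c}$ with $S_k$ on the fold arc of $R_1$, one swings $S_c,S_{c-1},\dots,S_k$ (in that order) once around each boundary component of $A$ to reach a system whose first arc sits on the fold arc of $R_1$, and then swings the whole system simultaneously, independently around the two boundary components, to match $\mcR$ exactly. Since each $\g_i$ depends only on the part of $R_i$ in the higher genus region, swinging around $\delm A$ does not change the circuit, whereas swinging the last arc once around $\delp A$ replaces $\G=(\g_1,\dots,\g_c)$ by $\G_\mu^{[1]}=(\mu_+(\g_c),\g_1,\dots,\g_{c-1})$, i.e.\ performs one switching. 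Composing these moves shows that any two reference systems give twisted surface diagrams related by a sequence of switchings.

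For the last assertion, let $(\hat\phi,\check\phi)$ be an equivalence from $w\colon W\ra A$ to $w'\colon W'\ra A'$. I would transport a reference system $\mcR$ for $w$ to $\mcR'=\check\phi(\mcR)$ for $w'$; since $\hat\phi$ is an orientation-preserving, fiber-preserving diffeomorphism, it restricts to an orientation-preserving diffeomorphism $\phi\colon\S_+\ra\S_+'$ of reference fibers. This $\phi$ carries the handle-cobordism description of $\g(R_i)$ from Lemma~\ref{T:fold vanishing cycles} to that of $\g(\check\phi(R_i))$, so $\G'=\phi(\G)$; conjugating monodromies gives $\mu_+'=\phi\mu_+\phi\inv$; thus $\SD_{w',\mcR'}$ is diffeomorphic to $\SD_{w,\mcR}$ via $\phi$, and together with the switching ambiguity this is precisely equivalence in the sense of Definition~\ref{D:equivalence of SDs}.

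The one point that needs care — the main obstacle — is the bookkeeping in the independence argument: one must respect the distinction between swinging around a boundary component and applying a boundary-parallel Dehn twist (Remark~\ref{R:swinging vs. Dehn twists}) and keep track of the order in which the reference arcs meet $\delpm A$, so that swinging the tail arcs really preserves condition~(3) of a reference system and induces exactly the switching operation on the circuit, with no spurious intersections or reorderings introduced. Everything else is a routine unwinding of the two lemmas and a diagram chase.
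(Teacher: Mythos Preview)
Your proposal is correct and follows essentially the same approach as the paper: the proposition there is stated as a summary of Section~\ref{S:SWFs -> SDs}, and your argument reproduces precisely that development --- existence of a reference system, Lemma~\ref{T:reference systems -> circuits} to obtain the twisted surface diagram, the swinging analysis to reduce the dependence on~$\mcR$ to switching, and the transport of reference systems under an equivalence $(\hat\phi,\check\phi)$ to obtain a diffeomorphism of diagrams. Your closing remark about the bookkeeping of swinging versus boundary-parallel Dehn twists is exactly the subtlety the paper flags in Remark~\ref{R:swinging vs. Dehn twists}.
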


\begin{remark}\label{R:picturing surface diagrams}
We would like to point out that it is very convenient that only the equivalence class of the surface diagram plays a role. Indeed, in order to actually visualize the twisted surface diagram of an annular \swf one has to identify the higher genus reference fiber with some model surface and there is no canonical way to do so. However, any two such identifications will differ by a diffeomorphism of the model surface and thus be equivalent. So we can safely forget about the choice of identification whenever we are only interested in the equivalence class of the \swfs or the diffeomorphism type of its total space.
\end{remark}

\subsection{Handle decompositions for annular \swfs}
	\label{S:handle decompositions}
As a next step we relate the twisted surface diagrams associated to annular \swfs to the topology of their total spaces. We will see that the situation is very similar to Lefschetz fibrations

\begin{proposition}\label{T:handle decompositions from SWFs}
Let $w\colon W\ra A$ be an annular \swf. Then $W$ has a relative handle decomposition on $\delp W$ with one 2-handle for each fold~arc. Such a handle decomposition is encoded in any twisted surface diagram for~$w$.
\end{proposition}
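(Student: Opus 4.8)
The plan is to cut the base annulus $A$ along a reference system and analyze the preimages piece by piece, exactly as one does for Lefschetz fibrations. Fix a reference system $\mcR=\set{R_1,\dots,R_c}$ for $w$ with reference points $p_\pm\in\delpm A$, and consider the arcs $R_i$ as cutting $A$ into $c$ closed ``bigon'' regions $A_1,\dots,A_c$, where $A_i$ is bounded by $R_i$ and $R_{i+1}$ (indices cyclic, with the last region bounded by $R_c$, $R_1$ and a collar of $\delp A$) and each $A_i$ contains exactly one cusp. First I would observe that $w$ restricted to a neighborhood of $\delp A$ is the surface bundle $\delp W\cong\S_+(\mu_+)$, which gives the identification of the $0$-handle of the relative decomposition: $W$ is built on $\delp W\times[0,1]$. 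Then I would analyze $w\inv(A_i)$. By Lemma~\ref{T:fold vanishing cycles} and Lemma~\ref{T:adjacent reference paths}, $w\inv(R_i)$ is a cobordism consisting of a single (3-dimensional) $2$-handle attached along $\g_i\subset\S_+$, and the cusp inside $A_i$ models a death of a cancelling $1$-$2$ pair. Crossing a bigon region $A_i$ from the $R_i$-side to the $R_{i+1}$-side therefore amounts, on the level of $4$-manifolds, to attaching a single $4$-dimensional $2$-handle to $\S_+\times D^2$ along a circle isotopic to $\g_{i+1}$ sitting in a fiber $\S_+\times\{pt\}\subset\S_+\times\del D^2$ — this is the standard local picture of wrinkling/cusp regions (cf.\ the analysis of broken Lefschetz fibrations in~\cite{Baykur2}), and it is where the framing enters: the $2$-handle is attached with a framing that differs from the fiber framing in a way dictated by the fold model. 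Assembling the $c$ regions in order $A_1,\dots,A_c$ and then closing up using the higher monodromy $\mu_+$ as the switch, one sees that $W$ is obtained from $\delp W\times[0,1]$ by attaching exactly $c$ two-handles, one per fold arc, along the circuit $\G_w=(\g_1,\dots,\g_c)$ read off from the reference system.

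The key steps, in order, are: (1) set up the decomposition $A=A_1\cup\dots\cup A_c$ by a reference system and identify $w\inv(\text{collar of }\delp A)$ with $\delp W\times[0,1]$; (2) use the fold model (Lemma~\ref{T:fold vanishing cycles}) to describe $w\inv(\bar\nu R_i)$ as a product cobordism plus one $3$-dimensional $2$-handle along $\g_i$, hence $w\inv(\bar\nu R_i\times[-\eps,\eps])$ as $\S_\pm\times[0,1]^2$ with a $4$-dimensional handle; (3) use the cusp model (the death of a cancelling pair, as in the proof of Lemma~\ref{T:adjacent reference paths}) to check that passing through a cusp region $A_i$ contributes no net handle — the $1$-handle born on the $R_i$ side is cancelled — so that the only surviving handles are the $2$-handles from the fold arcs; (4) track the framings carefully, recording that each $2$-handle framing differs from the fiber framing by the amount prescribed by the fold normal form; (5) close up over the ``seam'' near $\delp A$, where the gluing is by the higher monodromy $\mu_+$, and observe that this is precisely the switch in the twisted surface diagram $\SD_w=(\S_+,\G_w,\mu_+)$, so the attaching data (curves $\g_i$, their cyclic arrangement, the framings, and the ambient identification via $\mu_+$) is exactly what is recorded in any twisted surface diagram for $w$. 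Since the twisted surface diagram is well defined only up to switching and diffeomorphism (Proposition~\ref{T:annular SWFs -> twisted SDs}), I would also remark that switching corresponds to a handle slide / reordering that does not change the resulting handle decomposition up to the obvious equivalence.

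The main obstacle will be step~(4): getting the framings right and giving them an intrinsic description in terms of the surface diagram. For a genuine Lefschetz fibration the vanishing cycle is attached with the fiber framing shifted by $-1$, but here the relevant local model is the indefinite fold~\eqref{E:fold model} rather than a Lefschetz singularity, and one has to compute the attaching framing of the $4$-dimensional $2$-handle from this model (or, equivalently, from the local picture of an indefinite circle with cusps obtained by wrinkling, matching it against~\cite{Baykur2}). This is the ``only difference'' from the Lefschetz case alluded to in the introduction, and it requires a careful normal-form computation near a fold arc; the rest of the argument is a bookkeeping exercise in cutting the base and reassembling handles. A secondary subtlety is making sure the bigon-region decomposition behaves well at the cusps, i.e.\ that the local cusp model really does produce a cancelling $1$-$2$ pair compatibly with the fold $2$-handles on either side; this, however, is exactly the content already extracted in the proof of Lemma~\ref{T:adjacent reference paths} and can be invoked directly.
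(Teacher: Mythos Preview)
Your approach and the paper's are genuinely different, and as written yours has a real gap. The paper does \emph{not} cut $A$ angularly along the reference arcs; instead it chooses a parametrization $\kappa\colon A\to S^1\times[0,1]$ putting the critical image into a standard position (all cusps pointing toward $\delm A$, each fold arc having a unique minimum in the radial coordinate~$t$) and then shows that the composition $p_\kappa:=p\circ\kappa\circ w\colon W\to[0,1]$ is an honest Morse function. The only critical points of~$p_\kappa$ occur at the \emph{concave tangencies} where a circle $S^1\times\{t\}$ touches a fold arc from the $\delp$ side; there the local model is $(t,x,y,z)\mapsto -x^2-y^2+z^2+t^2$, an index-2 Morse point. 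Transverse fold crossings and cusps are checked to be \emph{regular} points of $p_\kappa$. This directly yields one 2-handle per fold arc, and the fold model shows that the attaching circle is the vanishing cycle with framing \emph{equal to} the fiber framing.

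The gap in your argument is that an angular bigon decomposition of $A$ gives you a circle's worth of 3-dimensional Morse functions (one on each radial slice $w^{-1}(R_\theta)$), not a Morse function on the 4-manifold~$W$; the preimages $w^{-1}(A_i)$ are not handles, and you have not said what function on~$W$ has them as sublevel cobordisms. Your step~(3) --- that the cusp ``contributes no net handle because the 1-handle cancels'' --- is a 3-dimensional statement about one radial slice and does not by itself produce a 4-dimensional handle cancellation; indeed in the paper's Morse function the cusps are simply regular points and there is nothing to cancel. Finally, your step~(4) misidentifies the difficulty: the fold 2-handles are attached with the fiber framing \emph{exactly}, not shifted, and this drops out of the fold model immediately once one has the radial Morse function (cf.\ Remark~\ref{R:similarity with Lefschetz fibrations}). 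The comparison with~\cite{Baykur2} is also somewhat off: that paper treats round singularities (fold circles without cusps), which produce a round 2-handle splitting as a 2-handle plus a 3-handle; the presence of cusps here is precisely what lets one avoid 3-handles (see Remark~\ref{R:why we need cusps}).
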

In the following we will refer to the 2-handles associated to the fold arcs as \emph{fold handles}. 
\begin{proof}
The rough idea is to parametrize~$A$ by the model annulus~$S^1\times [0,1]$ such that the composition of~$w$ and the projection $p\colon S^1\times [0,1]\ra [0,1]$ becomes a Morse function. The details go as follows.

We equip $S^1\times [0,1]$ with coordinates $(\theta,t)$ refer to the direction in which~$t$ increases as \emph{right}.
We say that a parametrization $\kappa\colon A\ra S^1\times [0,1]$ is \emph{$w$-regular} if the critical image $C_\kappa:=\kappa\circ w(\Crit_w)$ is in the following \emph{standard position}:
\begin{itemize}
	\item all cusps point to the right
	\item each $R_\theta:=\{\theta\}\times[0,1]$ meets $C_\kappa$ in exactly one point, either in a cusp or transversely in a fold point and
	\item the projection $p$ restricted to $C_\kappa$ has exactly one minimum on each fold arc.
\end{itemize}
We claim that for any $w$-regular parametrization $\kappa$, the map
	\begin{equation*}
	p_\kappa:=p\circ\kappa\circ w\colon W\ra [0,1]
	\end{equation*}
is a Morse function.
Clearly, the critical points of $p_\kappa$ are contained in~$\Crit_w$. Thus we have to understand how the projection~$p$ interacts with the critical image~$C_\kappa$. By the standard position assumption there are three ways how a level set $S_t:=S^1\times\set{t}$ can intersect~$C_\kappa$ (see Figure~\ref{F:slices}):
\begin{enumerate}
	\item[a)] $S_t$ intersects $C_\kappa$ transversely in a fold point,
	\item[b)] $S_t$ meets $C_\kappa$ in a cusp and the fold arcs surrounding the cusp are on the left side of~$S_t$ or
	\item[c)] $S_t$ is tangent to a fold arc which is located on the right side of~$S_t$. We will refer to this phenomenon as a \emph{concave tangency}.
\end{enumerate}
	\begin{figure}[h]
	\includegraphics{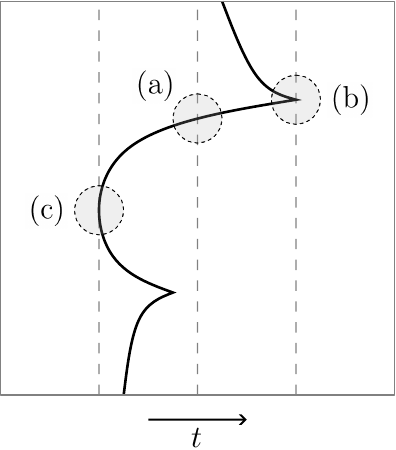}
	\caption{Level sets intersecting the critical image.}
	\label{F:slices}
	\end{figure}
It turns out that only the concave tangencies contribute critical points of~$p_\kappa$. In fact, from the models for the fold an cusp we immediately see that~$p_\kappa$ is modeled on the compositions
	\begin{equation}
	(t,x,y,z)\mapsto(t,-x^3+3tx-y^2+z^2)\mapsto t
	\end{equation}
in case of a cusp intersection and
	\begin{equation}
	(t,x,y,z)\mapsto(t,-x^2-y^2+z^2)\mapsto \pm t
	\end{equation}
for a transverse fold intersection\footnote{The sign depends on how the fold and cusp models are embedded.} which shows that these are regular points of~$p_\kappa$. 

It remains to treat the concave tangencies. These occur precisely at the minima of~$p_\kappa|_{C_\kappa}$. This minimum can be modeled by $t\mapsto t^2$ and 
it is easy to see that
$p_\kappa$ is modeled on 
	\begin{equation}
	(t,x,y,z)\mapsto(-x^2-y^2+z^2+t^2)
	\end{equation}
which is a Morse singularity of index~2. By assumption there is exactly one concave tangency for each fold arc and, using the correspondence between Morse functions and handle decompositions, we obtain the desired handle decomposition.

In order to understand how the fold handles are attached consider the arcs $R_i:=R_{\theta_i}\subset S^1\times [0,1]$ where $\theta_1,\dots,\theta_c\in S^1$ is a sequence of numbers ordered according to the orientation of~$S^1$ (e.g.~the $c$-th~roots of unity). 
The $w$-regular parametrization $\kappa$ can be chosen in such a way that each~$R_i$ is a reference path for precisely one fold arc and~$C_\kappa$ is contained in the open annulus~$S^1\times (\epsilon,1-\epsilon)$ for some~$\epsilon>0$. 
For each $R_i$ we obtain a vanishing cycle~$\g_i$ in the fiber of~$w$ over~$(\theta_i,0)\in\delp A$ and the local model for the fold singularity implies that the fold handles are attached to $\delp W\times[0,\epsilon]$ along the vanishing cycles~$\g_i$ pushed off into the fiber over~$(\theta_i,\epsilon)$ with respect to the canonical framing induced by the fiber.

The relation to twisted surface diagrams now becomes obvious. There is a canonical way to turn the reference paths $\Theta_1,\dots,\Theta_c$ into a reference system by fixing $\Theta_1$ and successively sliding the endpoints of the remaining arcs along the boundary onto~$\Theta_1$ against the orientation. Thus the vanishing cycles record the attaching curves of the fold handles.
\end{proof}

\begin{remark}\label{R:why we need cusps}
The above proposition is one of the reasons that made us require the presence of cusps in the critical loci of \swfs. If there were no cusps, then it would not be possible to avoid \emph{convex tangencies} which correspond to 3-handles instead of 2-handles. Thus the presence of cusps guarantees that the total spaces of annular \swfs are (relative) 2-handlebodies.
\end{remark}
\begin{remark}\label{R:vertical tangencies}
The observation that fold tangencies correspond to Morse singularities also appears in~\cite{GK1} in their more general setting of Morse 2-functions.
The fact that the real part of the Lefschetz model is also a Morse function allows to include Lefschetz singularities in the discussion. Proceeding this way, one can recover Baykur's result about handle decompositions from broken Lefschetz fibrations (see~\cite{Baykur2}).
\end{remark}
\begin{remark}\label{R:similarity with Lefschetz fibrations}
The reader familiar with Lefschetz fibrations will have noticed the strong resemblance of the handle decompositions described above with the ones induces by Lefschetz fibrations. In fact, the handle decompositions have exactly the same structure except that the fold handles are attached with respect to the fiber framing while the framing of the \emph{Lefschetz handles} differs by~$-1$.
\end{remark}

\subsection{Annular \swfs from twisted surface diagrams}
	\label{S:SDs -> SWFs}
Using the handle decompositions exhibited in the previous section as a stepping stone we can now build annular \swfs out of twisted surface diagrams and thus complete the proof of Theorem~\ref{T:annular SWFs <-> twisted SDs}.

\begin{proposition}\label{T:annular SWFs <- twisted SDs}
A twisted surface diagram $\SD=(\S,\G,\mu)$ determines an annular \swf $w_\SD\colon W_\SD\ra S^1\times[0,1]$ with higher genus fiber~$\S$ and higher monodromy~$\mu$. 
\end{proposition}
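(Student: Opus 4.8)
The plan is to run the construction of Proposition~\ref{T:handle decompositions from SWFs} in reverse: starting from $\SD=(\S,\G,\mu)$, build the total space $W_\SD$ by hand as a relative handlebody over a surface bundle and then exhibit the wrinkled fibration structure explicitly. First I would fix the model annulus $A=S^1\times[0,1]$ and choose representatives $\theta_1,\dots,\theta_c\in S^1$ in cyclic order. Over the lower-genus boundary we want $\delm W_\SD$ to be the trivial bundle $\S'\times S^1$, where $\S'$ is the surface obtained from $\S$ by surgering along, say, $\g_c$ (any one of the duals works, up to the swinging ambiguity); the key point is to choose the reference system realization so that the circuit $\G$ records the attaching curves of the fold handles as in the proof just given. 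Concretely, I would start with $\S\times[\epsilon,1]\times S^1$ (a product $\S$-bundle over a sub-annulus), glue on $c$ fold handles — for each $i$ a ``$\del_+$-side handle'' modeled on the fold, attached along $\g_i$ pushed into the fiber over $(\theta_i,\epsilon)$ with the fiber framing — arranged so that consecutive $\g_i,\g_{i+1}$ being geometrically dual (and $\mu(\g_c),\g_1$ being geometrically dual) makes the handle attachments cancel in pairs across each cusp region, exactly as in Lemma~\ref{T:adjacent reference paths}. The twisted gluing of the two ends of the annulus is done by the automorphism of $\delp W_\SD\cong\S(\mu_{\mathrm{prod}})$ coming from $\mu$, which is what builds the higher monodromy into $w_\SD$.

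The second and more geometric step is to promote this handlebody picture to an honest smooth map $w_\SD\colon W_\SD\ra A$ with the required singularity structure. For this I would build the map locally: over a collar of $\delp A$ the map is the projection $\S\times(\text{collar})\ra(\text{collar})$, twisted by $\mu$ around $S^1$; over a collar of $\delm A$ it is the corresponding product projection onto $\delm A$; and in between, for each fold arc, I insert a standard ``concave cusp-arc'' model — two fold arcs joined at a cusp, boundary-parallel in $A$, realized by gluing in local charts of the fold model~\eqref{E:fold model} and cusp model~\eqref{E:cuspmodel}. The dual-intersection hypotheses on $\G$ are precisely what allow these local fold/cusp pieces to be glued consistently with the product pieces on the overlaps: a geometric dual pair $\g_i,\g_{i+1}$ is, by the change of coordinates principle (Proposition~\ref{T:change of coordinates principle}), standard up to a diffeomorphism of $\S$, so the union of the two adjacent fold handles is diffeomorphic to the cobordism appearing in a neighborhood of a cusp. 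Assembling these pieces gives a wrinkled fibration over $A$ whose critical image is a single boundary-parallel circle with $c$ cusps; connectedness of fibers is automatic from the construction, and injectivity on $\Crit$ can be arranged by separating the cusps in the base.

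Finally I would check the two compatibility claims: that $w_\SD$ is genuinely an annular \swf in the sense of Definitions~\ref{D:simple wrinkled fibrations} and~\ref{D:annulas SWFs} (critical locus nonempty, connected, with a cusp, disjoint from $\del W_\SD$, injective on $\Crit$, all fibers connected, critical image boundary-parallel), and that the higher-genus fiber is $\S$ with higher monodromy $\mu$ — the latter being true by construction since the twisting of the annulus was done by $\mu$. I expect the main obstacle to be the gluing step: verifying that the local fold-and-cusp models can be patched together with the product pieces in a smooth, orientation-coherent way so that the resulting critical image is exactly one circle with the prescribed cusps, rather than merely getting the right handle decomposition. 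This is where one must use the geometric (not just algebraic) duality in the circuit, and care is needed because a boundary-parallel Dehn twist versus a ``swing'' differ (Remark~\ref{R:swinging vs. Dehn twists}), so the model pieces have to be chosen with support running up to the relevant part of the base. The well-definedness up to equivalence — independence of the choices of representatives $\theta_i$, of which dual curve one surgers, and of the identification of $\S$ with a model surface — then follows by the swinging and change-of-coordinates arguments already used for the forward direction, giving the inverse assignment needed to complete Theorem~\ref{T:annular SWFs <-> twisted SDs}.
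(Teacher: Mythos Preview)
Your overall strategy --- build the total space by attaching fiber-framed 2-handles to a $\S$-bundle over an annulus, then exhibit the wrinkled fibration --- is the same as the paper's, but there are two genuine problems and one significant difference in execution.

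First, your description of how $\mu$ enters is muddled. You start with ``$\S\times[\epsilon,1]\times S^1$, a product $\S$-bundle over a sub-annulus'', and then later speak of ``the twisted gluing of the two ends of the annulus \dots\ done by $\mu$''. But a bundle over an annulus has no ends to be glued; the annulus is already closed up in the $S^1$-direction. What you want is to start from the mapping torus itself: the paper takes $W_1=\S(\mu)\times[0,\tfrac{1}{3}]$ with the obvious projection to $S^1\times[0,\tfrac{1}{3}]$, so the higher monodromy is built in from the outset. Your later paragraph (``over a collar of $\delp A$ the map is the projection \dots\ twisted by $\mu$ around $S^1$'') is closer to this, but the two descriptions do not match.

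Second, your claim that ``over the lower-genus boundary we want $\delm W_\SD$ to be the trivial bundle $\S'\times S^1$'' is wrong in general: the lower monodromy of the resulting annular \swf is determined by $\SD$ and need not be trivial (cf.\ Section~\ref{S:monodromy of SDs} for the untwisted case; in the twisted case it is still typically nontrivial). You should not --- and need not --- prescribe it.

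Third, your mechanism for producing cusps differs from the paper's and is the hard way around. You propose to glue in explicit local fold/cusp charts and patch them to the product regions. The paper instead observes that after the 2-handle attachments the induced map on the new boundary component is a circle-valued Morse function on a 3-manifold with, for each fold arc, a pair of index~1 and index~2 critical points; the circuit condition (adjacent curves geometrically dual, and $\mu(\g_c)$ dual to $\g_1$) says exactly that these pairs are in cancelling position. Tracing out the standard cancellation homotopy over a final collar $[\tfrac{2}{3},1]$ pushes all critical points into the interior and creates one cusp per pair. This Cerf-theoretic step is what makes the gluing painless and is the idea you are missing; it replaces your ``main obstacle'' entirely.
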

\begin{proof}
To make the construction of $w_\SD$ more transparent we begin with some preliminary considerations.

One important ingredient is the mapping cylinder $\S(\mu)$ which is equipped with a canonical fibration $p\colon\S(\mu)\ra S^1$. 
Given the construction of~$\S(\mu)$ it is convenient to consider~$S^1$ as the quotient~$[0,1]/\{0,1\}$ and we will identify~$\S$ with the fiber over the point~$0\sim1$.

We will now describe a collection of arcs~$\RR=\{R_1,\dots,R_c\}$ in $S^1\times[0,1]$, which we consider as
	\begin{equation*}
	S^1\times[0,1]=[0,1]\times[0,1] / (0,t)\sim(1,t),
	\end{equation*}
that will serve as a reference system for~$w_\SD$ (see Figure~\ref{F:builsing SWFs}~(a)).

Let $r\colon[0,1]\ra [0,1]$ be a smooth function that has the constant value~$1$ on the interval $[\tfrac{1}{3},\tfrac{2}{3}]$, satisfies $r(0)=r(1)=0$ and is strictly increasing (resp.\ decreasing) for $t\leq\tfrac{1}{3}$ (resp.\ $t\geq\tfrac{2}{3}$).
If the length of~$\G$ is $c$, then for $i=1,\dots,c$ we let $\theta_i:=\tfrac{i-1}{c}$ and define
	\begin{equation*}
	R_i:= 
	\big\{ 
		\big(\theta_i r(t),t\big)/\sim
		\;\big\vert \;
		t\in[0,1] \subset S^1\times[0,1] 
	\big\}
	\end{equation*}

	\begin{figure}
	\includegraphics{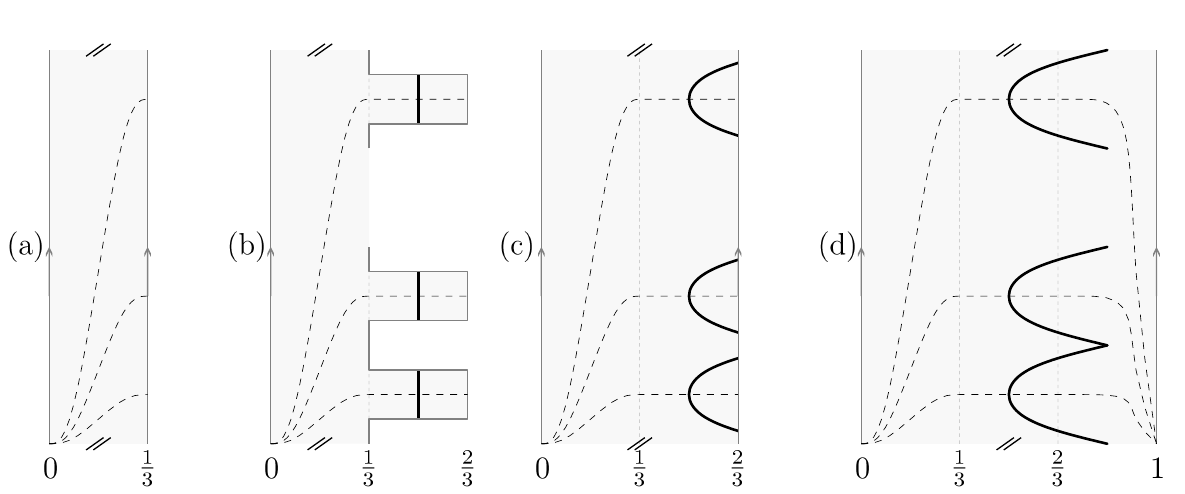}
	\caption{Building a \swf from a surface diagram. (bold: critical image, dashed: reference path)}
	\label{F:builsing SWFs}
	\end{figure}

\medskip
We these remarks in place we can now begin with the construction of~$W_\SD$ and~$w_\SD$. This will be done in three steps.

\smallskip
\textit{Step 1:}
We begin by taking the product 
	\begin{equation*}
	W_1:=\S(\mu)\times[0,\tfrac{1}{3}]
	\end{equation*}
and define a map $w_1\colon W_1\ra S^1\times[0,\tfrac{1}{3}]$ by sending $(x,t)$ to $(p(x),t)$.

\smallskip
\textit{Step 2:} 
Next, we construct~$W_2$ by attaching 2-handles to~$W_1$ in the following way. Let $\G=(\g_1,\dots,\g_c)$. 
Using the arc~$R_i\subset S^1\times[0,1]$ described above we can parallel transport the curve~$\g_i\subset\S$ to the fiber of~$w_1$ over~$(\theta_i,\tfrac{1}{3})$. We attach a 2-handle to the resulting curve with respect to the fiber framing. 

This choice of framing allows us to extend $w_1$ over each 2-handle. Indeed, we can consider attaching the $i$-th~(4-dimensional) 2-handle as a 1-parameter family of 3-dimensional 2-handle attachments parametrized by a small \nbhd of~$(\theta_i,1)$ in $S^1\times\set{1}$. (Of course, these \nbhds are pairwise disjoint.) 
For each point~$\theta$ in such a \nbhd, the restriction of~$w_1$ to the \emph{$\theta$-ray}~$\set{\theta}\times[0,\tfrac{1}{3}]$ extends to a Morse function (with one critical point of index 2) over a slightly longer ray, say $\set{\theta}\times[0,\tfrac{2}{3}]$, in the standard way. 
Using these 1-parameter families of Morse functions we can extend~$w_1$ to map from~$W_2$ to an annulus with ``bumps'' on one side as shown in Figure~\ref{F:builsing SWFs}~(b) and this map has an arc of indefinite folds on each bump. 
We can then smooth out the bumps by standard techniques from differential topology to obtain a map $w_2\colon W_2\ra S^1\times[0,\tfrac{2}{3}]$ in which each 2-handle attachment has created an arc of indefinite folds whose endpoints hit the boundary of~$W_2$ transversely in the component that was affected by the handle attachment (Figure~\ref{F:builsing SWFs}~(c)), let us call this component~$\del_2 W_2$

\smallskip
\textit{Step 3:} 
For the final step we first note that the restriction of $w_2$ over $S^1\times\{\tfrac{2}{3}\}$ is a circle valued Morse function with a pair of critical points of index~1 and~2 for each fold arc of~$w_2$. 
The crucial observation is that the condition that~$\G$ is a circuit with switch~$\mu$ implies that all these pairs of critical points cancel! 
Thus there is a standard homotopy, which we parametrize by $[\tfrac{2}{3},1]$, from $w_2|_{\del_{2}W_2}$~to a submersion that realizes this cancellation.
We let
	\begin{equation*}
	W_\SD:=W_2\cup_{\del_{2}W_2} \del_{2}W_2\times[\tfrac{2}{3},1]
	\end{equation*}
and extend $w_2$ by tracing out the homotopy over the newly added collar of~$\del_{2}W_2$ to obtain a map~$w_\SD\colon W_\SD\ra S^1\times[0,1]$. This last step removes all critical points from the boundary and introduces an interior cusp for any canceling pair. Clearly $w_\SD$ is an annular \swf with base diagram as in Figure~\ref{F:builsing SWFs}~(d).

Note that~$W_\SD$ is diffeomorphic to~$W_2$ and thus has the same relative handle decomposition.
Moreover, it follows directly from the construction that~$\RR$ is a reference system for~$w_\SD$ with~$\SD$ as its twisted surface diagram.
\end{proof}
In order to finish the proof of Theorem~\ref{T:annular SWFs <-> twisted SDs} we have to show that equivalent twisted surface diagram give equivalent annular \swfs. Recall that an equivalence of surface diagram is a combination of two things: switching and a diffeomorphism. We will treat these separately.

\begin{lemma}\label{T:diffeomorphis SDs -> equivalent SWFs}
If $\SD$ and $\SD'$ are diffeomorphic, then $w_\SD$ and $w_{\SD'}$ are equivalent.
\end{lemma}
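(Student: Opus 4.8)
The plan is to run the three-step construction of Proposition~\ref{T:annular SWFs <- twisted SDs} ``equivariantly'' with respect to a diffeomorphism $\phi\colon\S\ra\S'$ witnessing the hypothesis, i.e.\ one with $\phi(\G)=\G'$ and $\mu'=\phi\mu\phi\inv$ in~$\M(\S')$; in effect I want to show that the assignment $\SD\mapsto w_\SD$ is functorial under such~$\phi$. The basic tool is that $\phi$ induces, by the standard fact that conjugate monodromies give isomorphic surface bundles over~$S^1$, a fiber- and orientation-preserving diffeomorphism $\Phi\colon\S(\mu)\ra\S'(\mu')$ covering the identity of~$S^1$ --- concretely $[x,s]\mapsto[\phi(x),s]$, after adjusting $\mu$ and $\mu'$ within their isotopy classes so that $\mu'=\phi\mu\phi\inv$ holds literally. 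I would then thicken $\Phi$ into an equivalence $\hat\phi\colon W_\SD\ra W_{\SD'}$ over $\check\phi=\id_{S^1\times[0,1]}$; the identity is the correct base map because the reference arcs~$R_i$ and the angles $\theta_i=\tfrac{i-1}{c}$ used in the construction depend only on the common length~$c$ of $\G$ and $\G'=\phi(\G)$.

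Proceeding step by step: in Step~1, $\Phi\times\id$ is an orientation-preserving diffeomorphism $\S(\mu)\times[0,\tfrac13]\ra\S'(\mu')\times[0,\tfrac13]$ with $w_1'\circ(\Phi\times\id)=w_1$. In Step~2 the $i$-th fold handle of $W_\SD$ is a $2$-handle attached, with the fiber framing, along the parallel transport of $\g_i$ over~$R_i$, while the $i$-th handle of $W_{\SD'}$ is attached along the parallel transport of $\g_i'=\phi(\g_i)$, again with the fiber framing; since parallel transport is natural under the bundle map $\Phi\times\id$ and $\phi$ carries fiber framings to fiber framings, $\Phi\times\id$ identifies the attaching data of the $i$-th handles, and taking the primed attaching maps to be the $\Phi$-images of the unprimed ones lets $\Phi\times\id$ extend to an orientation-preserving diffeomorphism of the two $2$-handlebodies. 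The extension of~$w_1$ over the handles uses the canonical $1$-parameter families of index-$2$ Morse functions along the rays $\{\theta\}\times[0,\tfrac23]$, so it is again intertwined by the extended~$\Phi$; choosing the smoothing of the bumps on the $\SD'$-side to be the push-forward under~$\Phi$ of the one on the $\SD$-side then yields a diffeomorphism $W_2\ra W_2'$ with $w_2'\circ(\text{ext.\ }\Phi)=w_2$. Step~3 is the same pattern: $w_2'|_{\del_2 W_2'}$ is the $\Phi$-transport of $w_2|_{\del_2 W_2}$, the canceling index-$1$/index-$2$ pairs match up, and the model cancellation homotopy on the primed side may be taken to be the $\Phi$-trace of the one on the unprimed side; extending over the final collars $\del_2 W_2\times[\tfrac23,1]$ produces $\hat\phi\colon W_\SD\ra W_{\SD'}$ with $w_{\SD'}\circ\hat\phi=w_\SD$, and orientation-preservation of $\hat\phi$ follows from that of $\phi$ and of all the standard local models. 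This is precisely an equivalence of annular \swfs.

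The step I expect to need the most care --- though it is bookkeeping rather than a genuine obstruction --- is the compatibility of the auxiliary choices in Proposition~\ref{T:annular SWFs <- twisted SDs}: the chosen representatives, within their isotopy classes, of the parallel-transported vanishing cycles, the smoothing of the bumps in Step~2, and the model cancellation homotopies in Step~3. The cleanest way to handle this is to \emph{fix} all of these once and for all when constructing $w_\SD$ and then to \emph{define} $w_{\SD'}$ via their $\phi$-images (respectively their images under the successive extensions of~$\Phi$), so that every identity above holds on the nose. If instead one prefers $\SD\mapsto w_\SD$ to be well defined up to equivalence independently of the choices, that independence is established by the same locality arguments --- any two sets of choices differ by an ambient isotopy supported near the fold handles, respectively near the final collar, and such an isotopy does not alter the equivalence class of the resulting \swf --- whence the lemma follows formally.
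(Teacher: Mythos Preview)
Your proposal is correct and follows essentially the same approach as the paper: both run the three-step construction of Proposition~\ref{T:annular SWFs <- twisted SDs} equivariantly, first extending $\phi$ to $\S(\mu)\to\S'(\mu')$ and then over the handle attachments and the final collar. Your treatment is in fact more careful than the paper's about the auxiliary choices (representatives of attaching curves, smoothing, cancellation homotopies); the paper simply declares that the smoothing ``does not involve the total space'' and that the Step~3 homotopy can be pushed forward, whereas you spell out that one may either transport all choices via~$\Phi$ or appeal to their well-definedness up to equivalence.
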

\begin{proof}
Let $\SD=(\S,\G,\mu)$, $\SD'=(\S',\G',\mu')$ and let $\phi\colon\S\ra \S'$ be a diffeomorphism such that~$\G'=\phi(\G)$ and~$\mu'=\phi\mu\phi\inv$. We will extend~$\phi$ to a diffeomorphism $\hat{\phi}\colon W_\SD\ra W_{\SD'}$ which fits in the commutative diagram
	\begin{equation*}
	\xymatrix{
		W_\SD\ar[dr]_{w_\SD} \ar[rr]^{\hat{\phi}} & & W_{\SD'}\ar[dl]^{w_{\SD'}} \\
		& S^1\times[0,1] &
	}
	\end{equation*}
This will be done by going through the steps in the proof of Proposition~\ref{T:annular SWFs <- twisted SDs}. Let $W_i$ and $W_i'$, $i=1,2$, denote the 4-manifolds built in each step.

From the identity~$\mu'=\phi\mu\phi\inv$ we see that~$\phi$ induces a fiber preserving diffeomorphism~$\S(\mu)\ra \S'(\mu')$. Taking the product with the identity, we obtain~$\hat{\phi}_1\colon W_1\ra W_1'$.

In the second step, where the 2-handles are attached to the curves in~$\G$, we simply note that $\hat{\phi}_1$ maps the attaching regions into each other and can thus be extended over the 2-handles to $\hat{\phi}_2\colon W_2\ra W_2'$. Note that the smoothing of the bumpy annulus does not cause any trouble since it does not involve the total space.

For the third step observe that, given a homotopy from $w_2|_{\del_2 W_2}$ to a submersion, we can push it forward via $\hat{\phi}_2|_{\del_2 W_2}$ to obtain such a homotopy for $w_2'|_{\del_2 W_2'}$.
\end{proof}

\begin{lemma}\label{T:switching -> equivalent SWFs}
If $\SD$ is a twisted surface diagram, then $w_\SD$ and $w_{\SD^{[1]}}$ are equivalent.
\end{lemma}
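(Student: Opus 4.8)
The plan is to revisit the explicit construction in the proof of Proposition~\ref{T:annular SWFs <- twisted SDs} and to realize the switching operation $\SD\rightsquigarrow\SD^{[1]}$ at the level of the total space by an isotopy of the base that keeps the fibration structure intact. Recall that $\SD^{[1]}=(\S,\G_\mu^{[1]},\mu)$ is induced by the same annular \swf equipped with the reference system $\RR'$ obtained from $\RR$ by swinging the last reference path $R_c$ once around both boundary components. So the content of the lemma is essentially that the \swf built from $\SD$ \emph{already carries} $\RR'$ as a reference system with associated diagram $\SD^{[1]}$, and hence (by the last sentence of the proof of Proposition~\ref{T:annular SWFs <- twisted SDs} read for $\SD^{[1]}$ in place of $\SD$) it must be equivalent to $w_{\SD^{[1]}}$.

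Concretely, I would argue as follows. First I would invoke Proposition~\ref{T:annular SWFs -> twisted SDs}: the twisted surface diagram of a given annular \swf is well defined only up to switching, and switching is exactly the change-of-reference-system move of swinging $R_c$ around $\del A$. Applying this to $w=w_\SD$, whose diagram with respect to the reference system $\RR$ constructed in the proof is $\SD$, we conclude that $w_\SD$ also admits a reference system $\RR'$ whose associated twisted surface diagram is $\SD^{[1]}$. Second, I would observe that the construction $\SD'\mapsto w_{\SD'}$ of Proposition~\ref{T:annular SWFs <- twisted SDs} is \emph{inverse} to the assignment $w\mapsto\SD_{w,\RR}$ in the sense made precise at the end of that proof: any annular \swf together with a reference system realizing a given twisted surface diagram $\SD'$ is equivalent to $w_{\SD'}$ by an equivalence carrying the reference system to the standard one. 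Applying this with $\SD'=\SD^{[1]}$ and the reference system $\RR'$ on $w_\SD$ yields an equivalence $w_\SD\simeq w_{\SD^{[1]}}$.

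To make the second point airtight I would spell out why ``realizing the same twisted surface diagram'' forces equivalence. The 4-manifold $W_\SD$ is, up to diffeomorphism, the relative $2$-handlebody $\S(\mu)\times[0,\tfrac13]$ with one fold handle attached along each $\g_i$ with the fiber framing (Step~2 and the final remark of the proof of Proposition~\ref{T:annular SWFs <- twisted SDs}), and the map $w_\SD$ is reconstructed from this handle data together with $\mu$ by the canonical Steps~1--3. Since the handle data and $\mu$ are determined by the twisted surface diagram, two annular \swfs carrying reference systems with the same associated diagram have canonically identified total spaces over $S^1\times[0,\tfrac23]$, and the cancellation homotopy of Step~3 — which depends only on the circuit-with-switch condition — matches up as well; patching these identifications gives the desired fiber-preserving diffeomorphism. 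I would therefore phrase the proof as: \emph{the annular \swf $w_\SD$ with the swung reference system $\RR'$ has twisted surface diagram $\SD^{[1]}$; by the reconstruction of Proposition~\ref{T:annular SWFs <- twisted SDs} it is equivalent to $w_{\SD^{[1]}}$; hence $w_\SD$ and $w_{\SD^{[1]}}$ are equivalent.}

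The main obstacle I anticipate is purely bookkeeping rather than conceptual: one must check that the isotopy of the base $S^1\times[0,1]$ that moves the arcs of $\RR$ to those of $\RR'$ (rotating the ``bumps'' once around the $S^1$ factor and sliding the cusp region accordingly) can be chosen to preserve the standard position of the critical image and to be covered by an isotopy of $W_\SD$ — i.e.\ that swinging a reference path, as opposed to performing an interior boundary-parallel Dehn twist, genuinely does not alter the attaching curves of the fold handles (this is the subtle distinction highlighted in Remark~\ref{R:swinging vs. Dehn twists}), and that the special case $\mu=\id$ of switching reduces to the evident cyclic relabeling of the bumps. These verifications are routine given Remark~\ref{R:swinging vs. Dehn twists} and Remark~\ref{R:other side of reference paths}, so I would relegate them to a sentence or two rather than a full computation.
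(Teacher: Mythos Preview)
Your proposal is correct and follows essentially the same approach as the paper: swing the last reference path of the canonical reference system for $w_\SD$ around the boundary to obtain a reference system inducing $\SD^{[1]}$, and conclude that $w_\SD$ and $w_{\SD^{[1]}}$ are equivalent. The paper's proof is a terse two sentences that leaves the ``reconstruction'' step implicit (it simply says the two fibrations ``can be considered as the same annular \swf''), whereas you spell out why realizing the same twisted surface diagram forces equivalence via the handle data of Proposition~\ref{T:annular SWFs <- twisted SDs}; this extra care is reasonable but not a different argument.
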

\begin{proof}
If we take the canonical reference system for~$w_\SD$ and swing the last reference path once around the boundary, we obtain a reference system which induces~$\SD^{[1]}$. Thus~$w_\SD$ and~$w_{\SD^{[1]}}$ can be considered as the same annular \swf.
\end{proof}

Combining these two lemmas we obtain
\begin{corollary}\label{T:equ SDs -> equ SWFs}
If $\SD$ and $\SD'$ are equivalent, then so are $w_\SD$ and $w_{\SD'}$.
\end{corollary}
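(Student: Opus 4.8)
The plan is to read off the corollary from the two preceding lemmas, using only that equivalence of annular \swfs is an equivalence relation (in particular symmetric and transitive, so that finite chains of equivalences compose). First I would unwind Definition~\ref{D:equivalence of SDs}: saying that $\SD$ and $\SD'$ are equivalent means precisely that $\SD'$ is diffeomorphic to $\SD^{[k]}$ for some $k\in\Z$. The case $k=0$ is exactly Lemma~\ref{T:diffeomorphis SDs -> equivalent SWFs}, so the content is to reduce the general case to it.

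The key step is therefore to show that $w_\SD$ is equivalent to $w_{\SD^{[k]}}$ for every $k\in\Z$, which I would do by induction on $|k|$. For $k>0$ we have $\SD^{[k]}=\big(\SD^{[k-1]}\big)^{[1]}$, so Lemma~\ref{T:switching -> equivalent SWFs} applied to $\SD^{[k-1]}$ gives that $w_{\SD^{[k-1]}}$ and $w_{\SD^{[k]}}$ are equivalent, and the inductive hypothesis together with transitivity finishes that step. For $k<0$ one runs the same argument using that swinging the last reference path around the boundary in the opposite direction realizes the inverse operation; equivalently, apply Lemma~\ref{T:switching -> equivalent SWFs} to $\SD^{[k]}$ (noting $(\SD^{[k]})^{[1]}=\SD^{[k+1]}$) to climb back up to $k=0$. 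Once $w_\SD$ is equivalent to $w_{\SD^{[k]}}$, Lemma~\ref{T:diffeomorphis SDs -> equivalent SWFs} gives that $w_{\SD^{[k]}}$ is equivalent to $w_{\SD'}$, and composing the two equivalences yields the claim.

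I do not expect a genuine obstacle here; the argument is purely formal. The only points deserving a sentence of justification are that the equivalence relation on annular \swfs is transitive and symmetric --- which is immediate, since one composes (resp.\ inverts) the pairs of orientation preserving diffeomorphisms $(\hat\phi,\check\phi)$ from Definition~\ref{D:simple wrinkled fibrations} --- and that switching in the negative direction also preserves equivalence, for which it suffices to apply Lemma~\ref{T:switching -> equivalent SWFs} with $\SD^{[-1]}$ in place of $\SD$, since $(\SD^{[-1]})^{[1]}=\SD$.
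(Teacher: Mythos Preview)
Your proposal is correct and follows exactly the route the paper takes: the paper simply says the corollary follows by ``combining these two lemmas,'' and you have spelled out that combination carefully, including the induction on $|k|$ to iterate Lemma~\ref{T:switching -> equivalent SWFs} and the appeal to transitivity of equivalence. There is nothing to add.
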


\subsection{Gluing ambiguities}
	\label{S:Gluing ambiguities}
Now that we know how to study annular \swfs in terms of their twisted surface diagrams, recall that \swfs over arbitrary base surfaces can be obtained from an annular ones by gluing suitable surface bundles to the boundary components.
To be precise, let $w_0\colon W\ra A$ be an annular \swf and let $\pi_\pm\colon Y_\pm\ra B_\pm$ be surface bundles over surfaces~$B_\pm$ such that there are boundary components $C_\pm\subset B_\pm$ and fiber preserving diffeomorphisms $\psi_\pm\colon \pi_\pm\inv(C_\pm)\ra \delpm W$. Then we can form a \swf
	\begin{equation*}
	w\colon Y_+\cup_{\psi_+} W \cup_{\psi_-} Y_- \lra B_+ \cup_{C_+} A \cup_{C_-} B_-.
	\end{equation*}
Of course, different choices of gluing diffeomorphisms may lead to inequivalent \swfs. 
If we fix a pair~$\psi_\pm$ of gluing maps, then we can obtain any other such pair by composing with automorphisms (in the sense of Section~\ref{S:mapping tori}) of the boundary fibrations $w_0\colon\delpm W\ra S^1$. 
Obviously, isotopic gluing maps give rise to equivalent \swfs and the gluing ambiguities are a priori parametrized by 
	\begin{equation*}
	\pi_0\big(\Aut(\delp W, w)\big) \times \pi_0\big(\Aut(\delm W,w)\big)
	\end{equation*}
However, it turns out that the first factor can be eliminated.

\begin{lemma}\label{T:boundary diffeomorphisms extend}
Let $w\colon W\ra A$ be an annular \swf. Then any fiber preserving diffeomorphism of~$\delp W$ extends to an auto-equivalence of~$w$.
\end{lemma}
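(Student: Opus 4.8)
The plan is to exploit the handle-decomposition picture from Proposition~\ref{T:handle decompositions from SWFs} together with the structure of $\Aut(\delp W)$ coming from~\eqref{E:automorphisms of mapping tori}. Write $\delp W\cong\S_+(\mu_+)$ and recall that $\pi_0(\Aut(\delp W))\cong C_{\M(\S_+)}(\mu_+)\ltimes\pi_1(\Diff(\S_+),\id)$. So it suffices to extend two kinds of fiber-preserving diffeomorphisms of $\delp W$ to auto-equivalences of $w$: (i) those that are constant on the fibers, represented by a mapping class $g\in C_{\M(\S_+)}(\mu_+)$, and (ii) the ``non-constant'' ones coming from a loop in $\Diff(\S_+)$ based at the identity. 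In both cases the strategy is the same: realize the diffeomorphism of $\delp W$ as the restriction of a diffeomorphism of a collar $\delp W\times[0,\eps]$ that covers an isotopy of the base annulus fixing the critical image, and then show this extends across the fold handles and the mapping-torus piece.

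For case~(i), pick a representative $G\in\Diff^+(\S_+)$ of $g$ with $\mu_+ G=G\mu_+$ up to isotopy; then $G$ induces a fiber-preserving diffeomorphism of the mapping torus $\delp W$. Using the construction in the proof of Proposition~\ref{T:annular SWFs <- twisted SDs}, recall $W$ is built from $\S_+(\mu_+')\times[0,\tfrac13]$ (the lower-monodromy mapping cylinder) by attaching fold handles along the vanishing cycles $\g_i$ pushed into the fibers, followed by a collar realizing a cancellation. Here I would instead exhibit $W$ relative to $\delp W$: by Proposition~\ref{T:handle decompositions from SWFs}, $W$ deformation retracts onto $\delp W$ with $c$ fold $2$-handles attached along the $\g_i$ (with fiber framing), each attachment fibered over an arc. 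The diffeomorphism $G$ of $\delp W$ moves the attaching circles to $G(\g_i)$, so it does not obviously extend — and this is the main obstacle (see below). To get around it one uses freedom in the choice of reference system: swinging reference paths lets us replace $\SD_w$ by any switch $\SD_w^{[k]}$, and, crucially, an auto-equivalence need not fix the reference system, only the annular \swf up to the base isotopy $\check\phi$. So the real claim is that $G$ together with a suitable base diffeomorphism $\check\phi$ of $A$ (isotopic to the identity, carrying $\delp A$ to itself) lifts to a diffeomorphism of $W$. I would produce $\hat\phi$ handle by handle exactly as in the proof of Lemma~\ref{T:diffeomorphis SDs -> equivalent SWFs}, but now applied to the ``equivalence'' $\SD_w\to\SD_w$ induced by $G$: the point is that $G$ maps the circuit $\G_w$ to the circuit $G(\G_w)$, and $G(\G_w)$ is again a valid circuit with switch $\mu_+$ (since $G$ commutes with $\mu_+$), hence $(\S_+,G(\G_w),\mu_+)$ is diffeomorphic to $(\S_+,\G_w,\mu_+)$ via $G$ itself — Lemma~\ref{T:diffeomorphis SDs -> equivalent SWFs} then furnishes $\hat\phi\colon W_\SD\to W_{\SD'}$, and since both twisted surface diagrams are literally equal after relabeling, this $\hat\phi$ is an auto-equivalence restricting to $G$ on $\delp W$.

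For case~(ii), the diffeomorphism of $\delp W$ comes from a loop $(\phi_t)_{t\in S^1}$ in $\Diff(\S_+)$ with $\phi_0=\id$; such a diffeomorphism is isotopic, \emph{through fiber-preserving diffeomorphisms}, to one supported in an arbitrarily small collar $\S_+\times[0,\delta]\subset\delp W$ of a single fiber. Choosing that fiber to lie over a base point away from all the arcs $R_i$, I would extend the isotopy radially into $W$: push the collar of the chosen fiber inward along a small bigon in $A$ disjoint from the critical image, so that the diffeomorphism is supported in a product region $\S_+\times D^2\subset W$ on which $w$ is just the projection to $D^2$. Over such a region any fiber-preserving diffeomorphism extends trivially by the identity elsewhere and covers an isotopy of $A$ fixing the critical image, giving the desired auto-equivalence. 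Finally, since $\pi_0(\Aut(\delp W))$ is generated by elements of types~(i) and~(ii) and auto-equivalences of $w$ form a group, every fiber-preserving diffeomorphism of $\delp W$ extends. The main difficulty, as indicated, is case~(i): making precise that moving the fold-handle attaching circles by a mapping class commuting with $\mu_+$ can be absorbed, which I handle by routing it through the already-proven Lemma~\ref{T:diffeomorphis SDs -> equivalent SWFs} rather than by a direct handle-slide argument.
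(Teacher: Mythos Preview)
Your treatment of case~(i) is fine and is exactly what the paper does: the constant automorphisms are handled by invoking Lemma~\ref{T:diffeomorphis SDs -> equivalent SWFs}, since a mapping class $g\in C_{\M(\S_+)}(\mu_+)$ yields a diffeomorphism of the twisted surface diagram $(\S_+,\G_w,\mu_+)$ to itself.

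Your argument for case~(ii), however, has a genuine gap. You propose to concentrate the loop $(\phi_t)$ in a small interval $I\subset\delp A$, take a half-disk $D\subset A$ with $D\cap\delp A=I$ disjoint from the critical image, and extend the automorphism over the product region $w\inv(D)\cong\S_+\times D$ by the identity elsewhere. But for such an extension to patch with the identity on $\S_+\times(\del D\setminus I)$, you need a map $D\to\Diff(\S_+)$ which restricts to $\phi_t$ on~$I$ and to $\id$ on the rest of~$\del D$; that is precisely a null-homotopy of the loop~$\phi$. When $[\phi]\neq0$ in $\pi_1(\Diff(\S_+),\id)$ this is impossible, so the argument collapses exactly in the cases where there is something to prove. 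Allowing a nontrivial base diffeomorphism $\check\phi$ supported in~$D$ does not help: since $\check\phi|_{\del D}=\id$, it is isotopic to~$\id$ rel~$\del D$, and composing with that isotopy reduces back to the same obstruction.

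The paper's proof of case~(ii) is genuinely different and uses the fold handles in an essential way. First, Theorem~\ref{T:Earle-Eells} is invoked to reduce to the genus-one case $\S_+=T^2$. Then one uses that $\pi_1(\Diff(T^2),\id)\cong\pi_1(T^2)$ is generated by the rotation loops $h^{\g_1}_t$ and $h^{\g_2}_t$ along the first two vanishing cycles. The crucial geometric point is that $h^{\g_i}_t$ preserves~$\g_i$ setwise for all~$t$ (it merely rotates it), so after localizing the loop to the interval where the $i$-th fold handle is attached, the rotation extends across that $2$-handle in a fibered way, and is the identity elsewhere. This is why the presence of cusps (and hence of vanishing cycles generating $\pi_1(T^2)$) is essential; see Remark~\ref{R:gluing without cusps}. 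Your product-region argument, by contrast, never touches the critical locus and would apply equally well to the cuspless maps of Example~\ref{eg:ADK sphere}, where the lemma is known to fail.
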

\begin{proof}
By Theorem~\ref{T:annular SWFs <-> twisted SDs} we can assume that~$w$ is built from a twisted surface diagram~$\SD=(\S,\G,\mu)$ such that~$\delp W=\S(\mu)$.
According to~\eqref{E:automorphisms of mapping tori} there are two types of automorphisms of~$\S(\mu)$, the \emph{constant} ones coming from~$C_{\M(\S)}(\mu)$ and the \emph{non-constant} ones originating from~$\pi_1(\Diff(\S),\id)$.
The statement that constant automorphisms of~$\delp W$ extend to auto-equivalences of~$w$ is just a reformulation of Lemma~\ref{T:diffeomorphis SDs -> equivalent SWFs}. Thus it remains to treat the non-constant ones.

Recall that by Theorem~\ref{T:Earle-Eells} these only occur when~$\S$ has genus one. We can thus assume that $\S=T^2$. 
A well known refinement of Theorem~\ref{T:Earle-Eells} states that the map
	\begin{equation}\label{E:torus iso}
	\pi_1\big(\Diff(T^2),\id\big)\ra \pi_1(T^2,x)
	\end{equation}
which sends an isotopy to the path traced out by a base point~$x\in T^2$ during that isotopy is an isomorphism (see~\cite{Earle-Eells}).
Note that the fundamental group of~$T^2$ is generated by the curves~$\g_1$ and~$\g_2$ (after choosing orientations, of course) if we take their unique intersection point as base point. Hence, we only have to extend the automorphisms coming from generators of~$\pi_1\big(\Diff(T^2),\id\big)$ mapping to~$\g_1$ and~$\g_2$ in~\eqref{E:torus iso}.
If one parametrizes the torus by $S^1\times S^1\subset \C^2$ such that $S^1\times\set{1}$ maps to~$\g_1$ and $\set{1}\times S^1$ maps to~$\g_2$, then such generators are given by
	\begin{equation*}
	h^{\g_1}_t(\xi,\eta):=\big( e^{2\pi i\,t}\xi, \eta \big)
	\quad\text{and}\quad
	h^{\g_2}_t(\xi,\eta):=\big( \xi, e^{2\pi i\,t}\eta \big)
	\quad\quad
	(t\in[0,1])
	\end{equation*}
and we denote the corresponding automorphisms of~$\S(\mu)$ by 
	\begin{equation*}
	\varphi_i(x,t):=\big( h^{\g_i}_t(x),t \big).
	\end{equation*}
In order to extend~$\varphi_i$ to~$Z_\SD$ we take one step back and homotope the path~$h^{\g_i}$ to be constant outside the interval where the 2-handle corresponding to~$\g_i$ is attached. These intervals (times [0,1]) are highlighted in Figure~\ref{F:gluing}.
	\begin{figure}
	\includegraphics{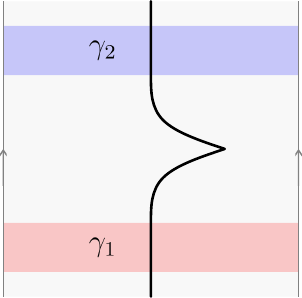}
	\caption{The relevant regions for extending non-constant automorphisms.}
	\label{F:gluing}
	\end{figure}
Outside the preimage of the regions shown in Figure~\ref{F:gluing} we can simply extend~$\varphi_i$ as the identity. In these region, observe that~$h^{\g_i}_t$ fixes~$\g_i$ set wise at all times, it just rotates it more and more as~$t$ increases.
It is easy to see that these rotations can be extended across the 2-handles in a way that respects the fibration structure. 
\end{proof}

\begin{remark}\label{R:gluing without cusps}
The genus one case of Example~\ref{eg:ADK sphere} shows that this Lemma does not hold without in the absence of cusps. The above proof breaks down at the point where we need the vanishing cycles to generate the fundamental group.
\end{remark}

\section{\Swfs over the disk and the sphere}
	\label{S:SWFs over disk and sphere}
We now leave the general theory behind and focus on untwisted surface diagrams, i.e.\ pairs~$(\S,\G)$ where~$\G$ is a closed circuit in~$\S$. By the correspondence established in the previous section such a surface diagram corresponds to an annular \swf whose higher genus boundary component has trivial monodromy. We can thus fill this boundary component with~$\S\times D^2$ using some fiber preserving diffeomorphism of~$\S\times S^1$ to obtain a \swf over the disk. 
(Note that the boundary of the disk is contained in the lower genus region; we will refer to such fibrations as \emph{descending} \swfs over the disk.) 
Furthermore, by Lemma~\ref{T:boundary diffeomorphisms extend} different choices of gluing diffeomorphisms produce equivalent \swfs. 
Altogether, we have established the following.
\begin{proposition}\label{T:SDs and descending SWFs}
There is bijective correspondence between (untwisted) surface diagrams up to equivalence and descending \swfs over the disk up to equivalence.
\end{proposition}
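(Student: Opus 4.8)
The plan is to obtain the correspondence by combining the untwisted case of Theorem~\ref{T:annular SWFs <-> twisted SDs} with the gluing Lemma~\ref{T:boundary diffeomorphisms extend}, essentially making precise the paragraph preceding the statement. A surface diagram $(\S,\G)$ is the twisted surface diagram $(\S,\G,\id)$, so Proposition~\ref{T:annular SWFs <- twisted SDs} produces an annular \swf $w_0\colon W\ra A$ with higher genus fibre $\S$ and trivial higher monodromy; hence $\delp W$ is fibre-preservingly diffeomorphic to $\S\times S^1$. First I would glue a copy of $\S\times D^2$ onto $\delp W$ along a fibre-preserving diffeomorphism $\psi\colon\S\times S^1\ra\delp W$: the base becomes $A\cup_{S^1}D^2\cong D^2$ with $\del D^2=\delm A$, so the boundary of the disk lies in the lower genus region and the result is a descending \swf over the disk.

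Next I would check that its equivalence class depends only on that of $(\S,\G)$. Two gluing maps $\psi,\psi'$ differ by a fibre-preserving diffeomorphism $h=\psi'\circ\psi\inv$ of $\delp W$, which extends to an auto-equivalence $\hat h$ of $w_0$ by Lemma~\ref{T:boundary diffeomorphisms extend}; then $\hat h$ together with $\id_{\S\times D^2}$ descends to an equivalence between the two glued-up fibrations over the disk, so the choice of gluing map is immaterial. For the dependence on the surface diagram, an equivalence $(\S,\G)\sim(\S',\G')$ is, by Definition~\ref{D:equivalence of SDs}, a sequence of switchings followed by a diffeomorphism; switching leaves the annular \swf literally unchanged (Lemma~\ref{T:switching -> equivalent SWFs}), while the diffeomorphism case (Corollary~\ref{T:equ SDs -> equ SWFs}, via the proof of Lemma~\ref{T:diffeomorphis SDs -> equivalent SWFs}) gives an equivalence of annular \swfs covering the identity of the model annulus and restricting on $\delp W$ to a fibre-preserving diffeomorphism extending some $\phi\colon\S\ra\S'$; this extends over the glued $\S\times D^2$ by $\phi\times\id_{D^2}$ once the two gluing maps are chosen compatibly (which is allowed, the gluing map being immaterial).

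For the inverse, given a descending \swf $v\colon V\ra D^2$, its critical image is a topologically embedded circle bounding a disk $D_+$ in the interior, contained in the higher genus region; I remove a small open disk $D_\circ$ with closure in the interior of $D_+$, so that $A:=D^2\setminus D_\circ$ is an annulus in which the critical image is boundary parallel, and $w_0:=v|_{v\inv(A)}\colon v\inv(A)\ra A$ is an annular \swf whose higher monodromy is trivial, being the monodromy of the (necessarily trivial) surface bundle $v|_{D_+}$ over a disk. Thus the twisted surface diagram $\SD_{w_0}$ of Proposition~\ref{T:annular SWFs -> twisted SDs} is untwisted, and I set $[(\S,\G)]:=[\SD_{w_0}]$. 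This is independent of the size of $D_\circ$ (shrinking it restricts $w_0$ to a smaller boundary-parallel annular neighbourhood of the critical image, an equivalent annular \swf) and of the representative $v$: an equivalence $F\colon V\ra V'$ over $\check\phi\colon D^2\ra D^2$ carries the critical image of $v$ to that of $v'$, and since $D_+$ and its counterpart for $v'$ are collar neighbourhoods of their critical images on the higher genus side, modifying $\check\phi$ by an ambient isotopy (uniqueness of collar neighbourhoods) lets us assume $\check\phi(A)=A'$, so $F$ restricts to an equivalence of the annular \swfs and Proposition~\ref{T:annular SWFs -> twisted SDs} makes the (untwisted) surface diagrams equivalent.

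Finally I would verify that the two constructions are mutually inverse up to equivalence: the decomposition $D^2=A\cup\overline{D_\circ}$ exhibits $v$ as $w_0$ with $v\inv(\overline{D_\circ})\cong\S\times D^2$ reglued, i.e.\ the forward construction applied to $\SD_{w_0}$, so by the well-definedness just established this returns $[v]$; conversely, building $w_0$ from $(\S,\G)$, gluing on $\S\times D^2$, and deleting a small disk returns an annular \swf equivalent to $w_0$ with twisted surface diagram $(\S,\G,\id)$ up to switching, by the last sentence of the proof of Proposition~\ref{T:annular SWFs <- twisted SDs}. I expect the only genuine work to be the naturality bookkeeping in the ``independence'' clauses — isotoping the base diffeomorphism to respect the annulus decomposition, and matching the gluing ambiguity with the ambiguity that Lemma~\ref{T:boundary diffeomorphisms extend} absorbs — none of which requires input beyond the results already in hand.
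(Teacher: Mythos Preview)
Your proposal is correct and follows exactly the route the paper takes: the paper's ``proof'' is the paragraph immediately preceding the proposition, which simply invokes Theorem~\ref{T:annular SWFs <-> twisted SDs} in the untwisted case, fills $\delp W$ with $\S\times D^2$, and appeals to Lemma~\ref{T:boundary diffeomorphisms extend} for the gluing ambiguity. You have supplied the careful bookkeeping (explicit inverse, independence of the excised disk, matching of equivalences) that the paper leaves to the reader.
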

As mentioned before, when we speak of surface diagrams, we will always mean untwisted surface diagrams. This will not lead to confusion since we will not encounter any twisted surface diagrams anymore.

For a surface diagram $\SD=(\S,\G)$ we denote the corresponding \swf by $w_\SD\colon Z_\SD\ra D^2$ or, by a slight abuse of notation, simply by~$Z_\SD$ with the map to the disk implicitly understood. The boundary of~$Z_\SD$ fibers over~$S^1$ and if this boundary fibration is trivial, then we can \emph{close off} to a \swf over~$S^2$. Recall that Theorem~\ref{T:Williams existence} tells us that we can obtain \emph{all} smooth, closed, oriented 4-manifolds by this process. It is thus of great interest to understand which surface diagrams describe closed 4-manifolds. The following example indicates that this might be a hard problem.

\begin{example}\label{eg:arbitrary monodromy}
Let $\S$ be a closed, orientable surface together with a mapping class $\phi\in\M(\S)$. Then any factorization of $\mu$ into positive Dehn twists yields a Lefschetz fibration over the disk whose boundary can be identified with the mapping torus $\S(\phi)=(\S\times[0,1])/(x,1)\sim(\phi(x),0)$. As in Example~\ref{eg:wrinkled Lefschetz fibrations} we can turn this Lefschetz fibration into a descending \swf without changing the boundary. Thus any surface bundle over the circle (with closed fibers) bounds some descending \swf over the disk and any mapping class can be realized as the monodromy of a surface diagram.
\end{example}

In fact, the situation is very similar to the theory of Lefschetz fibrations. 
Any word in positive Dehn twists (or, equivalently, a finite sequence of \sccs) on a closed, oriented surface determines a Lefschetz fibration over the disk, the boundary fibers over the circle with monodromy given by the product of the Dehn twists and if this monodromy is trivial, then one can close off to a Lefschetz fibration over~$S^2$. 
Just as an arbitrary product of Dehn twists will will not be isotopic to the identity, a surface diagram will not give rise to a \swf over~$S^2$.
The advantage of the Lefschetz setting is the direct control over the boundary.

\subsection{The monodromy of a surface diagram}
	\label{S:monodromy of SDs}
In order to obtain a more intrinsic description of the boundary of~$Z_\SD$ in terms of $\SD$ we need a little detour. 

Let $a,b\subset \S$ be a pair of \sccs in a surface~$\S$ that intersect transversely in a single point. 
We denote by~$\S_a$ and~$\S_b$ the surfaces obtained by surgery on the curves~$a$ and~$b$, respectively. To be concrete, we fix tubular \nbhds $\nu a$ and~$\nu b$ and picture~$\S_a$ (resp.~$\S_b$) as the result of filling in the two boundary components of~$\S\setminus\nu a$ (resp.~$\S\setminus\nu b$) with disks.
By the assumption on intersections we can assume that~$\nu(a\cup b):=\nu a\cup\nu b$ is diffeomorphic to a once punctured torus -- for convenience we will also assume that it has a smooth boundary in~$\S$. 
Observe that~$\S\setminus \nu(a\cup b)$ has one boundary component and is contained in both $\S_a$ and~$\S_b$ as a subsurface.
Furthermore, the closure of~$\nu b\setminus\nu a$ (resp.~$\nu a\setminus\nu b$) is a disk in~$\S_a$ (resp.~$\S_b$).
It follows that, up to isotopy, there is a unique diffeomorphism
	\begin{equation*}
	\kappa_{a,b}\colon\S_a\ra \S_b
	\end{equation*}
which can be assumed to map~$\nu b\setminus \nu a$ onto $\nu a\setminus\nu b$.

\smallskip
Now let $\SD=(\S;\g_1,\dots,\g_l)$ be a surface diagram and consider the associated \swf $w_\SD\colon Z_\SD\ra D^2$.
Then each adjacent pair of curves~$\g_i$ and~$\g_{i+1}$ fits the above situation and we thus get a collection of diffeomorphisms
	\begin{equation*}
	\kappa_{\g_i,\g_{i+1}}\colon \S_{\g_i}\ra \S_{\g_{i+1}}.
	\end{equation*}
Moreover, it follows from the definition of surface diagrams that the composition
	\begin{equation*}
	\mu_\SD
	:=\kappa_{\g_c,\g_1}\circ\kappa_{\g_{c-1},\g_c}\circ \dots\circ\kappa_{\g_1,\g_2}
	\end{equation*}
maps~$\S_{\g_1}$ to itself and it is easy to see that its isotopy class does not depend on any of the implicit choices involved in its definition.

\begin{definition}\label{D:monodromy of SDs}
The mapping class~$\mu_\SD\in\M(\S_{\g_1})$ represented by the diffeomorphism above is called the \emph{monodromy} of~$\SD$.
\end{definition}
This name is justified by the following lemma.

\begin{lemma}\label{T:boundary monodromy}
Let $\SD=(\S,\G)$ be a surface diagram. Then 
the boundary fibration~$(\del Z_\SD,w_\SD)$ can be identified with the mapping torus~$\S_{\g_1}(\mu_\SD)$.
\end{lemma}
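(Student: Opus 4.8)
The plan is to work directly with the handle-decomposition picture from Proposition~\ref{T:handle decompositions from SWFs} and its proof, reading off the boundary fibration from the explicit construction in Proposition~\ref{T:annular SWFs <- twisted SDs}. Recall that $Z_\SD = W_\SD \cup_{\delp W_\SD} \big(\S\times D^2\big)$, where $W_\SD$ is built from the reference system $\RR=\{R_1,\dots,R_c\}$ in $S^1\times[0,1]$ by attaching fold $2$-handles along the vanishing cycles $\g_i$ with the fiber framing. The boundary $\del Z_\SD$ fibers over $\del D^2 = S^1$, and the fibration is the one induced by $w_\SD$; since $\del D^2$ lies in the lower genus region, the fiber is the lower genus surface, which is obtained from $\S$ by surgery on any one of the vanishing cycles. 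So the task is to identify $(\del Z_\SD, w_\SD)$ with $\S_{\g_1}(\mu_\SD)$.

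First I would set up the identification of the lower boundary fiber with $\S_{\g_i}$ for each $i$. Over a small arc of $\del D^2$ transverse to one fold arc, the local fold model says $w_\SD^{-1}$ of that arc is a cobordism from $\S$ to the lower fiber given by a single $3$-dimensional $2$-handle attached along $\g_i$ — i.e.\ the lower fiber over a point just past the $i$-th fold arc is canonically $\S_{\g_i}$, via the surgery description in the proof of Lemma~\ref{T:fold vanishing cycles}. Second, I would compute the monodromy of $(\del Z_\SD, w_\SD)$ by going around $S^1$ and recording how the lower fiber changes as we pass the cusps. The base circle $\del D^2$ is cut by the critical image into $c$ arcs; between consecutive fold arcs there is a cusp, and in a neighborhood of a cusp the region near $\del D^2$ on the lower side is a product (the cusp models a cancelling pair, so after the cancellation the fibration near that corner is a submersion). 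The key point is then the local statement: passing the cusp between the $i$-th and $(i{+}1)$-st fold arcs, the identification of the lower fiber changes from $\S_{\g_i}$ to $\S_{\g_{i+1}}$ precisely by the diffeomorphism $\kappa_{\g_i,\g_{i+1}}$. This is essentially the content of Lemma~\ref{T:adjacent reference paths}: near a cusp, the two adjacent reference paths realize the lower fiber as surgery on $\g_i$ and on $\g_{i+1}$ respectively, the two surgeries being supported in a common once-punctured-torus neighborhood $\nu(\g_i\cup\g_{i+1})$, and the identification of the two surgered surfaces induced by the product structure in the cusp neighborhood is, up to isotopy, the unique diffeomorphism $\kappa_{\g_i,\g_{i+1}}$ (uniqueness being exactly the statement preceding Definition~\ref{D:monodromy of SDs}). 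Composing these $c$ local identifications as we traverse $\del D^2$ once — starting and ending at the fiber just past $\g_1$, hence at $\S_{\g_1}$ — gives the monodromy $\kappa_{\g_c,\g_1}\circ\cdots\circ\kappa_{\g_1,\g_2} = \mu_\SD$, and the last step, crossing the cusp between $\g_c$ and $\g_1$, uses that the circuit is \emph{closed} (trivial switch), so no extra twisting enters. Hence $(\del Z_\SD, w_\SD)\cong\S_{\g_1}(\mu_\SD)$ as fibrations over $S^1$.

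The main obstacle I anticipate is the local analysis at a cusp: one must pin down that the diffeomorphism between the two incarnations $\S_{\g_i}$ and $\S_{\g_{i+1}}$ of the lower fiber, as dictated by the geometry of the cusp model restricted to a neighborhood of the lower-genus boundary, is genuinely $\kappa_{\g_i,\g_{i+1}}$ and not, say, $\kappa_{\g_i,\g_{i+1}}$ post-composed with a Dehn twist about the boundary of $\nu(\g_i\cup\g_{i+1})$ or about $\g_i$ itself. This is where the ``swinging vs.\ Dehn twist'' subtlety of Remark~\ref{R:swinging vs. Dehn twists} and the careful framing bookkeeping in Proposition~\ref{T:handle decompositions from SWFs} will be needed; the cleanest route is probably to trace everything through the explicit model in Step~2 of the proof of Proposition~\ref{T:annular SWFs <- twisted SDs}, where the $\theta$-ray Morse functions make the cobordism structure near the lower boundary fully explicit, and to invoke the uniqueness clause for $\kappa_{a,b}$ to absorb the remaining ambiguity. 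A secondary, more routine point is checking orientation conventions so that the monodromy comes out as $\mu_\SD$ rather than $\mu_\SD^{-1}$.
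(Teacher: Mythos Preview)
Your approach is correct and essentially the same as the paper's: cut the boundary circle at the reference points~$\theta_i$, identify the fiber over~$\theta_i$ with~$\S_{\g_i}$ via the fold cobordism, and argue that the transition from~$\S_{\g_i}$ to~$\S_{\g_{i+1}}$ is~$\kappa_{\g_i,\g_{i+1}}$. The paper resolves the ``extra Dehn twist'' worry you flag by a slightly cleaner device: it extends the reference paths to the center of the disk, observes that the wedge bounded by~$R_i$ and~$R_{i+1}$ carries a trivial subbundle with fiber~$\S\setminus\nu(\g_i\cup\g_{i+1})$, and hence the parallel transport along the boundary arc from~$\theta_i$ to~$\theta_{i+1}$ is the identity on this complement --- which, by the uniqueness clause defining~$\kappa_{\g_i,\g_{i+1}}$, forces it to be exactly~$\kappa_{\g_i,\g_{i+1}}$ with no room for stray twists.
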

\begin{proof}
By the construction of~$w_\SD$ its fiber over the origin is naturally identified with~$\S$.
Furthermore, recall that the annular fibration associated to~$\SD$ is equipped with a reference system whose reference paths we can naturally extend from the annulus to the disk by connecting them to the origin. The result is a collection of reference paths~$R_1,\dots,R_c$ from the origin to the boundary of the disk and we denote its endpoints by~$\theta_1\dots,\theta_c\in S^1$.
Observe that such a reference path, $R_i$~say, gives rise to an identification of the fiber over~$\theta_i$ with the surface~$\S_{\g_i}$ obtained from surgery on~$\g_i$ where~$\g_i$ is the vanishing cycle associated to~$R_i$.

Now consider the region in the base bounded by two adjacent reference path~$R_i$ and~$R_{i+1}$. Using a suitable notion of parallel transport we see that the preimage of this region contains a trivial bundle with fiber~$\S\setminus{\nu(\g_i\cup\g_{i+1})}$. 
In particular, the parallel transport along the boundary segment from~$\theta_i$ to~$\theta_{i+1}$ restricts to the identity on the complement of~$\nu(\g_i\cup\g_{i+1})$ and thus must be isotopic to $\kappa_{\g_i,\g_{i+1}}$ and the claim follows.
\end{proof}

It is also possible to describe the monodromy in terms of the original surface~$\S$.
This takes us on another small detour.
Let $a\subset\S$ be a non-separating \scc in a surface~$\S$ and let~$\M(\S,a)$ denote the subgroup of~$\M(S)$ consisting of all elements that fix~$a$ up to isotopy. 
It is well known that there is a short exact sequence\footnote{For a proof that~$\mathrm{cut}_a$ is well defined see~\cite[Section~7.5]{Ivanov}, the rest follows as in~\cite[Chapter~3]{primer}.}
	\begin{equation}\label{E:cut sequence}
	\xymatrix{
	1 \ar[r] & 
	\scp{\tau_a} \ar[r] & 
	\M(\S,a) \ar[r]^{\mathrm{cut}_a} & 
	\M(\S\setminus a) \ar[r] & 
	1
	}
	\end{equation}
where~$\S\setminus a$ is viewed as a twice punctured surface.
The complement~$\S\setminus a$ can be related to the surgered surface~$\S_a$ as follows.
In~$\S_a$ there is an obvious pair of points, namely the centers of the surgery disks. If we denote by~$\S_a^*$ the surface obtained by marking these points, then~$\S\setminus a$ is canonically identified (at least up to isotopy) with~$\S_a^*$ and thus~$\M(\S\setminus a)$ is canonically isomorphic to~$\M(\S_a^*)$.
Hence, we can define the \emph{surgery homomorphism}
	\begin{equation*}
	\sigma_a\colon \M(\S,a)\ra \M(\S_a)
	\end{equation*}
as the composition
	\begin{equation*}
	\xymatrix{
	\M(\S,a) \ar@/^1pc/[rrr]^{\sigma_a} \ar[r]_{\mathrm{cut}_a} & 
	\M(\S\setminus a) \ar[r]_{\cong} &
	\M(\S_a^*) \ar[r]_{\mathrm{forget}} &
	\M(\S_a)
	}
	\end{equation*}
where the last map is induced by forgetting the marked points in~$\S_a^*$.

Applying this to surface diagram we obtain the following.
\begin{lemma}\label{T:lifting the monodromy}
Let $\SD=(\S;\g_1,\dots,\g_c)$ be a surface diagram. Then 
	\begin{equation*}
	\tilde{\mu}_\SD:=\tau_{\tau_{\g_c}(\g_1)} \circ
		\tau_{\tau_{\g_{c-1}}(\g_c)} \circ
		\tau_{\tau_{\g_1}(\g_2)} \in\M(\S)
	\end{equation*}
is contained in $\M(\S,\g_1)$ and satisfies $\sigma_{\g_1}(\tilde{\mu}_\SD)=\mu_\SD$.
\end{lemma}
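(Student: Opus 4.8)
The plan is to reduce the statement to a single local computation about geometrically dual curves, after introducing a mild generalization of the surgery homomorphism of~\eqref{E:cut sequence}. For non-separating \sccs $a,b\subset\S$ that are related by some diffeomorphism of~$\S$, let $\M(\S;a,b)\subset\M(\S)$ be the set of mapping classes sending~$a$ to~$b$ (a coset of~$\M(\S,a)$), and define a \emph{surgery map} $\sigma_{a,b}$ from $\M(\S;a,b)$ to the set of isotopy classes of orientation-preserving diffeomorphisms $\S_a\ra\S_b$, exactly as~$\sigma_a$ was defined: cut along~$a$, use $\phi(a)=b$ to regard the cut map as a diffeomorphism $\S\setminus a\ra\S\setminus b$, transport it to a map $\S_a^*\ra\S_b^*$ via the canonical identifications, and forget the two marked points. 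Then $\sigma_{a,a}$ is $\sigma_a$, one has the evident naturality $\sigma_{b,c}(\psi)\circ\sigma_{a,b}(\phi)=\sigma_{a,c}(\psi\circ\phi)$, and $\sigma_a(\tau_a)=\id_{\S_a}$ because~$\tau_a$ is supported in~$\nu a$ and so cuts to a map isotopic to the identity.

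The key local lemma is: \emph{if $a,b\subset\S$ are geometrically dual, then $\tau_{\tau_a(b)}\in\M(\S;a,b)$ and $\sigma_{a,b}(\tau_{\tau_a(b)})=\kappa_{a,b}$.} For the first part one uses the standard fact that $\tau_a(b)$ may be isotoped into any one-holed-torus neighbourhood $\nu(a\cup b)$, is geometrically dual to both~$a$ and~$b$, and satisfies $\tau_{\tau_a(b)}(a)=\tau_a\tau_b(a)=b$ up to isotopy; on the level of the torus capping off~$\nu(a\cup b)$ this is immediate from the Picard--Lefschetz formula. Given this, $\sigma_{a,b}(\tau_{\tau_a(b)})$ is defined, and since $\tau_{\tau_a(b)}$ is supported in~$\nu(a\cup b)$ it cuts to a diffeomorphism $\S_a\ra\S_b$ that is the identity on $\S\setminus\nu(a\cup b)$. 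But $\S\setminus\nu(a\cup b)$ has a single boundary circle, and both~$\S_a$ and~$\S_b$ are recovered from it by capping that circle with a disk; as $\pi_0\Diff(D^2,\del D^2)$ is trivial, any orientation-preserving diffeomorphism $\S_a\ra\S_b$ restricting to the identity on $\S\setminus\nu(a\cup b)$ is isotopic to~$\kappa_{a,b}$, which is characterised up to isotopy by this property.

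With the local lemma in hand the proof is a short induction along the circuit. For $1\le k\le c$ put $\tilde{\mu}^{(k)}:=\tau_{\tau_{\g_{k-1}}(\g_k)}\circ\dots\circ\tau_{\tau_{\g_1}(\g_2)}$ and $\mu^{(k)}:=\kappa_{\g_{k-1},\g_k}\circ\dots\circ\kappa_{\g_1,\g_2}\colon\S_{\g_1}\ra\S_{\g_k}$; I claim $\tilde{\mu}^{(k)}\in\M(\S;\g_1,\g_k)$ and $\sigma_{\g_1,\g_k}(\tilde{\mu}^{(k)})=\mu^{(k)}$. The case $k=1$ is trivial. For the inductive step write $\tilde{\mu}^{(k+1)}=\tau_{\tau_{\g_k}(\g_{k+1})}\circ\tilde{\mu}^{(k)}$; since the adjacent curves $\g_k,\g_{k+1}$ are geometrically dual, the local lemma gives $\tau_{\tau_{\g_k}(\g_{k+1})}\in\M(\S;\g_k,\g_{k+1})$, hence $\tilde{\mu}^{(k+1)}\in\M(\S;\g_1,\g_{k+1})$, and naturality together with the local lemma yields $\sigma_{\g_1,\g_{k+1}}(\tilde{\mu}^{(k+1)})=\sigma_{\g_k,\g_{k+1}}(\tau_{\tau_{\g_k}(\g_{k+1})})\circ\sigma_{\g_1,\g_k}(\tilde{\mu}^{(k)})=\kappa_{\g_k,\g_{k+1}}\circ\mu^{(k)}=\mu^{(k+1)}$. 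Finally, since~$\SD$ is a surface diagram its circuit is closed, so~$\g_c$ and~$\g_1$ are geometrically dual and the induction runs one more step with the convention $\g_{c+1}:=\g_1$: this gives $\tilde{\mu}_\SD=\tilde{\mu}^{(c+1)}\in\M(\S;\g_1,\g_1)=\M(\S,\g_1)$ and $\sigma_{\g_1}(\tilde{\mu}_\SD)=\mu^{(c+1)}=\kappa_{\g_c,\g_1}\circ\dots\circ\kappa_{\g_1,\g_2}=\mu_\SD$, which is the assertion.

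The main obstacle is not any of these steps individually but the bookkeeping underlying them: one must check that the several canonical identifications ($\S\setminus a\cong\S_a^*$, the choice of one-holed-torus neighbourhood, and the disk-capping that recovers~$\S_a$ and~$\S_b$) and the orientation and twist-direction conventions are all mutually consistent, so that ``$\tau_a(b)$'' really is the curve that appears and the sign in $\tau_{\tau_a(b)}(a)=b$ is the right one. Once the conventions are pinned down the argument is essentially formal.
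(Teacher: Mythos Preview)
Your proof is correct and follows essentially the same approach as the paper. The paper's argument is terser: it records the single observation $\tau_{\tau_{\g_i}(\g_{i+1})}(\g_i)=\tau_{\g_i}\tau_{\g_{i+1}}\tau_{\g_i}^{-1}(\g_i)=\g_{i+1}$ and then asserts that the evident diagram relating $\S\setminus\g_i\hookrightarrow\S$ with $\S_{\g_i}^*$ commutes up to isotopy, which is exactly your local lemma; your framework of the generalized surgery maps $\sigma_{a,b}$ and their naturality simply makes explicit the bookkeeping that the paper leaves to the reader.
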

\begin{proof}
We claim that this follows from the observation that 
	\begin{equation*}
	\tau_{\tau_{\g_i}(\g_{i+1})}(\g_i)=\tau_{\g_{i}}\tau_{\g_{i+1}}\tau_{\g_{i}}\inv(\g_i)=\g_{i+1}.
	\end{equation*}
Indeed, this obviously implies the first statement and the second follows from the fact that the diagrams
	\begin{equation*}
	\xymatrix{
	\S \ar[d]^{\tau_{\tau_{\g_i}(\g_{i+1})}} &
	\S\setminus\g_{i} \ar[d] \ar[l] \ar[r] &
	\S_{\g_{i}}^* \ar[d]^{\kappa_{\g_i,\g_{i+1}}}
	\\
	\S &
	\S\setminus\g_{i+1} \ar[l] \ar[r] &
	\S_{\g_{i+1}}^*
	}
	\end{equation*}
commute up to isotopy.
\end{proof}

The above makes it interesting to study the map~$\sigma_{\g_1}$ and its kernel. 
\begin{lemma}\label{T:generating MCG ficing a curve}
Let $a\subset\S$ be a non-separating \scc. Then the group~$\M(\S,a)$ is generated by elements of the form~$\tau_c$ where~$i(a,c)=0$ and $\Delta_{a,b}:=(\tau_a\tau_b)^3$ where~$i(a,b)=1$.
\end{lemma}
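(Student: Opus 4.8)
The plan is to exhibit the group $\M(\S,a)$ as an extension, via the cut sequence~\eqref{E:cut sequence}, of the mapping class group of the twice-punctured surface $\S\setminus a$ by the cyclic group $\scp{\tau_a}$, and to produce the desired generators by lifting a convenient generating set of $\M(\S\setminus a)$ and adjoining~$\tau_a$ itself. Since $\tau_a$ is clearly of the form $\tau_c$ with $i(a,c)=0$ (take $c=a$), it suffices to show that $\M(\S,a)$ is generated by $\tau_a$ together with lifts of some standard generators of $\M(\S\setminus a)$, each lift being a product of the two allowed types of elements.

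First I would recall that $\M(\S\setminus a)$, the mapping class group of a twice-punctured surface $\S'$ (with the two punctures possibly permuted), is generated by Dehn twists about non-separating simple closed curves together with, if one wants to allow the punctures to swap, a single half-twist exchanging them; alternatively one can use a Humphries-type generating set. The key geometric point is that any simple closed curve $c'\subset\S\setminus a$ can be isotoped off a neighborhood of $a$, so it is the image under $\mathrm{cut}_a$ of a curve $c\subset\S$ with $i(a,c)=0$, and $\mathrm{cut}_a(\tau_c)=\tau_{c'}$. Thus every Dehn-twist generator of $\M(\S\setminus a)$ lifts to a $\tau_c$ with $i(a,c)=0$. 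The remaining issue is the generator that swaps the two punctures: a puncture-swapping half-twist does not obviously lift to a single Dehn twist, but it can be realized as $\sigma_{a,b}$ where $b$ is a curve dual to $a$ — concretely, cutting $\S$ along $a$ and regluing via the ``rotation'' supported in a neighborhood $\nu(a\cup b)\cong$ once-punctured torus exchanges the two sides of $a$, i.e.\ the two punctures, and this diffeomorphism is exactly (a representative of) $(\tau_a\tau_b)^3 = \Delta_{a,b}$ up to a power of $\tau_a$ — here one uses the classical fact that $(\tau_a\tau_b)^6$ is the boundary twist on $\nu(a\cup b)$ while $(\tau_a\tau_b)^3$ is the hyperelliptic involution on that subsurface, which visibly interchanges the two sides of $a$.

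Assembling these: given $h\in\M(\S,a)$, its image $\mathrm{cut}_a(h)\in\M(\S\setminus a)$ is a product of the generators above; lift each generator to the corresponding $\tau_c$ (for $i(a,c)=0$) or $\Delta_{a,b}$ (for $i(a,b)=1$) in $\M(\S,a)$, multiply these lifts to get $h'\in\M(\S,a)$, and observe that $\mathrm{cut}_a(h^{-1}h')=1$, so by exactness $h^{-1}h'=\tau_a^k$ for some $k\in\Z$. Hence $h = h'\tau_a^{-k}$ lies in the subgroup generated by the claimed elements (noting $\tau_a = \tau_c$ with $c=a$, $i(a,a)=0$), which completes the argument.

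The main obstacle I anticipate is the careful bookkeeping in the puncture-swap step: one must verify that the puncture-exchanging element of $\M(\S\setminus a)$ genuinely lifts into $\M(\S,a)$ (it does, since swapping the punctures corresponds to a homeomorphism fixing $a$ setwise but reversing its orientation, which is still admissible for $\tau_a\in\M(\S,a)$ does not require orientation on $a$), and that this lift can be taken to be $\Delta_{a,b}$ modulo $\scp{\tau_a}$ rather than some more complicated word. This requires a local computation in the once-punctured-torus neighborhood $\nu(a\cup b)$ identifying $(\tau_a\tau_b)^3$ with the involution that swaps the two boundary circles of $\S\setminus\nu a$ — i.e.\ the hyperelliptic involution of the torus — and checking it extends by the identity outside. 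I would relegate this to a short explicit picture or cite the standard chain-relation/hyperelliptic facts (e.g.\ from~\cite{primer}, Chapter~3--4), since the rest of the proof is a routine diagram chase through~\eqref{E:cut sequence}.
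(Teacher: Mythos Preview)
Your approach is essentially the same as the paper's: both use the short exact sequence~\eqref{E:cut sequence} to reduce to lifting a generating set of~$\M(\S\setminus a)$ consisting of Dehn twists and half-twists, observe that Dehn twists lift to twists~$\tau_c$ with~$i(a,c)=0$, and that half-twists lift to~$\Delta$-twists. The paper states the last point tersely (``it is easy to see that each half-twist lifts to a $\Delta$-twist''), whereas you spell out the mechanism via the hyperelliptic involution on the once-punctured torus~$\nu(a\cup b)$, which is a correct and helpful elaboration.
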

We will refer to the mapping classes~$\Delta_{a,b}$ as \emph{$\Delta$-twists}. 
\begin{proof}
It follows from the short exact sequence~\eqref{E:cut sequence} that we can obtain a generating set for~$\M(\S,a)$ by lifting a generating set for~$\M(\S\setminus a)$ and adding the Dehn twist about~$a$.
As a generating set for~$\M(\S\setminus a)$ we can take the collection Dehn twists and so called \emph{half-twists} about simple arcs connecting the two punctures.
Then the Dehn twists in~$\M(\S\setminus a)$ have obvious lifts in~$\M(\S)$ and it is easy to see that each half-twist lifts to a $\Delta$-twist.
\end{proof}

\begin{corollary}\label{T:kernel of surgery homomorphism}
The kernel of the surgery homomorphism~$\sigma_a\colon \M(S,a)\ra \M(\S_a)$ contains the Dehn twist about~$a$ and all $\Delta$-twists involving~$a$.
\end{corollary}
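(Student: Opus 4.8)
The plan is to prove both inclusions by exploiting the factorization $\sigma_a=\mathrm{forget}\circ(\cong)\circ\mathrm{cut}_a$ from the definition of the surgery homomorphism, so that $\ker(\mathrm{cut}_a)\subseteq\ker(\sigma_a)$. The statement for $\tau_a$ is then immediate: the exact sequence~\eqref{E:cut sequence} identifies $\langle\tau_a\rangle$ with $\ker(\mathrm{cut}_a)$, hence $\sigma_a(\tau_a)=1$. So the real content is to show $\sigma_a(\Delta_{a,b})=1$ whenever $i(a,b)=1$.

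First I would check that $\Delta_{a,b}=(\tau_a\tau_b)^3$ actually lies in $\M(\S,a)$, the domain of $\sigma_a$. A regular \nbhd $N:=\nu(a\cup b)$ is a one-holed torus and both $\tau_a,\tau_b$ are supported in~$N$; since $i(a,b)=1$ the braid relation $\tau_a\tau_b\tau_a=\tau_b\tau_a\tau_b$ holds in $\langle\tau_a,\tau_b\rangle\leq\M(\S)$, so $(\tau_a\tau_b)^3$ is central in this subgroup and in particular commutes with $\tau_a$. Using $\phi\tau_a\phi\inv=\tau_{\phi(a)}$ together with the fact that distinct essential \sccs have distinct Dehn twists, commuting with $\tau_a$ forces $\Delta_{a,b}(a)=a$ up to isotopy, i.e.\ $\Delta_{a,b}\in\M(\S,a)$.

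Next I would compute $\sigma_a(\Delta_{a,b})$ geometrically. Take a representative of $\Delta_{a,b}$ supported in~$N$; then $\sigma_a(\Delta_{a,b})\in\M(\S_a)$ is supported in the piece of~$\S_a$ which replaces~$N$, namely the surface obtained from the one-holed torus~$N$ by surgery on~$a$. Removing an annular \nbhd of the non-separating curve~$a$ from~$N$ leaves a pair of pants with boundary $\del N$ and the two circles bounding~$\nu a$, and capping the latter two off with the surgery disks yields a \emph{disk}, carrying the two disk-centers as marked points. Hence, after forgetting the marked points, $\sigma_a(\Delta_{a,b})$ is supported in a disk and is therefore trivial because $\M(D^2,\del D^2)=1$. (If one wishes, one can go further and identify $\mathrm{cut}_a(\Delta_{a,b})$ with the half-twist exchanging the two marked points by invoking the $2$-chain relation $\Delta_{a,b}^2=\tau_{\del N}$, but this refinement is not needed for the conclusion.)

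I do not expect a genuine obstacle: the only step requiring care is the bookkeeping in the last paragraph, namely tracking the condition ``supported in~$N$'' through $\mathrm{cut}_a$ and the identification $\M(\S\setminus a)\cong\M(\S_a^*)$ to the condition ``supported in the surgered disk''. This is a routine consequence of the locality of all three maps composing~$\sigma_a$, so the proof should be short.
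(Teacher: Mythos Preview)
Your proposal is correct. The paper gives no explicit proof of this corollary, treating it as immediate from the proof of the preceding lemma: there it is observed that $\mathrm{cut}_a(\Delta_{a,b})$ is a half-twist about an arc joining the two punctures, and half-twists die under the forget map since they are supported in a disk containing the two marked points; likewise $\tau_a\in\ker(\mathrm{cut}_a)$ by the exact sequence~\eqref{E:cut sequence}.

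Your argument is essentially the same, but packaged more geometrically and self-containedly: rather than first identifying $\mathrm{cut}_a(\Delta_{a,b})$ as a half-twist and then noting that half-twists have disk support in~$\S_a$, you go straight to the observation that $\Delta_{a,b}$ is supported in the one-holed torus~$N=\nu(a\cup b)$, which becomes a disk in~$\S_a$ after surgery on~$a$. This bypasses the need to invoke the half-twist explicitly (as you yourself note), at the small cost of having to separately verify $\Delta_{a,b}\in\M(\S,a)$, which in the paper's route is automatic from the lifting statement. Both approaches amount to the same locality observation, and yours is arguably the cleaner standalone argument.
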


The expert will have noticed that the mapping class~$\tilde{\mu}_\SD$ in Lemma~\ref{T:lifting the monodromy} is simply the monodromy of the boundary of the Lefschetz part of the simplified broken Lefschetz fibration obtained from~$w_\SD$ by unsinking all the cusps.
Of course, there are many different lifts of~$\mu_\SD$ to~$\M(\S)$. 
For example, it follows from the braid relations for the pairs of adjacent curves that
	\begin{align*}
	\tilde{\mu}_\SD
	&= \tau_{\g_1}^{-c} 
		(\tau_{\g_c} \tau_{\g_1})
		(\tau_{\g_{c-1}} \tau_{\g_c})
		\dots 
		(\tau_{\g_1} \tau_{\g_2}) \\
	&= \tau_{\g_1}^{-2c} 
		(\tau_{\g_c} \tau_{\g_1} \tau_{\g_c}) 
		(\tau_{\g_{c-1}} \tau_{\g_c} \tau_{\g_{c-1}})
		\dots 
		(\tau_{\g_1} \tau_{\g_2} \tau_{\g_1})
	\end{align*}
and since~$\tau_{\g_1}$ is contained in the kernel of~$\sigma_{\g_1}$ we obtain two other choices.

\smallskip
We illustrate these mapping class group techniques to produce many examples of surface diagrams with trivial monodromy.
\begin{example}\label{eg:double monodromy}
Given an arbitrary circuit $\G=(\g_1,\dots,\g_l)$ in an oriented surface~$\S$ we can form a closed circuit $D\G:=(\g_1,\dots,\g_{l-1},\g_l,\g_{l-1},\dots,\g_2)$ which we call the \emph{double} of~$\G$.
We claim that the surface diagram~$D\SD:=(\S,D\G)$ has trivial monodromy.
For convenience let us write $\tau_i=\tau_{\g_i}$. 
As explained above the monodromy of~$D\SD$ can be lifted to~$\M(\S)$ as
	\begin{align*}
	\mu&=
		(\tau_2 \tau_1 \tau_2)
		\dots
		(\tau_{l-2} \tau_{l-1} \tau_{l-2})
		(\tau_{l-1} \tau_{l} \tau_{l-1})
		(\tau_{l} \tau_{l-1} \tau_{l})
		(\tau_{l-1} \tau_{l-2} \tau_{l-1})
		\dots
		(\tau_{1} \tau_{2} \tau_{1})\\
	&= (\tau_2 \tau_1 \tau_2)
		\dots
		(\tau_{l-2} \tau_{l-1} \tau_{l-2})
		\Delta_{\g_{l-1},\g_{l}}
		(\tau_{l-1} \tau_{l-2} \tau_{l-1})
		\dots
		(\tau_{1} \tau_{2} \tau_{1}).
	\end{align*}
Our goal is to factor this expression into a sequence of $\Delta$-twists involving~$\g_1$. 
The key observation is that
	\begin{align*}
	&(\tau_{l-2} \tau_{l-1} \tau_{l-2})
	\Delta_{\g_{l-1},\g_{l}}
	(\tau_{l-1} \tau_{l-2} \tau_{l-1}) \\
	=&
	(\tau_{l-2} \tau_{l-1} \tau_{l-2})
	\Delta_{\g_{l-1},\g_{l}}
	(\tau_{l-2} \tau_{l-1} \tau_{l-2}) \\
	=&
	(\tau_{l-2} \tau_{l-1} \tau_{l-2})
	\Delta_{\g_{l-1},\g_{l}}
	(\tau_{l-2} \tau_{l-1} \tau_{l-2})\inv
	\Delta_{\g_{l-2},\g_{l-1}}
	 \\
	=&
	\Delta_{\tau_{l-2} \tau_{l-1} \tau_{l-2}(\g_{l-1}),\tau_{l-2} \tau_{l-1} \tau_{l-2}(\g_{l})}
	\Delta_{\g_{l-2},\g_{l-1}} \\
	=&
	\Delta_{\g_{l-2},\tau_{l-2} \tau_{l-1} \tau_{l-2}(\g_{l})}
	\Delta_{\g_{l-2},\g_{l-1}}.
	\end{align*}
Applying this repeatedly we eventually obtain
	\begin{equation*}
	\mu=
		\Delta_{\g_{1},\delta_{l}}
		\Delta_{\g_{1},\delta_{l-1}}
		\dots
		\Delta_{\g_{1},\delta_{2}}
	\end{equation*}
where $\delta_k:=(\tau_1 \tau_2 \tau_1)\dots(\tau_{k-2} \tau_{k-1} \tau_{k-2}) (\g_k)$.
Hence, the monodromy of~$D\SD$ is trivial by Corollary~\ref{T:kernel of surgery homomorphism}.

If~$\G$ was a closed circuit to begin with so that~$\SD=(\S,\G)$ is a surface diagram, then one can show that~$Z_{D\SD}$ closes off to~$DZ_\SD=Z_\SD\cup_\del\overline{Z_\SD}$, the double of~$Z_\SD$, whence the name.
\end{example}

\subsection{Drawing Kirby diagrams}
	\label{S:Kirby diagrams}
In this section we show how to translate surface diagrams into Kirby diagrams of the associated \swfs. 
For the necessary background we refer the reader to~\cite{GS}. Throughout, we use Akbulut's \emph{dotted circle notation} for 1-handles to avoid ambiguities for framing coefficients.

\subsubsection{Descending \swfs}
Let $w\colon Z\ra D^2$ be a descending \swf of genus~$g$ with surface diagram $\SD=(\S_g;\g_1,\dots,\g_c)$.
Recall that the associated handle decomposition of~$Z$ is obtained from (some handle decomposition of) $\S_g\times D^2$ by attaching 2-handles along~$\g_i\subset\S_g\times\set{\theta_i}$ with respect to the fiber framing where $\theta_1,\dots,\theta_c\in S^1$ are ordered according to the orientation on~$S^1$. 
So in order to draw a Kirby diagram for~$Z$ we need to find a diagram for~$\S\times D^2$ in which the fibers of the boundary should be as clearly visible as possible.

A convenient choice is the diagram shown in Figure~\ref{F:surface times disk 1} which is induced from the obvious handle decomposition of~$\S_g$ with one 0-handle, $2g$~1-handles and one 2-handle.
One fiber of $\S_g\times S^1$, which we identify with~$\S_g$, is clearly visible and the canonical generators $a_1,b_1,\dots,a_g,b_g$ for~$H_1(\S_g)$ are also indicated. We have chosen the orientations such that~$\scp{a_i,b_i}_{\S_g}=1$. 
Another advantage of this picture is that the fiber framing agrees with the blackboard framing. One minor drawback is that the picture does not immediately show \emph{all} fibers of $\S_g\times S^1$ but only an interval worth of them (just thicken the surface a little).
However, this is actually enough for our purposes since we only need the fibers over the interval~$[\theta_1,\theta_c]\subset S^1$.
To get the orientations right we require that the orientation of the fiber agrees with the standard orientation of the plane and, according to the ``fiber first convention'', the positive $S^1$-direction points out of the plane.
	\begin{figure}[h]
	\includegraphics{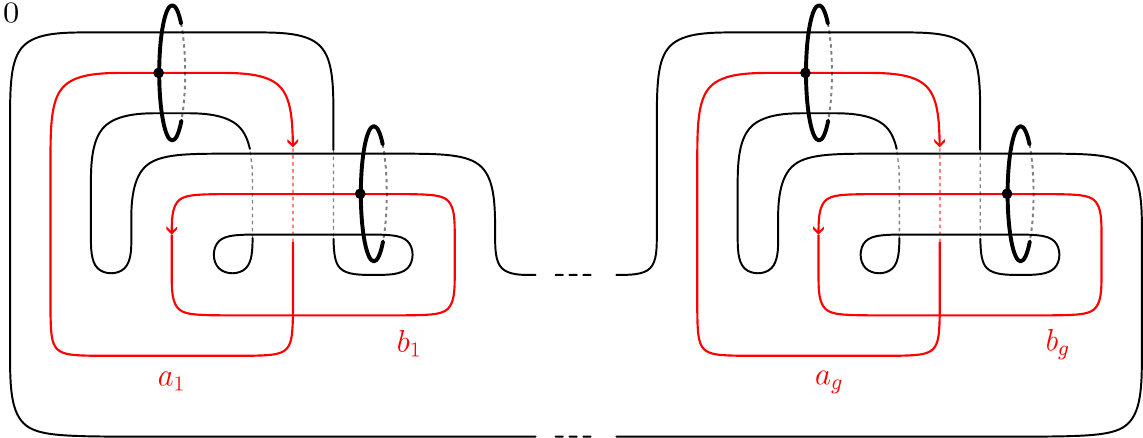}
	\caption{A diagram for $\S_g\times D^2$ where fiber and blackboard framing agree. The red curves show a basis for $H_1(\S_g)$.}
	\label{F:surface times disk 1}
	\end{figure}

\smallskip
With this understood, it is easy to locate the attaching curves of the fold handles in the diagram and it remains to determine their framing coefficients. More generally, we can describe the linking form of the link corresponding to the fold handles.
It should be no surprise that the framing and linking information in the diagram depends on our choice of the handle decomposition for~$\S_g$.

Let~$\g\subset\S_g$ be a \scc. After choosing an orientation its homology class $[\g]\in H_1(\S)$ can be expressed as
	\begin{equation*}
	[\g]=\sum_{i=1}^g \big( n_{a_i}(\g)\,a_i + n_{b_i}(\g)\,b_i \big).
	\end{equation*}
We identify~$\S_g$ with~$\S_g\times\set{0}$ and, by a slight abuse of notation, we continue to denote the canonical push-off of~$\g$ to~$\S_g\times\set{z}$, $z\in D^2$, by~$\g$.

\begin{lemma}\label{T:framings and linking}
For a \scc $\g\subset\S_g\times\set{\theta}$, $\theta\in [\theta_1,\theta_c]\subset S^1$, the framing coefficient of the fiber framing in Figure~\ref{F:surface times disk 1} is given by 
	\begin{equation}\label{E:fiber framing}
	\mathrm{fr}(\g)=  \sum_{i=1}^g n_{a_i}(\g) n_{b_i}(\g).
	\end{equation}
Furthermore, if $\g\subset\S_g\times\set{\theta}$ and $\g'\subset\S_g\times\set{\theta'}$, $\theta,\theta'\in[\theta_1,\theta_c]$, are two oriented \sccs, then their linking number in Figure~\ref{F:surface times disk 1} is
	\begin{equation}\label{E:linking}
	\begin{split}
	\mathrm{lk}(\g,\g')
		=&\frac{1}{2} \mathrm{sgn}(\theta-\theta') \big\langle \g,\g' \big\rangle \\
		 &+\frac{1}{2} \sum_{i=1}^g \big[ n_{a_i}(\g)n_{b_i}(\g')+n_{a_i}(\g')n_{b_i}(\g) \big]
	\end{split}
	\end{equation}
where $\scp{\g,\g'}$ is the algebraic intersection number of $\g$~and~$\g'$ in~$\S_g$ and $\mathrm{sgn}$ denotes the sign of a real number\footnote{To avoid any confusion, we use the convention that $\mathrm{sgn}(0)=0$.}.
\end{lemma}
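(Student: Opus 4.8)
The plan is to compute both quantities by a direct but careful analysis of the standard diagram for $\S_g\times D^2$ in Figure~\ref{F:surface times disk 1}. The key point is that in this diagram the fiber framing coincides with the blackboard framing, so both the self-framing of a pushed-off curve $\g$ and the linking number of two such curves can be read off from planar diagrammatics once one knows how $\g$ sits with respect to the $2g$ one-handles. First I would record that, since $\g$ is a simple closed curve in $\S_g$, its homology class determines its homotopy class up to the action of the Torelli group, and since linking/framing in the boundary of the $4$-dimensional handlebody $\S_g\times D^2$ only see the homology class of the curve in $\S_g$ together with the $S^1$-coordinate, it suffices to establish the formulas for a convenient representative in each homology class; concretely, one can take $\g$ to be a ``standard'' curve that runs $n_{a_i}(\g)$ times over the $i$-th $a$-handle and $n_{b_i}(\g)$ times over the $i$-th $b$-handle, drawn so that the only crossings in the blackboard diagram are the ``expected'' ones. (Alternatively, and perhaps more cleanly, one argues that both sides of \eqref{E:fiber framing} and \eqref{E:linking} are bilinear/quadratic in the coordinate vectors $n_{\bullet}(\g)$ and then checks the formulas on the generators $a_i, b_i$ and their sums, which is the approach I would actually write up.)

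For the framing formula \eqref{E:fiber framing}: I would first check that $a_i$ and $b_i$, pushed off into a nearby fiber, are unknots with blackboard framing $0$, and that $a_i$ is unlinked from $a_j, b_j$ for $j\neq i$ while $\mathrm{lk}(a_i, b_i)=\pm\tfrac12$ in the appropriate sense of the half-integer pairing of distinct fibers — more precisely I would first establish \eqref{E:linking} and then deduce \eqref{E:fiber framing} by the standard identity $\mathrm{fr}(\g) = \mathrm{lk}(\g, \g^+)$ where $\g^+$ is $\g$ pushed slightly in the positive $S^1$-direction, i.e.\ take the limit $\theta' \to \theta^-$ in \eqref{E:linking}: the first term $\tfrac12\,\mathrm{sgn}(\theta-\theta')\scp{\g,\g'}$ contributes $\tfrac12\scp{\g,\g}=0$ since the intersection form is alternating, and the symmetric sum collapses to $\sum_i n_{a_i}(\g)n_{b_i}(\g)$. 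So the framing formula is really a corollary of the linking formula and I would organize the proof that way.

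For the linking formula \eqref{E:linking}: the term $\tfrac12\,\mathrm{sgn}(\theta-\theta')\scp{\g,\g'}$ accounts for the crossings created by the fact that the two curves live in different fibers — as one curve is pushed past the other in the $S^1$-direction, each geometric intersection point of the underlying curves in $\S_g$ becomes a crossing whose sign is $\mathrm{sgn}(\theta-\theta')$ times the local intersection sign, and passing all the way around would produce $\scp{\g,\g'}$ full crossings, so a half-turn's worth is the stated half-integer (this is exactly the ``$\tfrac12$'' familiar from pushing one parallel copy of a surface off another). The symmetric sum $\tfrac12\sum_i[n_{a_i}(\g)n_{b_i}(\g') + n_{a_i}(\g')n_{b_i}(\g)]$ records the crossings inside the $1$-handle region: each strand of $\g$ over the $a_i$-handle must cross each strand of $\g'$ over the $b_i$-handle (and symmetrically), contributing $n_{a_i}(\g)n_{b_i}(\g') + n_{a_i}(\g')n_{b_i}(\g)$ signed crossings, which contribute half that to the linking number. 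I would verify the signs and the exact bookkeeping on the basis curves $a_i, b_i$, where everything is a short explicit computation, and then invoke bilinearity.

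The main obstacle I anticipate is not any single hard step but the sign bookkeeping: getting the orientation conventions consistent between the orientation of the fiber (standard orientation of the plane), the ``fiber-first'' convention fixing the positive $S^1$-direction out of the plane, the chosen signs $\scp{a_i,b_i}_{\S_g}=1$, and the blackboard-framing conventions, so that the $\mathrm{sgn}(\theta-\theta')$ and the $\tfrac12$-factors come out exactly as stated rather than off by a sign. I would handle this by pinning down the conventions once on the pair $(a_1, b_1)$ in a genus-one subpicture and then asserting the general case by the bilinearity/naturality argument above, rather than re-deriving signs strand by strand in the general diagram.
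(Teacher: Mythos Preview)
Your approach is correct and rests on the same geometric observation as the paper's proof: in Figure~\ref{F:surface times disk 1} the fiber framing is the blackboard framing, so everything reduces to a signed crossing count, and the crossings are of exactly the two types you describe (those forced by strands passing over both the $a_i$- and $b_i$-handles, and those coming from intersection points of $\g$ and $\g'$ in the fiber, resolved according to which $S^1$-level is ``on top'').

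The paper, however, is more direct than you are. It simply counts crossings once and for all for an arbitrary curve: the writhe of $\g$ picks up one signed crossing for each time $\g$ runs over an $a_i$-handle paired with each time it runs over the corresponding $b_i$-handle, giving $\sum_i n_{a_i}(\g)n_{b_i}(\g)$ immediately; the linking number is then half the signed crossings, and the extra crossings coming from fiber intersections contribute exactly $\tfrac12\,\mathrm{sgn}(\theta-\theta')\scp{\g,\g'}$ since each intersection point produces one crossing whose sign is the intersection sign times $\mathrm{sgn}(\theta-\theta')$. There is no bilinearity reduction, no ``check on generators and extend'', and the framing formula comes first rather than as a limit of the linking formula. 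Your route via bilinearity is valid, but it obliges you to justify that the diagrammatic linking number depends only on the homology classes in $\S_g$ (your Torelli remark does not do this; what is needed is that homologous curves in the visible fiber $\S_g\setminus D^2\subset S^3$ cobound a $2$-chain there, disjoint from any parallel fiber), and this justification is work the direct count simply avoids.

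One small point: your heuristic for the $\tfrac12$ in the first term (``a half-turn's worth'') is not the right picture. Nothing is being moved halfway around; the $\tfrac12$ is just the standard $\mathrm{lk}=\tfrac12(\text{signed crossings})$, applied to the crossings created by the transverse fiber intersections.
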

\begin{proof}
First observe that $\g\subset \S_g\times\set{\theta}$ can be isotoped off the 2-handle to be completely visible in Figure~\ref{F:surface times disk 1} and, since the fiber framing and blackboard framing agree, its framing coefficient is given by its writhe in the diagram, i.e.~the signed count of crossings with some chosen orientation.
From the way the diagram is drawn it is clear that each crossing is caused by~$\g$ running over~$a_i$ \emph{and}~$b_i$ for some~$i$ 
and that their signed sum is given by the right hand side of~\eqref{E:fiber framing}.

The statement about linking numbers follows from a similar count of crossings. 
Recall that the linking number of two oriented knots can be computed from any link diagram as half of the signed number of crossings. 
The second term on the right hand side of~\eqref{E:linking} arises just as above. However, the first term deserves some explanation. 
Each (transverse) intersection point of $\g$~and~$\g'$ in~$\S_g$ contributes a crossing in the diagram. Now, the sign of the crossing depends one two things: the sign of the intersection point and the information which strand is on top in the diagram. 
From Figure~\ref{F:intersections and crossings} we see that the contribution of each crossing is exactly as in~\eqref{E:fiber framing}.
	\begin{figure}
	\includegraphics[scale=.8]{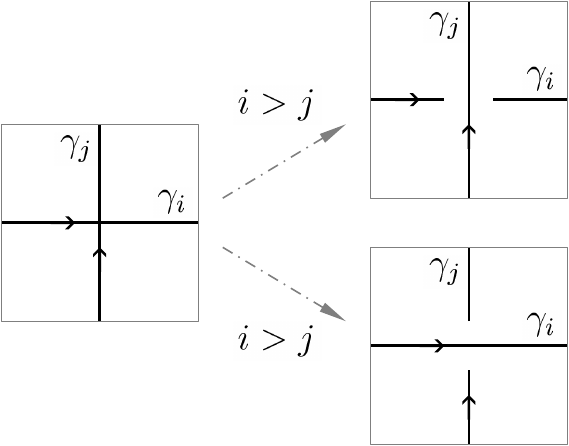}
	\caption{An intersection in a surface diagram and its crossing in the Kirby diagram.}
	\label{F:intersections and crossings}
	\end{figure}
\end{proof}

\begin{remark}\label{R:intersection form}
Formula~\ref{E:linking} can be used to obtain a description of the intersection form of the 4-manifold~$Z_\SD$ described by a surface diagram~$\SD$ which only uses the data in~$\SD$.
Moreover, since~\ref{E:linking} only depends on the homology classes of the curves in~$\SD$, so do the intersection form and, in particular, the signature of~$Z_\SD$.
We will return to this observation in a future publication.
\end{remark}

The diagrams of \swfs derived from Figure~\ref{F:surface times disk 1} are good for abstract reasoning, however, in practice it is convenient to start with a cleaner diagram for~$\S_g\times D^2$ such as the one shown in Figure~\ref{F:surface times disk 2}. 
In this picture, the fiber appears as the boundary sum of regular \nbhds of the basis curves~$\set{a'_i,b_i}_{i=1}^{g}$ which, in turn, appear as meridians to the dotted circles.
	\begin{figure}[h]
	\centering\includegraphics{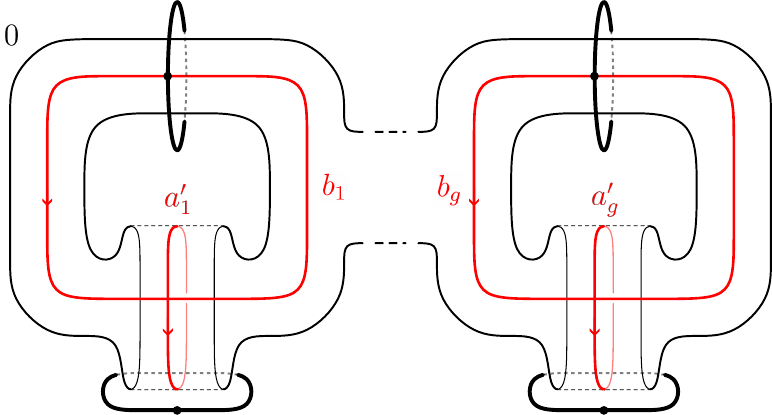}
	\caption{A cleaner diagram of $\S_g\times D^2$.}
	\label{F:surface times disk 2}
	\end{figure}
The framing coefficient of the fiber framing for \sccs on a fiber in Figure~\ref{F:surface times disk 2} can be computed as follows.
It is not hard to see that Figure~\ref{F:surface times disk 2} is obtained from Figure~\ref{F:surface times disk 1} by a sequence of 1-handle slides and an isotopy of the 2-handle and vice versa. 
Note that these moves do not change the framing coefficients of any other 2-handles that might have been around.
Moreover, during the moves, the $b$-curves remain fixed, while the $a$-curves undergo some changes. 
When pulling~$a_i'$ in Figure~\ref{F:surface times disk 2} back to Figure~\ref{F:surface times disk 1} one obtains a representative for the element 
	\begin{equation*}
	[a_1,b_1]\ast\dots\ast[a_{i-1},b_{i-1}]\ast a_i\in\pi_1(\S_g)
	\end{equation*}
where $[x,y]=xyx\inv y\inv$. 
The important observation is that while this curve is not isotopic to~$a_i$ it does represent the same homology class. 
As a consequence, formula~\eqref{E:fiber framing} can be used for Figure~\ref{F:surface times disk 2} with~$a_i$ replaced by~$a_i'$.

\subsubsection{Closing off and the last 2-handle}
	\label{S:the last 2-handle}

Recall that our motivation comes from Williams' theorem that all closed, oriented 4-manifolds admit \swfs over~$S^2$. 
We have seen that these can be described (up to equivalence) by surface diagrams with trivial monodromy and we have already mentioned that it is in general not easy to check whether the monodromy of a given surface diagram is trivial. But the situation is even worse.
Say that we know for some reason that a given surface diagram has trivial monodromy and let us also assume that the genus is at least three so that there are no gluing ambiguities. Even in this case it is not clear at all how the surface diagram encodes the information to complete the Kirby diagram.

To be more precise, let~$w\colon X\ra S^2$ be a \swf with surface diagram~$\SD$. Let~$\nu\S_-$ be a \nbhd of a lower genus fiber and let~$Z:=X\setminus\nu\S_-$.
Then~$w$ restricts to a descending \swf on~$Z$ and~$\del Z$ can be identified with $\S_-\times S^1$ so that~$\SD$ must have trivial monodromy. 
We can draw a Kirby diagram for~$Z$ as described in the previous section and to complete it to a diagram for~$X$ we have to understand how to glue~$\nu\S_-$ back in.

We can choose a handle decomposition for $\nu\S_-$ with one 0-handle, $2g(\S_-)$ 1-handles and one 2-handle. 
Turning this upside down results in a relative handle decomposition on~$\del Z\cong \S_-\times S^1$ with one 2-handle, $2g(\S_-)$ 3-handles and a 4-handle.
The general theory tells us that the 3-~and 4-handles attach in a standard way once we know how to attach the 2-handle. 
Unfortunately, it turns out to be rather difficult to locate this \emph{last 2-handle} in the Kirby diagram for~$Z$.

Our knowledge about the last 2-handle is a priori limited to the following observation. 
If we identify~$\nu\S_-$ with~$\S_-\times D^2$, then the attaching curve of the last 2-handle corresponds to~$\set{p}\times \del D^2$ for some~$p\in\S_-$. 
In particular, we see that it must be attached along a section of the boundary fibration~$(\del Z,w)$. 

\begin{remark}\label{R:closing off by Kirby moves}
Given a surface diagram~$\SD$ with trivial monodromy, there is a general method for finding possible last 2-handles for~$Z_\SD$ which is not very conceptual but still useful in some situations.\footnote{Compare Chapter~8.2 in~\cite{GS} (p.~299f) for the Lefschetz case.}
One considers a Kirby diagram for~$Z_\SD$ as a surgery diagram for~$\del Z_\SD$ and performs  (3-dimensional) Kirby moves until the fibration structure is clearly visible as~$\S_-\times S^1$.
In such a diagram it is easy to locate the possible attaching curves for last 2-handles. One can then pull back these curves to the original diagram by undoing the moves and dragging the curves along.
\end{remark}

Just as in the Lefschetz case, the situation becomes easier if one knows that~$Z_\SD$ can be closed off to a fibration over~$S^2$ which admits a section. The proof of the following lemma is the same as in the Lefschetz case and we refer the reader to~\cite{GS}.

\begin{lemma}\label{T:closing off with section}
Let $w\colon X\ra S^2$ be a \swf with surface diagram~$\SD$.
If~$w$ admits a section of self-intersection~$k$, then the last two handle appears in the diagram for~$Z_\SD$ as a $k$-framed meridian of the 2-handle corresponding to the fiber.
Furthermore, if~$\SD$ is a surface diagram and a meridian as above can be used to attach the last 2-handle, then the corresponding \swf admits a section of self-intersection~$k$.
\end{lemma}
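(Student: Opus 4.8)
The plan is to transcribe the argument for Lefschetz fibrations (Section~8.2 in \cite{GS}) almost verbatim; the only genuinely new ingredient is the observation that a generic point of a fiber lies off all the vanishing cycles, so the fold handles play no role.

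For the forward direction, suppose $\sigma\subset X$ is a section of square~$k$. Set $Z:=X\setminus\nu\S_-$, so that $w|_Z$ is the descending \swf $Z_\SD$ and $\del Z\cong\S_-\times S^1$. Choose the tubular \nbhd $\nu\S_-\cong\S_-\times D^2$ compatibly with both $w$ and $\sigma$, i.e.\ so that $w|_{\nu\S_-}$ is the projection $\S_-\times D^2\ra D^2$ and $\sigma\cap\nu\S_-=\{p\}\times D^2$ for some $p\in\S_-$. Then $\sigma_0:=\sigma\cap Z$ is a properly embedded disk with $\del\sigma_0=\{p\}\times S^1$, a section of the boundary fibration $(\del Z,w)$, and $\sigma_1:=\sigma\cap\nu\S_-$ is a disk with the same boundary. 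Turning $\nu\S_-=\S_-\times D^2$ upside down exhibits $X$ as $Z_\SD$ with one $2$-handle, namely a \nbhd of $\sigma_1$, attached along $\{p\}\times S^1$, followed by $2g(\S_-)$ $3$-handles and a $4$-handle; as recalled above the last two families attach in the standard way once the $2$-handle is fixed, so it remains only to locate this $2$-handle and compute its framing.

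For its location, work in the Kirby picture of $Z_\SD$ built on $\S_g\times D^2$ as in Figure~\ref{F:surface times disk 1}. A generic point $p\in\S_g$ lies in the interior of the $2$-cell of $\S_g$ (whose product with $D^2$ is the $2$-handle corresponding to the fiber), so the disk $\{p\}\times D^2$ meets its core transversely in one point, and --- choosing $p$ off the finitely many vanishing cycles $\g_i$ and their push-offs --- $\{p\}\times D^2$ is disjoint from every fold handle. Hence the attaching circle $\{p\}\times S^1$ of the last $2$-handle is a meridian of the $2$-handle corresponding to the fiber. For the framing, the normal bundle $\nu\sigma\ra\sigma\cong S^2$ is trivialized over $\sigma_0$ by the fiber direction of $Z$ --- which in Figure~\ref{F:surface times disk 1} is precisely the blackboard framing of the meridian, i.e.\ the $0$-framing --- and over $\sigma_1$ by the fiber direction of $\nu\S_-$; the Euler number of $\nu\sigma$, which is $[\sigma]^2=k$, equals the difference between these two framings measured along $\del\sigma_0=\del\sigma_1$. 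Thus the last $2$-handle is a $k$-framed meridian of the $2$-handle corresponding to the fiber.

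Conversely, suppose $\SD$ is a surface diagram and a $k$-framed meridian $m$ of the fiber $2$-handle is used to close $Z_\SD$ off to a \swf $w\colon X\ra S^2$. Isotope $m$ to $\{p\}\times S^1$ with $p\in\S_g$ off all vanishing cycles; then $\sigma_0:=\{p\}\times D^2\subset\S_g\times D^2\subset Z_\SD$ is a section of $w_\SD$ over the base disk with $\del\sigma_0=m$, and the cocore disk $\sigma_1$ of the handle $m$ --- which becomes $\{p\}\times D^2$ once the $3$- and $4$-handles of $\nu\S_-$ are glued back on --- completes $\sigma_0$ to a sphere $\sigma:=\sigma_0\cup_m\sigma_1$ that is a section of $w$; the framing bookkeeping above, read backwards, gives $[\sigma]^2=k$. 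I expect this framing computation to be the only real (and still mild) obstacle: one must be careful that ``$k$-framed'' is measured against the fiber framing of Figure~\ref{F:surface times disk 1}, equivalently the blackboard framing of the meridian, and must identify the clutching difference of the two natural trivializations of $\nu\sigma$ with the self-intersection number.
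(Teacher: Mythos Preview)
Your proof is correct and follows exactly the approach the paper intends: the paper itself gives no argument and simply refers to the Lefschetz case in~\cite{GS}, and you have carried out precisely that transcription, with the one necessary adaptation that the fold handles (unlike Lefschetz handles) are attached away from a generic fiber point~$p$ and hence do not interfere with the meridian. The framing computation via the clutching of the two trivializations of~$\nu\sigma$ is the standard one.

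One small remark on the converse: your claim that $\sigma_0=\{p\}\times D^2$ is a section of~$w_\SD$ over the \emph{full} base disk is slightly imprecise as stated, since $\S_g\times D^2$ in the handle picture only accounts for the higher-genus region. What makes this work is that, because $p$ lies off every vanishing cycle, the constant section at~$p$ extends across the annular region containing the folds and cusps (equivalently, the identification of the handlebody with~$Z_\SD$ carries $\{p\}\times D^2$ to an honest section). This is the same point one glosses over in the Lefschetz case and is easily filled in; it is not the framing computation that needs care, but this extension.
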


In order to illustrate Remark~\ref{R:closing off by Kirby moves} and Lemma~\ref{T:closing off with section} as well as our method of drawing Kirby diagrams we give an example which is also a warm up for the next section.

\begin{example}\label{eg:Kirby diagram example}
Let $a,b\subset\S_g$ be a geometrically dual pair of \sccs. We claim that $\SD=(\S_g;a,\tau_b(a),b)$ is a surface diagram for $\S_{g-1}\times S^2\#\CPbar$.
	\begin{figure}
	\includegraphics{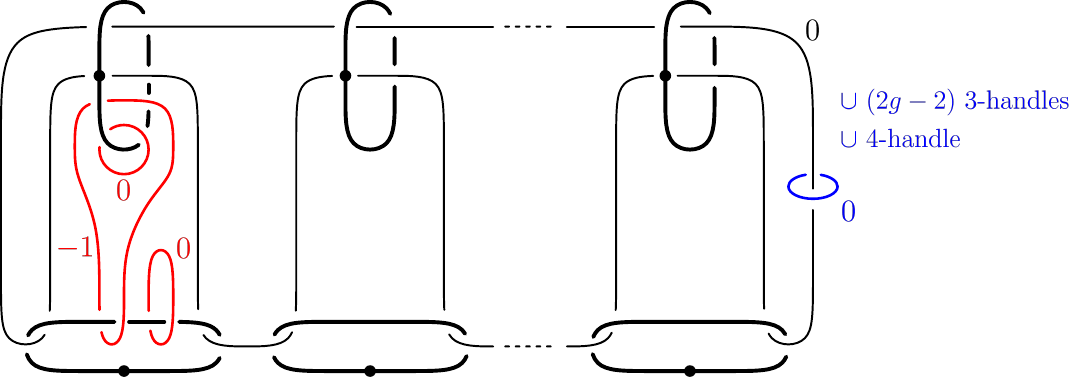}
	\caption{Manifolds with surface diagram~$(\S_g;a,\tau_b(a),b)$}
	\label{F:blow-up preview}
	\end{figure}
We can assume that~$a$ and~$b$ are the standard generators~$a_1$ and~$b_1$ in Figure~\ref{F:surface times disk 2} and Figure~\ref{F:blow-up preview} shows the final Kirby diagram. 
In order to see how we got there let us first ignore all the blue components. What is left is just the Kirby diagram for~$Z_\SD$. The framings on the fold handles can either be computed using Lemma~\ref{T:framings and linking} (together with Proposition~\ref{T:Picard-Lefschetz formula}) or by hand\footnote{The curve is simple enough to draw a parallel push-off in the fiber direction and compute the linking number}.
We now perform the obvious handle moves: using the meridians to the two 1-handles on the left we first unlink the $-1$-framed fold handle (corresponding to~$\tau_b(a)$) to obtain a $-1$-framed unknot isolated from the rest of the diagram, then we unlink the black 2-handle (corresponding to the fiber) and finally cancel the 1-handles and their meridians. 
Obviously, the thus obtained diagram shows $\S_{g-1}\times D^2 \# \CPbar$ and the boundary is clearly visible as~$\S_{g-1}\times S^1$. Moreover, it is easy to see that the last 2-handle can be attached along a 0-framed meridian to the fiber 2-handle and the resulting manifold is~$\S_{g-1}\times S^2 \# \CPbar$ as claimed. 
Finally, since we attached the last 2-handle in a region that was not affected by the Kirby moves it will not change when we undo the moves again and we arrive at Figure~\ref{F:blow-up preview}. Lemma~\ref{T:closing off with section} then tells us that the corresponding \swf will have a section of self-intersection zero.

Note that for~$g\geq 3$ the way we have attached the last 2-handle is unique. In the lower genus cases there are more options. However, in any case one will end up with a blow-up of some surface bundle over~$S^2$.
\end{example}

\subsection{Relation to broken Lefschetz fibrations}
	\label{S:relation to BLFs}

Let $w\colon X\ra B$ be a \swf. After trading all the cusps for Lefschetz singularities by applying Lekili's unsinking modification we obtain a broken Lefschetz fibration
	\begin{equation*}
	\beta_w\colon X \ra B
	\end{equation*}
with one round singularity, smoothly embedded in the base, and all its Lefschetz points on the higher genus side. 
If the base is the sphere or the disk, then~$\beta_w$ is a \emph{simplified broken Lefschetz fibration} in the sense of~\cite{Baykur2} and thus induces another handle decomposition of~$X$.

\smallskip
In order to relate these two handle decompositions, let us briefly review how a handle decomposition is obtained from a simplified broken Lefschetz fibration~$\beta\colon X\ra B$.
Much in the spirit of \swfs one chooses a reference point in the higher genus region together with a collection of disjointly embedded arcs $L_1,\dots,L_k,R\subset B$, where $k$ is the number of Lefschetz singularities, emanating from the reference point such that each~$L_i$ ends in a Lefschetz point and~$R$ passes through the round singularity once. Such a system of arcs is known as a \emph{Hurwitz system} for~$\beta$.
The arcs in a Hurwitz system then give rise to \sccs in the reference fiber~$\S$ to which we shall refer to as the \emph{Lefschetz vanishing cycles}~$\lambda_1,\dots,\lambda_k\subset\S$ and the \emph{round vanishing cycle}~$\rho$.
A handle decomposition of~$X$ is then given as follows:
\begin{itemize}
	\item Start with $\S\times D^2$
	\item Going around $S^1$ attach a \emph{Lefschetz handle} along the~$\lambda_i$ pushed off into fibers over~$S^1$, i.e.~2-handles with framing~$-1$ \wrt the fiber framing
	\item Attach a \emph{round 2-handle} along $\rho$
\end{itemize}
The round 2-handle decomposes into a 2-handle and a 3-handle such that the 3-handle goes over the 2-handle geometrically twice and the 2-handle is attached along~$\rho$ \wrt the fiber framing. (For more details see~\cite{Baykur2}.)

\smallskip
Now let $w\colon X\ra B$ be a \swf and let $\beta_w$ be the associated simplified broken Lefschetz fibration.
Given a reference system $\mcR=\set{R_i}$ for~$w$ with associated surface diagram $(\Sigma,\G)$ there is a canonical Hurwitz system for~$\beta_w$. Since the unsinking homotopy is supported near the cusps we can assume that the nothing happens around the reference paths. Now observe that the arcs~$R_i$ cut the higher genus region into triangles each containing a single Lefschetz singularity of~$\beta_w$. Thus, up to isotopy, there is a unique arc $L_i$ in the triangle bounded by $R_i$ and $R_{i+1}$ going from the reference fiber to the Lefschetz singularity and for the round singularity we take the arc $R=R_1$. 
According to Lekili~\cite{Lekili}, the vanishing cycles of~$\beta_w$ \wrt this Hurwitz system are given by
	\begin{equation*}
	\lambda_i=\tau_{\g_i}(\g_{i+1}) \quad\text{and}\quad \rho=\g_1.
	\end{equation*}
We can go from the handle decomposition induced by~$\beta_w$ to the one induced by~$w$ using the following handlebody interpretation of the (un-)sinking deformation.
	\begin{figure}
	\includegraphics{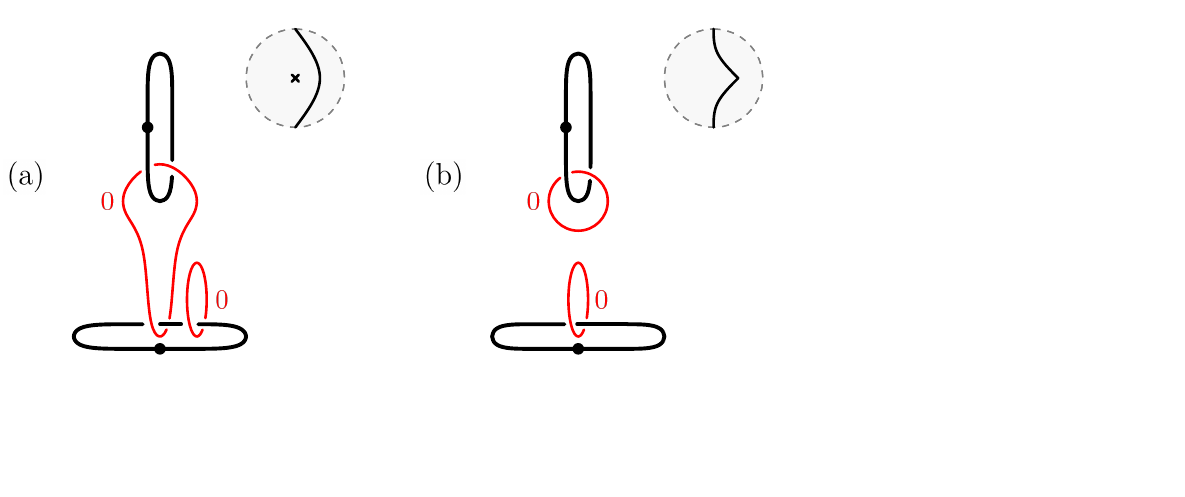}
	\caption{A Lefschetz singularity (a) before and (b) after sinking.}
	\label{F:SWF vs BLF}
	\end{figure}

Assume that we have a Lefschetz singularity next to a fold arc that is \emph{sinkable},~i.e. the Lefschetz and fold vanishing cycles intersect in one point. (In other words, it is the resulting of unsinking a cusp.)
In terms of handle decompositions the situation before and after the sinking process is locally described in Figure~\ref{F:SWF vs BLF}.\footnote{These handle decompositions have already appeared in a disguised form in~\cite{Lekili}.}
(The Lefschetz 2-handle in~(a) is the one that goes over both 1-handles. One readily checks that it is correctly framed.)
Clearly, both pictures describe a 4-ball and they are related by an obvious 2-handle slide.
Indeed, to go from~(a) to~(b) one has to slide the Lefschetz handle over the fold handle in such a way that it unlinks from the lower 1-handle.
Note that his handle slide is compatible with the fibration structures in the sense that the attaching curves stay on the fibers. Moreover, it mysteriously adjusts the framings exactly as needed. 

\begin{remark}\label{R:wrinkling via handles}
Although the handle slide described above seems to be a correct interpretation of Lekili's (un-)sinking deformation it is a priori not obvious why this should be true.
In fact, the deformation is a combination of wrinkling, merging and flipping (see~\cite{Lekili}, Figure~8) and does not seem very atomic. On the other hand, the handle slide is an atomic modification of the handlebodies. It would be interesting to see a 1-parameter family of Morse functions associated with the (un-)sinking deformation that would exhibit the handle slide.
\end{remark}

This shows that, if we start we the handle decomposition of~$\beta_w$, then sliding~$\lambda_1$ over~$\rho=\g_1$ produces a fiber framed attaching curve~$\lambda_1'$ which is isotopic to~$\g_2$. 
Successively sliding~$\lambda_{i}$ over~$\lambda_{i-1}'\sim\g_i$ results in fiber framed attaching curves~$\lambda_{i}'$ isotopic to~$\g_{i+1}$.
Altogether we end up with fiber framed curves~$\lambda_1',\dots,\lambda_c',\rho$. 
The final observation is that~~$\lambda_c'$ is isotopic to~$\rho=\g_1$ and can be unlinked and isolated from the rest of the diagram to form a zero framed unknot which cancels the 3-handle coming from the round singularity.
What we are left with is the decomposition associated to~$w$.

\section{Substitutions}
	\label{S:substitutions}
Let $\SD=(\S,\G)$ be a surface diagram and let $\Lambda$ be a subcircuit of $\G$. If $\Lambda'$ is any circuit that starts and ends with the same curves as~$\Lambda$, then we can build a new surface diagram $(\S,\G')$ where $\G'$ is obtained by replacing $\Lambda$ with $\Lambda'$. 
We call this operation a \emph{substitution of type $(\Lambda|\Lambda')$}\footnote{Similar substitution techniques for Lefschetz fibrations are studied in~\cites{Endo1,Endo2}.}.

Passing to the associated \swfs one can ask how such a substitution affects the total spaces. In the following we will treat two instances in which this question can be answered. Our main tool are the handle decompositions exhibited in the previous section.

\smallskip
Let $Z$ be a compact 4-manifold, possibly with nonempty boundary. Recall that the \emph{blow-up} of~$Z$ is the connected sum of~$Z$ with either~$\CPbar$ or~$\CP$ (taken in the interior of~$Z$). Moreover, the \emph{sum stabilization} of~$Z$ usually means the connected sum with~$S^2\times S^2$. We will be slightly more general and also allow connected sums with~$\CP\#\CPbar$, the twisted $S^2$-bundle over~$S^2$. For convenience, we let 
	\begin{equation*}
	\mathbb{S}_k:=
	\begin{cases}
	S^2\times S^2, & \text{$k$ even}\\
	\CP\#\CPbar, & \text{$k$ odd}
	\end{cases}
	\end{equation*}
and note that $\mathbb{S}_k$ is described by the $(0,k)$-framed Hopf link.

\begin{lemma}[Blow-ups and sum stabilizations]\label{T:blow-up lemma}
Let $\SD=(\S,\G)$ be a surface diagram and let $\SD'$ be obtained from $\SD$ by a substitution of type 
	\begin{equation}\label{E:blow-up configuration}
	\big( a,b \,|\, a,\tau_b^{\pm1}(a),b \big).
	\end{equation}
Furthermore, let $\SD''$ be obtained by a substitution of type 
	\begin{equation}\label{E:stabilization configuration}
	\big( a,b \,|\, a,b,\tau_b^{k}(a),b \big).
	\end{equation}
Then $Z_{\SD'}$ is diffeomorphic to the blow-up~$Z_\SD\#\mp\CP$ and~$Z_{\SD''}$ is diffeomorphic to the sum stabilization~$Z_\SD\#\mathbb{S}_{-k}$.
\end{lemma}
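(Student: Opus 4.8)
The plan is to prove both diffeomorphism statements by examining the effect of the substitution on the Kirby diagram derived from Figure~\ref{F:surface times disk 1}, using the handle-slide techniques illustrated in Example~\ref{eg:Kirby diagram example}. The key point is that a substitution of type $(\Lambda \,|\, \Lambda')$ only changes the part of the handle decomposition coming from the fold handles attached along the curves in the replaced subcircuit; all the other fold handles, as well as the underlying $\S\times D^2$, are untouched. Since adjacent curves in $\G$ (and in $\G'$, $\G''$) are geometrically dual, by the change of coordinates principle (Proposition~\ref{T:change of coordinates principle}) we may assume $a=a_1$ and $b=b_1$ in the standard picture of Figure~\ref{F:surface times disk 2}, so that the new fold handles $\tau_b^{\pm1}(a)$ or $\tau_b^k(a)$ are explicit curves we can draw.

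First I would treat the blow-up case \eqref{E:blow-up configuration}. Inserting the curve $\tau_b^{\mp1}(a)$ between $a$ and $b$ adds one fold handle attached along a push-off of $\tau_b^{\mp1}(a)$ in a nearby fiber, with the fiber framing. Using the Picard--Lefschetz formula (Proposition~\ref{T:Picard-Lefschetz formula}) together with Lemma~\ref{T:framings and linking}, I would compute that this new fold handle is attached along a curve homologous to $[a]\mp[b]$ with fiber-framing coefficient $\mp 1$. Exactly as in Example~\ref{eg:Kirby diagram example}, one slides this handle over the (unchanged) $a$- and $b$-handles using the meridians of the two relevant $1$-handles to unlink it completely, leaving a $\mp1$-framed unknot split off from the rest of the diagram, i.e.\ a connected-sum factor of $\mp\CP$. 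The remaining diagram is precisely that of $Z_\SD$, so $Z_{\SD'}\cong Z_\SD\#\mp\CP$.

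Next, for the sum-stabilization case \eqref{E:stabilization configuration}, the substitution inserts the two curves $b,\tau_b^k(a)$ between $a$ and $b$; this adds two fold handles, one along a push-off of $b$ and one along a push-off of $\tau_b^k(a)$. By Lemma~\ref{T:framings and linking} the $b$-handle carries fiber framing $0$, the $\tau_b^k(a)$-handle carries framing $-k$ (by Picard--Lefschetz, $[\tau_b^k(a)]=[a]-k[b]$ after suitable orientation choices), and their linking number is $\pm1$. Sliding the $\tau_b^k(a)$-handle over the original $a$-handle to cancel the $a_1$-part of its homology class turns it into a $(-k)$-framed curve that is a meridian of the new $0$-framed $b$-handle; the two of them form the $(0,-k)$-framed Hopf link, which is exactly $\mathbb{S}_{-k}$, split off from the rest of the diagram. (Here one must be careful with the sign: the linking with the original $b$-handle contributes, but the two new handles link each other $\pm1$ and one checks this produces $\mathbb{S}_{-k}$ and not $\mathbb{S}_{k}$; this is where keeping track of the $\mathrm{sgn}(\theta-\theta')$ term in \eqref{E:linking} matters.) What is left is again the diagram for $Z_\SD$, giving $Z_{\SD''}\cong Z_\SD\#\mathbb{S}_{-k}$.

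The main obstacle I anticipate is \emph{bookkeeping of framings and signs} during the handle slides: one has to verify that the sequence of slides genuinely unlinks the new handle(s) from \emph{all} of the surrounding diagram (not just the $a$- and $b$-handles but also any neighbouring fold handles coming from curves adjacent to $a$ and $b$ in the original circuit), and that the slides adjust the framing of the split-off piece to exactly $\mp1$ (resp.\ $(0,-k)$) rather than some other value. This requires invoking the change of coordinates principle carefully so that the neighbours of $a$ and $b$ in the circuit can be assumed to lie in a standard position relative to the $1$-handles, and then checking that the meridians used for the slides do not interact with those neighbours. The homological computations via Proposition~\ref{T:Picard-Lefschetz formula} and Lemma~\ref{T:framings and linking} are routine once the picture is set up correctly; the delicate part is purely the diagrammatic manipulation and the orientation conventions that fix the sign of $k$.
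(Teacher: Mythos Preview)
Your approach is essentially the same as the paper's: set up the Kirby diagram with $a$ and $b$ as $0$-framed meridians to a pair of $1$-handles, then slide the newly inserted fold handle(s) over those meridians to split off a $\mp1$-framed unknot (blow-up) or a $(0,-k)$-framed Hopf link (stabilization). The paper adds one simplifying device you do not mention: before drawing anything it applies \emph{switching} so that $\Gamma=(\dots,a,b)$ ends with the pair $(a,b)$. This places the inserted handles at the top of the $\theta$-ordering, so every other fold handle lies strictly below them; the bands used for the slides can then be chosen in the height range of the new handles and automatically miss the ``shaded ribbon'' containing all remaining fold handles. That single move disposes of exactly the obstacle you flag at the end --- there is no need to control the position of the neighbours of $a$ and $b$ via change of coordinates. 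One small imprecision: in the stabilization case, after sliding $\tau_b^k(a)$ over $a$ the resulting curve has homology $-k[b]$ and is not literally a meridian of the $b$-handle; the Hopf link is obtained by isolating one of the two $b$-handles together with $\tau_b^k(a)$ (sliding over $a$ and the \emph{other} $b$), and the linking $\pm1$ between them comes from the height-dependent $\mathrm{sgn}$ term in~\eqref{E:linking} rather than from a meridian picture.
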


Of course, any substitution is reversible so that whenever a surface diagram contains a configuration of the form $(a,\tau_b^{\pm1}(a),b)$ or $(a,b,\tau_b^{k}(a),b)$ the associated 4-manifold must be a blow-up or sum stabilization, respectively. We will call these \emph{blow-up} (resp. \emph{sum stabilization}) \emph{configurations}.

\begin{proof}
By switching we can assume that $\G=(\dots,a,b)$ and thus $\G'(\dots,a,\tau_b^{\pm1}(a),b)$ and $\G''=(\dots,a,b,\tau_b^{k}(a),b)$. 
Figure~\ref{F:blow-up lemma} shows the relevant parts of the handle decompositions of the associated 4-manifolds. The shaded ribbons indicate the regions that contain all the other fold handles. 
Note that the curves $a$~and~$b$ appear as 0-framed meridians to the dotted circles.
	\begin{figure}
	\includegraphics{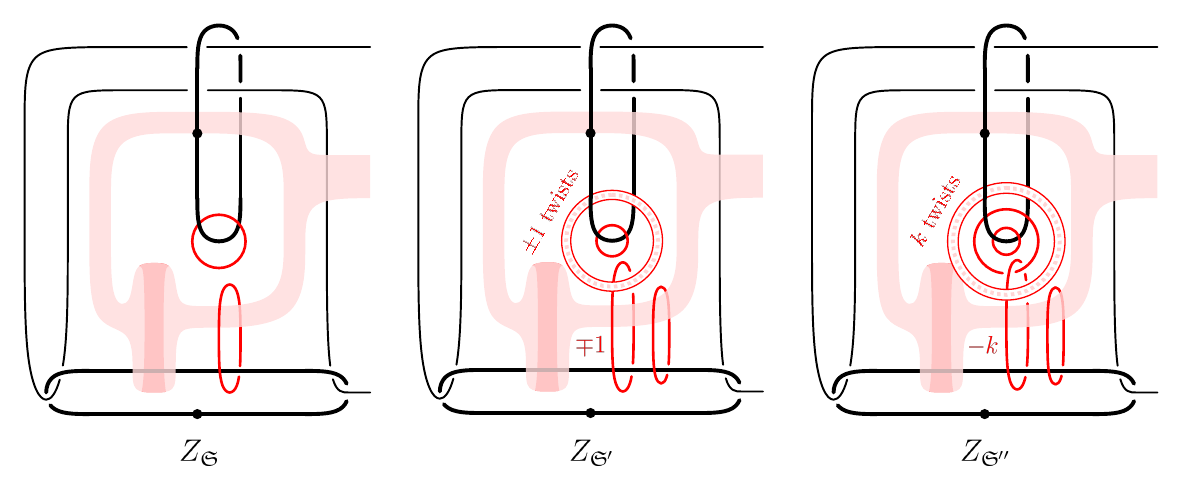}
	\caption{The relevant parts of the handle decompositions of~$Z_\SD$, $Z_{\SD'}$ and~$Z_{\SD''}$. All 2-handles without framing coefficient are $0$-framed.}
	\label{F:blow-up lemma}
	\end{figure}

In the case of $Z_{\SD'}$ we can use the meridians to unlink the curve corresponding to~$\tau_b^\pm(a)$ resulting in an unknot with framing $\mp 1$ which is isolated from the rest of the diagram. Furthermore, the rest of the diagram agrees with the diagram for~$Z_\SD$ and the claim follows.

The argument for $Z_{\SD''}$ is almost the same. Again, by sliding over the meridians we can isolate the curves corresponding to~$b$ and~$\tau_b^k(a)$ from the rest of the diagram. This time we obtain a $(0,-k)$-framed Hopf link which represents a copy of~$\mathbb{S}_{-k}$.
\end{proof}

\begin{proposition}\label{T:blow-up closed}
Let $\SD$, $\SD'$ and $\SD''$ be as in Lemma~\ref{T:blow-up lemma}. 
\begin{enumerate}
	\item All three diagrams have the same monodromy. 
	\item If $\SD$ has trivial monodromy so that~$Z_\SD$ closes off to a closed 4-manifold~$X$, then $Z_{\SD'}$ (resp.~$Z_{\SD'}$) closes off to~$X\#\mp\CP$ (resp.~$X\#\mathbb{S}_k$).
	\item Any closed 4-manifold obtained from~$\SD'$ (resp.~$\SD''$) is a blow-up (resp.~sum-stabilization) of a manifold obtained from~$\SD$.
\end{enumerate}
\end{proposition}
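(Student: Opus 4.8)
The plan is to read everything off Lemma~\ref{T:blow-up lemma} and its proof, exploiting that the diffeomorphisms it produces are supported in the interior of the total spaces. The key point, which drives all three parts, is that a substitution of type~\eqref{E:blow-up configuration} or~\eqref{E:stabilization configuration} only alters the fold arcs over a disk in the base $D^2$ disjoint from $\del D^2$, so it leaves the fibration over a collar of $\del D^2$ untouched. Concretely, inspecting the proof of Lemma~\ref{T:blow-up lemma}, the Kirby diagram of $Z_{\SD'}$ (resp.\ $Z_{\SD''}$) is that of $Z_\SD$ with a $\mp1$-framed unknot (resp.\ a $(0,-k)$-framed Hopf link) adjoined, and this extra link can be placed in a ball disjoint from a collar of $\del Z_\SD$. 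Hence the diffeomorphisms $Z_{\SD'}\cong Z_\SD\#\mp\CP$ and $Z_{\SD''}\cong Z_\SD\#\mathbb{S}_{-k}$ may be chosen to restrict to the identity near the boundary and to intertwine $w_{\SD'}$, $w_{\SD''}$ with $w_\SD$ there; in particular the fibered boundaries $(\del Z_{\SD'},w_{\SD'})$, $(\del Z_{\SD''},w_{\SD''})$ and $(\del Z_\SD,w_\SD)$ all get canonically identified.

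Granting this, part~(1) is immediate: a substitution of the given type leaves the first curve $\g_1$ of the circuit unchanged, so $\mu_\SD$, $\mu_{\SD'}$ and $\mu_{\SD''}$ all lie in $\M(\S_{\g_1})$, and by Lemma~\ref{T:boundary monodromy} each is the monodromy of the corresponding fibered boundary measured against the fiber $\S_{\g_1}$ over a fixed point of $\del D^2$. As the boundaries are identified by maps fixing that fiber, the three monodromies agree. (One could also prove this by a direct computation in $\M(\S)$ using the lift $\tilde\mu_\SD$ of Lemma~\ref{T:lifting the monodromy}, the braid relation for adjacent curves, and Corollary~\ref{T:kernel of surgery homomorphism}, but the boundary argument is shorter.) For part~(2), part~(1) tells us $\SD'$ and $\SD''$ also have trivial monodromy, so $Z_{\SD'}$ and $Z_{\SD''}$ close off. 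Fix a closing-off $X=Z_\SD\cup_\phi(\S_-\times D^2)$, where $\phi$ is a fiberwise identification of $\S_-\times S^1$ with $\del Z_\SD$. Using the identification $\del Z_{\SD'}=\del Z_\SD$ we close off $Z_{\SD'}$ by the same $\phi$; since the connected-sum ball sits in the interior of $Z_\SD\subset Z_{\SD'}$, far from the gluing region, we get $Z_{\SD'}\cup_\phi(\S_-\times D^2)=(Z_\SD\cup_\phi(\S_-\times D^2))\#\mp\CP=X\#\mp\CP$, and likewise $Z_{\SD''}$ closes off to $X\#\mathbb{S}_{-k}=X\#\mathbb{S}_k$.

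Part~(3) is the converse bookkeeping. Given any closed manifold $X'=Z_{\SD'}\cup_\phi(\S_-\times D^2)$ produced from $\SD'$, the same $\phi$ (now viewed via $\del Z_{\SD'}=\del Z_\SD$) closes $Z_\SD$ off to a closed manifold $X=Z_\SD\cup_\phi(\S_-\times D^2)$ obtained from $\SD$, and the computation above gives $X'=X\#\mp\CP$; similarly any closed manifold produced from $\SD''$ is a sum stabilization $X\#\mathbb{S}_k$ of one produced from $\SD$.

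The one step that genuinely needs care — and the main obstacle — is the claim in the first paragraph that the diffeomorphisms of Lemma~\ref{T:blow-up lemma} can be arranged to be the identity, and fibration-preserving, on a collar of $\del Z_\SD$; this is exactly what licenses transporting the boundary fibration, its monodromy, and any chosen closing-off between the three total spaces. Everything after that is routine.
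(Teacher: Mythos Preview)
Your proof is correct and follows essentially the same approach as the paper: both arguments rest on the observation that the connected summand produced in Lemma~\ref{T:blow-up lemma} sits in the interior of~$Z_\SD$, so the boundary fibration---and hence the monodromy and any choice of closing-off---is unaffected by the substitution. The paper phrases parts~(2) and~(3) by appealing to the Kirby-moves method of Remark~\ref{R:closing off by Kirby moves} (knowing the last 2-handle for one diagram gives it for the others), whereas you transport the gluing map~$\phi$ directly via the boundary identification; these are the same argument in different clothing, and your version is somewhat more explicit.
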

\begin{proof}
The first statement follows directly from Lemma~\ref{T:blow-up lemma} since connected sums with closed manifold (taken in the interior) do not change the boundary.

For the other statements, observe that if one knows how to apply the method from Remark~\ref{R:closing off by Kirby moves} for~$\SD$, then one also knows it for~$\SD'$ (resp.~$\SD''$) and vice versa. 
\end{proof}

Another instance where a substitution corresponds to a well known cut-and-paste operation has been observed by Hayano (\cite{HayanoR2},~Lemma~6.13). 
Assume that a surface diagram~$\SD$ contains a curve~$c\subset \S$. If $d\subset\S$ is geometrically dual to~$c$, then one can perform a substitution of type~$(c|c,d,c)$ and Hayano shows that if~$\SD'$ denotes the resulting surface diagram, then~$Z_{\SD'}$ is obtained from~$Z_\SD$ by a surgery on the curve $\delta\subset\S\subset Z_\SD$ with respect to its \emph{fiber framing}, i.e.~the framing induced by the its canonical framing in~$\S$ together with the framing of~$\S$ in~$Z_\SD$ as a regular fiber of~$w_\SD\colon Z_\SD\ra D^2$.

One immediately notices that our sum-stabilization substitution is a special case of this construction. However, it also leads the way to the following minor generalization of the surgery substitution which captures not only the fiber framed surgery but also the one with the opposite framing.
\begin{lemma}\label{T:Hayano surgery}
Let $\SD$ and $\SD'$ be two surface diagram with the same underlying surface~$\S$ and let $c,d\subset\S$ be a geometrically dual pair of \sccs.
If~$\SD'$ is obtained from~$\SD$ by a substitution of type~$(c|c,\tau_c^k(d),c)$, then~$Z_{\SD'}$ is obtained from~$Z_\SD$ by a surgery on~$d\subset\S\subset X$ \wrt the fiber framing when~$k$ is even and the opposite framing when~$k$ is odd.
\end{lemma}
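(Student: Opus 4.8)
The plan is to reduce the statement to the case $k=0$, which is Hayano's lemma (\cite{HayanoR2},~Lemma~6.13), by a change of the dual curve followed by a framing computation. First I would check that the substitution is even defined: since $\tau_c^k$ is a diffeomorphism of $\S$ we have $i(\tau_c^k(d),c)=i(d,\tau_c^{-k}(c))=i(d,c)=1$, so $\tau_c^k(d)$ is again geometrically dual to $c$ and $(c,\tau_c^k(d),c)$ is a legitimate circuit. Hayano's lemma then applies verbatim with the dual curve taken to be $e:=\tau_c^k(d)$ rather than $d$, and yields that $Z_{\SD'}$ is obtained from $Z_\SD$ by surgery on $\tau_c^k(d)\subset\S\subset Z_\SD$ \wrt the fiber framing. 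The only remaining task is to identify this surgery with a surgery on $d$ itself.

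The key observation is that $c$ is one of the curves of the circuit of $\SD$, hence a vanishing cycle of $w_\SD$. By (the proof of) Lemma~\ref{T:fold vanishing cycles} and Proposition~\ref{T:handle decompositions from SWFs}, the fold handle associated to $c$ is a $2$-handle attached along a fibre push-off of $c$ \wrt the fiber framing; its core is an embedded disk $\Delta\subset Z_\SD$ whose boundary is a parallel copy of $c$, whose interior is disjoint from every fibre, and which induces the fiber framing on its boundary (cf.\ Remark~\ref{R:similarity with Lefschetz fibrations}). Using $\Delta$ one can isotope $\tau_c^k(d)$ to $d$ inside $Z_\SD$: concretely, $\tau_c^k(d)$ is obtained from $d$ by splicing in $k$ parallel copies of $c$ at the single point of $c\cap d$, and these can be removed by sliding over $\Delta$. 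This isotopy is not fibre preserving, and the plan is to show that it carries the fiber framing of $\tau_c^k(d)$ to the fiber framing of $d$ when $k$ is even and to the opposite framing when $k$ is odd. Since surgery on a loop in a $4$-manifold depends only on the framing modulo $2$, this completes the argument.

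The heart of the proof, and the main obstacle, is this last mod-$2$ framing statement, i.e.\ the claim that applying a single Dehn twist about the vanishing cycle $c$ to the surgery curve reverses the parity of the surgery framing. The cleanest route I see is to measure the framing defect by the self-intersection of the trace annulus of the isotopy from $\tau_c^k(d)$ to $d$; projecting this annulus to the base $D^2$ one finds that the defect equals, up to sign and an even correction, the algebraic intersection number $\scp{[\tau_c^k(d)],[d]}_\S$, which by the Picard--Lefschetz formula (Proposition~\ref{T:Picard-Lefschetz formula}) is $k\,\scp{[c],[d]}^2=k$, and hence has parity $k$. Alternatively, and more concretely, one can localise: a regular neighbourhood $\nu(c\cup d)\subset\S$ is a once-punctured torus, and over the higher-genus region $Z_\SD$ contains $\nu(c\cup d)\times D^2$ together with the single fold handle along $c$, so that the entire question takes place inside a standard model to which Lemma~\ref{T:framings and linking} and the crossing analysis of Figure~\ref{F:intersections and crossings} apply directly. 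As a consistency check, this local model with $c=b_1$ and $d=a_1$ is precisely the sum-stabilization substitution of Lemma~\ref{T:blow-up lemma}: fibre-framed surgery on $\tau_{b_1}^k(a_1)$ produces $Z_\SD\#\mathbb{S}_{-k}$, which is $Z_\SD\#(S^2\times S^2)$ for $k$ even and $Z_\SD\#\CP\#\CPbar$ for $k$ odd, exactly matching surgery on the disk-bounding curve $a_1$ \wrt the fiber framing, respectively the opposite framing.
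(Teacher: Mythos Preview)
Your overall strategy---apply Hayano's lemma with the dual curve taken to be $e=\tau_c^k(d)$ and then transport the resulting fibre-framed surgery on~$e$ to a surgery on~$d$---is a genuinely different route from the paper's, and it is more uniform in that it does not require a separate base case for odd~$k$. The paper instead works entirely in the Kirby picture: starting from the $k=0$ configuration it slides the middle fold handle over the two $c$-handles to reach $(c,\tau_c^{\mp2}(d),c)$ with the correct fibre framing, thereby showing that all even~$k$ give diffeomorphic~$Z_{\SD'}$; for the odd parity it invokes the description of the opposite-framing surgery via a cancelling Lefschetz/achiral Lefschetz pair and identifies that configuration with $k=-1$. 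So the paper never isotopes the surgery curve inside~$Z_\SD$ or compares framings under such an isotopy.

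The gap in your proposal is precisely the step you flag as ``the heart of the proof'': the mod-$2$ framing comparison. Your first argument, that the defect equals $\scp{[\tau_c^k(d)],[d]}_\S$ ``up to sign and an even correction'' by projecting the trace annulus to the base, is not justified and is suspicious as stated: the annulus necessarily passes through the core disk~$\Delta$ of the $c$-handle (since $e$ and $d$ are not even homologous in~$\S$), so an intersection number computed purely in the fibre~$\S$ has no a priori reason to capture it---the answer should depend on how~$\Delta$ is framed, i.e.\ on the attaching data of the fold handle. Your second route, localising to $\nu(c\cup d)\times D^2$ together with the $c$-handle and invoking Lemma~\ref{T:framings and linking}, is the right idea but you do not carry it out; once you do, you are computing exactly the framing change under handle slides over the $c$-handle, which is the paper's argument in disguise. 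The consistency check with Lemma~\ref{T:blow-up lemma} is reassuring but only confirms the answer in the special situation where $d$ happens to be an adjacent vanishing cycle; it does not substitute for the general computation.
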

\begin{proof}
As in Hayano's proof, it is enough to work in a \nbhd of $c\cup d$ which we can assume to be a punctured torus. 
Using our handle decomposition instead of the ones from broken Lefschetz fibrations, the effect of Hayano's surgery substitution, i.e.~the case when~$k=0$, looks as in Figure~\ref{F:Hayano surgery} where~$c$ (resp.~$d$) appears as the meridian of the upper (resp.~lower) 1-handle.
	\begin{figure}
	\includegraphics{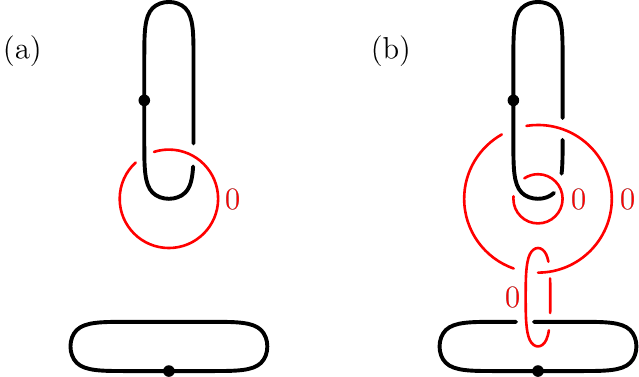}
	\caption{Hayano's surgery substitution: \nbhds with (a)~vanishing cycle~$c$ and (b)~vanishing cycles~$(c,d,c)$. }
	\label{F:Hayano surgery}
	\end{figure}
To obtain the other even cases, observe that in Figure~\ref{F:Hayano surgery}(b) we can slide the 2-handle corresponding to~$d$ once over each 2-handle corresponding to~$c$ in the same direction.
Depending on the direction this changes the framing coefficient by $\pm2$ and one readily checks that the resulting curve diagram shows a \nbhd with vanishing cycles $(c,\tau_c^{\mp2}(d),c)$. Repeating this trick one can obtain all configurations with even~$k$ and they will all describe the fiber framed surgery on~$d$.

As shown in~\cite[Example~8.4.6]{GS} the surgery with the opposite framing can be realized by inserting a pair of a Lefschetz vanishing cycle and an achiral Lefschetz vanishing cycle which are both parallel to~$d$.
But Figure~\ref{F:opposite surgery} shows that the result is the same as a substitution of type~$(c|c,\tau_c\inv(d),c)$ which corresponds to~$k=-1$.
	\begin{figure}
	\includegraphics{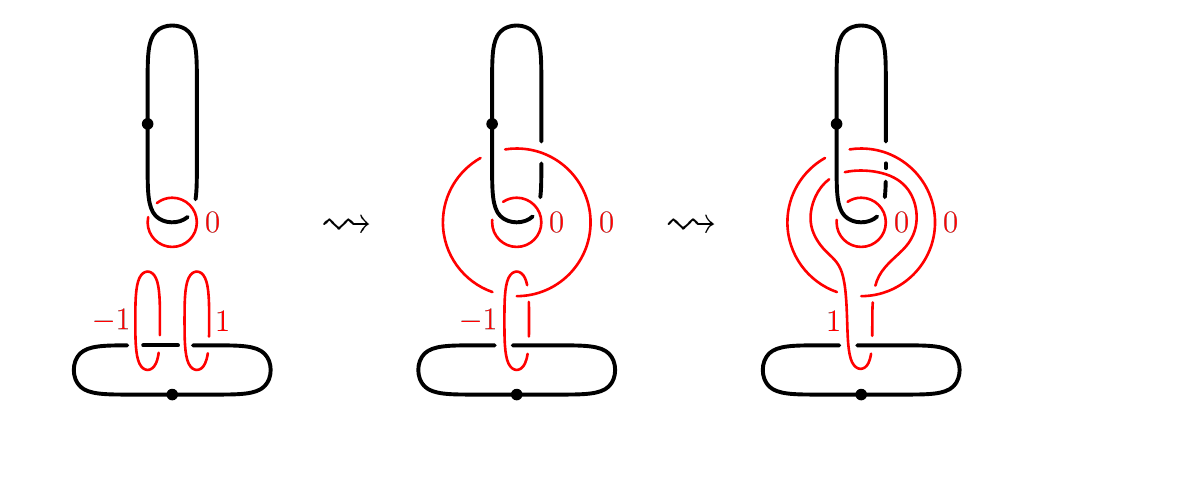}
	\caption{Surgery with the opposite framing.}
	\label{F:opposite surgery}
	\end{figure}
Moreover, the arguments for shifting the value of~$k$ by multiples of~$2$ works just as in the fiber framed case.
\end{proof}

Using the above lemma the sum-stabilization can be interpreted as performing surgery on a null-homotopic curve with either of its framing. Indeed, as~$d$ one takes one of the adjacent vanishing cycles of~$c$ in~$\SD$ which is clearly null-homotopic in~$Z_\SD$.

\smallskip
It would be interesting to interpret other cut-and-paste operations on 4-manifolds as substitutions in surface diagrams. 
For example, it is reasonable to expect such an interpretation for certain rational blow downs which can be described in terms of Lefschetz fibrations (see~\cite{Endo2}).
However, we will settle for blow-ups and sum-stabilizations in this paper.

\section{Manifolds with genus 1 \swfs}
	\label{S:genus 1 classification}

In this section we prove Theorem~\ref{T:genus 1 classification, intro}. 
Our strategy is to use Proposition~\ref{T:blow-up closed} to construct some genus~$1$ \swfs and then show that this construction gives all such fibrations.

\smallskip
We begin with the construction of genus~1 \swfs over~$S^2$. As before, we denote by~$\mathbb{S}_k$ the closed 4-manifolds described by the $(0,k)$-framed Hopf link and we define a family of manifolds
	\begin{equation}\label{E:genus 1 manifold list}
	X_{klmn}=\mathbb{S}_k \# l(S^2\times S^2) \# m\CP \# n\CPbar ,
	\quad k\in\set{0,1},\;\, l,m,n\geq0.
	\end{equation}
Note that these are precisely the manifolds in Theorem~\ref{T:genus 1 classification, intro}.
Recall that $\mathbb{S}_k$ is an $S^2$-bundle over~$S^2$. By performing a birth on a suitable bundle projection~$\mathbb{S}_k\ra S^2$ we obtain a \swf with two cusps. We can then use Lemma~\ref{T:blow-up lemma} to add the other summands at will.
Thus, in order to prove Theorem~\ref{T:genus 1 classification, intro}, it remains to show the following.
\begin{proposition}\label{T:genus 1 classification}
Let $w\colon X\ra S^2$ be a \swf of genus~1. 
Then~$X$ is diffeomorphic to some~$X_{klmn}$ described in~\eqref{E:genus 1 manifold list}.
\end{proposition}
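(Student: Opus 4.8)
The plan is to pass from $w$ to its surface diagram and then induct on the number of cusps, stripping off a connected summand $\pm\CP$ or $\mathbb{S}_k$ at each step until only an $S^2$-bundle over $S^2$ is left. First I would translate the problem. Since $w$ has genus one, its higher genus fibre is a torus and its lower genus fibre is $S^2$; the critical circle cuts $S^2$ into two disks, so both surface bundles $Y_\pm$ glued onto the annular part are trivial. Hence $w$ restricts to a descending \swf $Z\ra D^2$ whose higher boundary is $T^2\times S^1$, so the higher monodromy (being conjugate to the identity) is trivial and the associated surface diagram $\SD=(T^2,\G)$ is untwisted with $\G=(\g_1,\dots,\g_c)$ a closed circuit of length $c\geq 2$. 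Moreover the monodromy $\mu_\SD$ lives in $\M(S^2)=1$ and is therefore automatically trivial, so $\del Z\cong S^2\times S^1$ and, by Lemma~\ref{T:boundary monodromy}, Proposition~\ref{T:SDs and descending SWFs} and the remarks after it, $X$ is obtained by closing off $Z_\SD$ (uniqueness of the closing off is not needed). Observe also that any two consecutive vanishing cycles are a basis of $H_1(T^2)$, hence normally generate $\pi_1(T^2)$, so $\pi_1(Z_\SD)=1$ and the same holds for any closed $4$-manifold obtained from $\SD$.

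The heart of the argument is a reduction on $c$. For $c\geq 3$, orient the curves so that $\scp{[\g_i],[\g_{i+1}]}=1$ and define integers $\beta_i$ (indices mod $c$) by $[\g_{i+1}]=-[\g_{i-1}]+\beta_i[\g_i]$; equivalently $\beta_i=\scp{[\g_{i-1}],[\g_{i+1}]}$, so $\g_{i-1}=\g_{i+1}$ when $\beta_i=0$ and $\g_{i-1},\g_{i+1}$ are geometrically dual when $\beta_i=\pm1$. Reading $\G$ as a closed edge path in the Farey graph, a standard planarity argument (an embedded closed edge path of length $\geq 3$ bounds a union of Farey triangles and therefore has an ``ear''; a non-embedded one either backtracks or splits off a shorter closed subpath) shows that some $\beta_i\in\{-1,0,1\}$. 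If $\beta_i=\pm1$, then since mapping classes of the torus are determined by their homological action (Proposition~\ref{T:Picard-Lefschetz formula}, Remark~\ref{R:Picard-Lefschetz on torus}) one checks that $\g_i=\tau_{\g_{i+1}}^{\pm1}(\g_{i-1})$, so after a switch the subcircuit $(\g_{i-1},\g_i,\g_{i+1})$ is a blow-up configuration; reversing the substitution of Lemma~\ref{T:blow-up lemma} removes one cusp, and by Proposition~\ref{T:blow-up closed} the shorter diagram again closes off and $X$ is a blow-up of a manifold it closes off to. If $\beta_i=0$, then $\G$ contains the pattern $(\g_{i-1},\g_i,\g_{i-1})$; writing $\g_i=\tau_{\g_{i-1}}^{k}(\g_{i-2})$ (legitimate because every curve dual to $\g_{i-1}$ is such a twist of $\g_{i-2}$) this is the right side of a substitution of type $(c\mid c,\tau_c^k(d),c)$ with $c=\g_{i-1}$, $d=\g_{i-2}$, and reversing it (Lemma~\ref{T:Hayano surgery}) removes two cusps. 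Since in the shorter diagram $d=\g_{i-2}$ is an adjacent vanishing cycle, hence null-homotopic, this surgery is (as in the remark following Lemma~\ref{T:Hayano surgery} and in~\cite[Example~8.4.6]{GS}) a connected sum with $S^2\times S^2$ or $\CP\#\CPbar$, so $X$ is a sum stabilization of a manifold the shorter diagram closes off to.

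Iterating, $c$ strictly decreases but never drops below $2$: a closed circuit has length $\geq 2$, and when $c=3$ all $\beta_i=\pm1$, so the $\beta_i=0$ reduction (which would lower $c$ by two) is never applied at $c=3$. Hence after finitely many steps we reach a closed circuit of length $2$, which is unique up to diffeomorphism by the change of coordinates principle (Proposition~\ref{T:change of coordinates principle}); a short Kirby calculus computation in the spirit of Example~\ref{eg:Kirby diagram example} (the two fold handles cancel the two $1$-handles of $T^2\times D^2$, leaving $S^2\times D^2$, which is then closed off) shows that it closes off to an $S^2$-bundle over $S^2$, i.e.\ $\mathbb{S}_0$ or $\mathbb{S}_1$ by Corollary~\ref{T:surface bundles over the sphere}. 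Accumulating the steps, $X$ is diffeomorphic to $\mathbb{S}_{k_0}\# p\CP\# q\CPbar\# r(S^2\times S^2)\# s(\CP\#\CPbar)$ for some $k_0\in\set{0,1}$ and $p,q,r,s\geq 0$, and the classical diffeomorphisms $(S^2\times S^2)\#\CP\cong 2\CP\#\CPbar$ and $2(\CP\#\CPbar)\cong(S^2\times S^2)\#(\CP\#\CPbar)$ collapse this to some $X_{klmn}$ from the list~\eqref{E:genus 1 manifold list}.

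The step I expect to be the main obstacle is the combinatorial reduction itself: making the Farey-graph claim rigorous (that every closed circuit of length $\geq 3$ on the torus contains some $\beta_i\in\{-1,0,1\}$), and checking carefully that each reverse substitution can be performed within the class of genus one surface diagrams and contributes exactly a $\pm\CP$ or $\mathbb{S}_k$ summand — in particular, in the $\beta_i=0$ case, that the Hayano surgery is really along a null-homotopic (adjacent) vanishing cycle. The base case at $c=2$ and the final connected-sum bookkeeping are routine by comparison.
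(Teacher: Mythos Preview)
Your proof is correct and follows the same overall strategy as the paper: reduce the length of the circuit by locating blow-up or sum-stabilization configurations, split off the corresponding connected summands via Proposition~\ref{T:blow-up closed}, and finish at length two with~$Z_{\SD_0}\cong S^2\times D^2$ closing off to~$\mathbb{S}_0$ or~$\mathbb{S}_1$.

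The genuine difference lies in how you establish the combinatorial core (some coefficient lies in~$\{-1,0,1\}$). The paper argues entirely in~$H_1(T^2)$: writing $\g_i=k_i\g_{i-1}-\g_{i-2}$ and $\sigma_i=\scp{\g_1,\g_i}$, one has the recursion $\sigma_i=k_i\sigma_{i-1}-\sigma_{i-2}$, and a two-line induction shows $|\sigma_i|$ is strictly increasing whenever all~$|k_i|\geq 2$, contradicting closedness. Your Farey-tessellation argument is equally valid and perhaps more conceptual, but the paper's recursion is shorter and completely self-contained, avoiding the case split into embedded versus non-embedded paths. A second minor difference: for $\beta_i=0$ you invoke Lemma~\ref{T:Hayano surgery} and then argue the surgered curve is null-homotopic, whereas the paper reads off the four-term pattern $(\g_{i-2},\g_{i-1},\g_i,\g_{i-1})$ directly as a sum-stabilization configuration $(a,b,\tau_b^k(a),b)$ from Lemma~\ref{T:blow-up lemma}; the paper itself remarks (after Lemma~\ref{T:Hayano surgery}) that the two viewpoints coincide. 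One small caveat: orienting the curves so that $\scp{\g_i,\g_{i+1}}=1$ \emph{cyclically} is not always possible, so it is cleaner to impose this only for $i<c$ as the paper does; this does not affect your Farey argument, which is about unoriented curves anyway.
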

\begin{remark}\label{R:reformulation of classification}
The reason for our small reformulation of Theorem~\ref{T:genus 1 classification, intro} is that, while the original formulation is cleaner, the new one is much more in tune with the structure of the proof.
\end{remark}

The key to the proof of Proposition~\ref{T:genus 1 classification} is the simple nature of simple closed curves on the torus. 
Indeed, the two well known facts that two oriented \sccs on the torus are isotopic if and only if they are homologous and that the (absolute value of the) algebraic and geometric intersection numbers agree allow us to transfer the whole discussion of genus~1 surface diagrams into the homology group~$H_1(T^2)\cong\Z\oplus\Z$ simply by choosing orientations on the curves.
Building on this observation we obtain the following result about the structure of genus~1 surface diagrams.

\begin{lemma}\label{T:genus 1 circuits}
Any closed circuit on the torus of length at least three contains blow-up or sum stabilization configurations (as described in Lemma~\ref{T:blow-up lemma}).
\end{lemma}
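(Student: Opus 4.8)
The plan is to push everything into the homology of the torus and then run a growth estimate on the vanishing cycles. Fix a closed circuit $\Gamma=(\gamma_1,\dots,\gamma_c)$ with $c\ge3$. Each $\gamma_i$ is geometrically dual to a neighbour, hence non-separating, hence (after orienting) represents a primitive class $v_i\in H_1(T^2)\cong\Z^2$; I orient so that $\langle v_i,v_{i+1}\rangle=1$ for $1\le i\le c-1$ and write $\langle v_c,v_1\rangle=\eps\in\{\pm1\}$ for the closing datum. On the torus, essential curves are determined up to isotopy by their class up to sign, geometric intersection is the absolute value of the algebraic one, and mapping classes act faithfully on $H_1$ (Remark~\ref{R:Picard-Lefschetz on torus}); I use these throughout. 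Expanding $v_{i+2}$ in the unimodular basis $(v_i,v_{i+1})$ and using $\langle v_{i+1},v_{i+2}\rangle=1$ gives, for $1\le i\le c-2$, the recursion $v_{i+2}=q_iv_{i+1}-v_i$ with $q_i=\langle v_i,v_{i+2}\rangle$, so that $|q_i|=i(\gamma_i,\gamma_{i+2})$; the analogous expansions across the ``seam'' give $v_1=s\,v_{c-1}+t\,v_c$ and $v_2=s'v_c+t'v_1$ with $|s|=|s'|=1$, $|t|=i(\gamma_{c-1},\gamma_1)$ and $|t'|=i(\gamma_c,\gamma_2)$.

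I then recognise the configurations homologically. Since a surface diagram only matters up to equivalence, it is enough to find a configuration after a switch, i.e.\ after cyclically permuting the $\gamma_i$; in particular I may move any three or four consecutive curves into interior positions, where the normalisation $\langle v_i,v_{i+1}\rangle=1$ is available. Suppose $i(\gamma_j,\gamma_{j+2})=1$. Then $\gamma_j,\gamma_{j+2}$ are geometrically dual, and expressing the class of $\gamma_{j+1}$ in the basis $(v_j,v_{j+2})$ and using that $\gamma_{j+1}$ is geometrically dual to both shows this class equals $\pm v_j\pm v_{j+2}$; by the Picard--Lefschetz formula (Proposition~\ref{T:Picard-Lefschetz formula}) these are exactly the classes of $\tau_{\gamma_{j+2}}^{\pm1}(\gamma_j)$, so $(\gamma_j,\gamma_{j+1},\gamma_{j+2})$ has the form $(a,\tau_b^{\pm1}(a),b)$, a blow-up configuration in the sense of Lemma~\ref{T:blow-up lemma}. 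Suppose instead $i(\gamma_j,\gamma_{j+2})=0$. Then $v_{j+2}=\pm v_j$, so $\gamma_{j+2}\simeq\gamma_j$, and the same kind of computation (now with $\gamma_{j-1},\gamma_{j+1}$ both dual to $\gamma_j$) gives $\gamma_{j+1}\simeq\tau_{\gamma_j}^{k}(\gamma_{j-1})$ for some $k$, so $(\gamma_{j-1},\gamma_j,\gamma_{j+1},\gamma_{j+2})$ has the form $(a,b,\tau_b^{k}(a),b)$, a sum-stabilization configuration. Consequently, if $\Gamma$ contains no configuration, then $r_i:=i(\gamma_i,\gamma_{i+2})\ge2$ for every $i\in\Z/c$.

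Assuming $r_i\ge2$ for all $i$, I derive a contradiction. Let $|\cdot|$ be the Euclidean norm on $\Z^2\subset\R^2$. Since consecutive $v_i$ are linearly independent, the reverse triangle inequality applied to $v_{i+2}=q_iv_{i+1}-v_i$ with $|q_i|\ge2$ is strict and gives $|v_{i+2}|>2|v_{i+1}|-|v_i|$ for $1\le i\le c-2$; the seam relations (with $|t|,|t'|\ge2$) give the same inequality for the two remaining cyclic indices, so $|v_{i+2}|>2|v_{i+1}|-|v_i|$ holds for all $i\in\Z/c$. Now choose $j$ with $|v_j|$ maximal among $v_1,\dots,v_c$. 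Then $|v_{j-1}|\le|v_j|$, whence $|v_{j+1}|>2|v_j|-|v_{j-1}|\ge|v_j|$, contradicting maximality. Therefore $\Gamma$ must contain a blow-up or a sum-stabilization configuration.

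The substantial work is not the growth estimate, which is a two-line argument, but the bookkeeping behind the second paragraph: one must check carefully that the purely homological conditions $i(\gamma_j,\gamma_{j+2})\le1$ really translate into the precise substitution patterns $(a,\tau_b^{\pm1}(a),b)$ and $(a,b,\tau_b^{k}(a),b)$, keeping track of signs and orientations, and that switching legitimately lets one assume the relevant window sits in interior positions. Once those identifications are nailed down, the cyclic inequality $|v_{i+2}|>2|v_{i+1}|-|v_i|$ and the maximum-norm trick conclude the proof uniformly for all $c\ge3$ (for $c=3$ one even has $r_1=1$ automatically from the closing condition, giving a blow-up configuration directly).
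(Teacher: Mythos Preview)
Your proof is correct and shares the paper's overall strategy---pass to homology, set up the three-term recursion, and show that $|q_i|\ge 2$ everywhere is impossible---but the growth argument is genuinely different. The paper tracks the scalar $\sigma_i=\langle v_1,v_i\rangle$: from $\sigma_1=0$, $\sigma_2=1$ and the same recursion one gets $|\sigma_i|$ strictly increasing, which immediately contradicts the closing condition $|\sigma_c|=1$. That argument is linear and never crosses the seam, so no separate analysis of the relations near indices $c,1,2$ is needed. Your version instead controls the Euclidean norms $|v_i|$ cyclically and finishes with the maximal-element trick; this is more symmetric but obliges you to verify the inequality across the seam (with the possible sign flip in the leading coefficient) and to justify strictness there, which you do correctly. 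The configuration-recognition step is essentially identical in both proofs, and both rely on the cyclic reading of ``subcircuit'' to place the sum-stabilization window when the degenerate index sits at an end; the paper leaves this implicit, while you make it explicit via switching.
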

\begin{proof}
Let $\G=(\g_1,\dots,\g_c)$ be a (not necessarily closed) circuit on the torus of length $c\geq3$. As usual, we choose an arbitrary orientation on $\g_1$ and orient the remaining curves by requiring that $\scp{\g_i,\g_{i+1}}=1$ for $i<c$ so that we can consider each $\g_i$ as an element of $H_1(T^2)$.

We first observe that, since any two adjacent curves in~$\G$ algebraically dual, they form a basis of~$H_1(T^2)$. In particular, for $i\geq3$ we can write
	\begin{equation*}
	\g_i=k_i\g_{i-1}-\g_{i-2},\quad k_i\in\Z
	\end{equation*}
where the coefficient of $\g_{i-2}$ determined by our convention that $\scp{\g_{i-1},\g_i}=1$. This shows that if we denote by $\sigma_i:=\scp{\g_1,\g_i}$ the algebraic intersection number between~$\g_1$ and $\g_i$, then we have $\sigma_1=0$, $\sigma_2=1$ and the recursion formula
	\begin{equation}\label{E:intersection recursion}
	\sigma_{i}=k_i\sigma_{i-1}-\sigma_{i-2}
	\end{equation}
holds for $i\geq3$. At this point we note that $\G$ is closed if and only if $|\sigma_c|=1$.

\smallskip
We claim that if $|k_i|\geq2$ for all $i\geq3$, then $|\sigma_{i+1}|>|\sigma_{i+1}|$ for all~$i$. This follows inductively since $|\sigma_2|>|\sigma_1|$ and from~\eqref{E:intersection recursion} we get
	\begin{align*}
	|\sigma_{i+1}| 
		&=|k_{i+1}\sigma_i-\sigma_{i-1}| \\
		& \geq \big| |k_{i+1}||\sigma_i|-|\sigma_{i-1}| \big| \\
		& = |k_{i+1}||\sigma_i|-|\sigma_{i-1}| > |\sigma_i|
	\end{align*}
where we have used the reverse triangle inequality, the induction hypothesis and the assumption that $|k_{i+1}|\geq2$.
As a consequence, we see that if~$\G$ is closed, then we must have~$|k_i|\leq1$ for some~$i\geq3$. 

Assume first that $k_i=\pm1$. For the sake of a cleaner notation we momentarily rename the relevant curves to
	\begin{equation}\label{E:detecting a blow-up}
	(\g_{i-2},\g_{i-1},\g_i)=:(a,\xi,b).
	\end{equation}
By assumption, we have $b=\pm\xi-a$ and thus $\xi=\pm(a+b)$ and the orientation convention shows that~$\scp{a,b}=\pm1$.
By invoking the Picard-Lefschetz formula (Proposition~\ref{T:Picard-Lefschetz formula}) we obtain
	\begin{align*}
	\tau_a^{\pm1}(b)
	&=b\pm\scp{a,b}a \\
	&=a+b \\
	&= \pm\xi
	\end{align*}
which, after forgetting the orientations again, reveals the excerpt of~$\G$ shown in~\eqref{E:detecting a blow-up} as a blow-up configuration.

A similar argument exhibits a sum-stabilization configuration in the remaining case when~$k_i=0$. The details are left to the reader.
\end{proof}

The proof of Proposition~\ref{T:genus 1 classification}, and thus of Theorem~\ref{T:genus 1 classification, intro} is now very easy.

\begin{proof}[Proof of Proposition~\ref{T:genus 1 classification}]
Any genus one \swf over~$S^2$ can is obtained by closing off a manifold~$Z_\SD$ associated to a surface diagram~$\SD=(T^2,\G)$.
Moreover, any such diagram~$\SD$ can be closed off since the mapping class group of the lower genus fiber is trivial.
By Lemma~\ref{T:genus 1 circuits} and Proposition~\ref{T:blow-up closed}~(3) we can successively split off summands of the form~$\pm\CP$ and~$\mathbb{S}_k$ until the remaining surface diagram, say~$\SD_0$ has a circuit of length two.
It is easy to see that $Z_{\SD_0}$ is the trivial disk bundle~$S^2\times D^2$. 
(Either by drawing a Kirby diagram or by observing that any \swf with two cusps is homotopic to a bundle projection.) 
Thus there are exactly two ways to close off the fibration, producing a summand of the form~$\mathbb{S}_0\cong S^2\times S^2$ or~$\mathbb{S}_1\cong\CP\#\CPbar$.
\end{proof}

\section{Concluding remarks}
	\label{S:concluding remarks}
The theory of simple wrinkled fibrations and surface diagrams is still in a very early stage and at this point it raises more questions then it provides answers.
We would like to take the opportunity to point out some of the major problems in the subject as well as to indicate some further developments.

\subsection{Closed 4-manifolds}
The ultimate goal is to use surface diagrams to study \emph{closed} 4-manifolds. Unfortunately, it turns out that most surface diagrams do \emph{not} describe closed manifolds since they have non-trivial monodromy and 
it is usually a hard problem to determine whether a given surface diagram has trivial monodromy.
The following is thus of great interest.

\begin{problem}\label{P:trivial monodromy conditions}
Find at least necessary conditions for a surface diagram to have trivial monodromy that are easier to check.
\end{problem}

The next major problem was already mentioned in Section~\ref{S:the last 2-handle}. If a surface diagram of sufficiently high genus is known to have trivial monodromy, then it determines a unique closed 4-manifold together with a \swf over~$S^2$ by closing off the associated fibration over the disk. 
However, the way that the surface diagram encodes the closing off information is too implicit for practical purposes. For example, by simply looking at the surface diagram it not at all clear how to answer the following very reasonable questions about the corresponding \swf over~$S^2$:
\begin{itemize}
	\item Does the fibration have a section?
	\item What can be said about the homology class of the fiber? (Is it trivial, primitive, torsion,... ?)
	\item What is the fundamental group, homology, etc. of the total space? 
\end{itemize}
What is missing is one more piece of information which is roughly the (framed) attaching curve of the last 2-handle. 
One can reformulate this issue in terms of mapping class groups (see~\cite{HayanoR2}, for example) 

\begin{problem}\label{P:closing off}
Find a practical method to determine the missing piece of information from a surface diagram with trivial monodromy. 
\end{problem}

\subsection{Higher genus fibrations}

The fact that any (achiral) Lefschetz fibration can be turned into a \swf of one genus higher suggests the philosophy that \swfs of a fixed genus might behave similarly as (achiral) Lefschetz fibrations of one genus lower.

This analogy works rather well for the lowest possible fiber genera. Indeed, our result about genus one \swfs looks very similar to the (rather trivial) classification of genus zero (achiral) Lefschetz fibrations, the latter being blow-ups of either~$S^2\times S^2$ or~$\CP\#\CPbar$.

Following this train of thought one might hope to be able to say something useful about the classification of genus two \swfs over~$S^2$ but one should expect to be lost as soon as the genus is three or higher. 
However, it is nonetheless conceivable that part of the classification scheme that worked in the genus one case might carry over to higher genus fibrations, as we will now explain

\smallskip
Let $\SD=(\S;\g_1,\dots,\g_l)$ be a surface diagram and assume that for some~$2<k<l$ the curve~$\g_k$ is geometrically dual to~$\g_1$. Then there is an obvious way to decompose~$\SD$ into the two smaller surface diagrams~$(\S;\g_1,\dots,\g_k)$ and~$(\S;\g_1,\g_k,\dots,\g_l)$.
Repeating this process we eventually obtain a decomposition of~$\SD$ into a collection of surface diagram with the property that no pair of non-adjacent curves has geometric intersection number one. 
Let us call such a surface diagram \emph{irreducible}. 

In terms of the \swf associated to~$\SD$ the above decomposition of~$\SD$ should correspond to merging the fold arcs that induce~$\g_1$ and~$\g_k$. The result is a wrinkled fibration that naturally decomposes as a \emph{boundary fiber sum} of the two \swfs associated to the parts of the decomposition of~$\SD$. 

This suggests that any descending \swf over the disk naturally decomposes into a boundary fiber sum of \emph{irreducible} fibrations where we call a \swf irreducible if its surface diagram is irreducible. 
Consequently, the classification of descending \swfs splits into two parts: the classification of irreducible fibrations and understanding the effect of boundary fiber sums. 

\smallskip
The genus one classification fits into this scheme as follows.
Our arguments show that the only irreducible surface diagrams of genus one are given by the blow-up configurations $(a,\tau_a^{\pm1}(b),b)$ and the sum-stabilization configurations $(a,b,\tau_b^k(a),b)$ for~$k\neq1$. 
Using the handle decompositions it is easy to identify the corresponding manifolds.
(They are the connected sum of $S^2\times D^2$ with either $\pm\CP$, $S^2\times S^2$ or $\CP\#\CPbar$.)
Furthermore, the boundary fiber sums are performed along spheres and are thus easy to understand.

\smallskip
Making these arguments precise requires an understanding of the effect of merging folds and cusps on surface diagrams.

\subsection{Uniqueness of surface diagrams}

Given the fact that all closed 4-manifolds can be described by surface diagrams, it is natural to ask for a set of moves to relate different surface diagrams that describe the same manifold, similar to the situation of 3-manifolds and Heegaard diagrams.

A first step in this direction was taken by Williams~\cite{Williams2} who relates the surface diagrams of homotopic \swfs over~$S^2$ of genus at least three.
He shows that any two homotopic \swfs can be connected by a special homotopy that is made up of four basic building blocks. These building blocks are simple enough to understand their effect on the initial surface diagram (see also the recent work of Hayano~\cite{HayanoR2}).

So far this is completely analogous to the 3-dimensional context.
A new phenomenon in the 4-dimensional context is that two \swfs on a given 4-manifold are not necessarily homotopic. 
The structure of the set $\pi^2(X):=[X,S^2]$ of homotopy classes of maps from a closed 4-manifold to the 2-sphere -- also known as the \emph{second cohomotopy set} of~$X$ -- is described in~\cite{cohomotopy} (see also the references therein).
Our results show that an equivalence class of surface diagrams for~$X$ determines an orbit of the action of the diffeomorphism group of~$X$ on~$\pi^2(X)$.
This action is usually 
neither trivial\footnote{For example the diffeomorphism of~$S^2\times S^2$ that interchanges the two factors also interchanges the projections onto the factors which are easily seen not to be homotopic.}
nor transitive\footnote{This follows from the fact that the diffeomorphism action on~$H_2(X)$ preserves divisibility.}.
Consequently, reparametrizing a surface diagram can change the homotopy class of its \swf but one cannot expect to obtain all homotopy classes in this way. 

A general method for relating broken fibrations in different homotopy classes is the \emph{projection move} mentioned in~\cite{Williams1} but it is not at all obvious how to interpret this procedure in terms of surface diagrams. 
Altogether, the problem of relating surface diagram with non-homotopic fibrations is still wide open.

\begin{bibdiv}
\begin{biblist}

\bib{Akbulut-Karakurt}{article}{
   author={Akbulut, Selman},
   author={Karakurt, {\c{C}}a{\u{g}}r\i},
   title={Every 4-manifold is BLF},
   journal={J. G\"okova Geom. Topol. GGT},
   volume={2},
   date={2008},
   pages={83--106},
   issn={1935-2565},
   review={\MR{2466002 (2009k:57037)}},
}
\bib{ADK}{article}{
   author={Auroux, Denis},
   author={Donaldson, Simon K.},
   author={Katzarkov, Ludmil},
   title={Singular Lefschetz pencils},
   journal={Geom. Topol.},
   volume={9},
   date={2005},
   pages={1043--1114 (electronic)},
   issn={1465-3060},
   review={\MR{2140998 (2005m:53164)}},
   doi={10.2140/gt.2005.9.1043},
}
\bib{Baykur1}{article}{
   author={Baykur, R. {\.I}nan{\c{c}}},
   title={Existence of broken Lefschetz fibrations},
   journal={Int. Math. Res. Not. IMRN},
   date={2008},
   pages={Art. ID rnn 101, 15},
   issn={1073-7928},
   review={\MR{2439543 (2010b:57026)}},
}
\bib{Baykur2}{article}{
   author={Baykur, Refik {\.I}nan{\c{c}}},
   title={Topology of broken Lefschetz fibrations and near-symplectic four-manifolds},
   journal={Pacific J. Math.},
   volume={240},
   date={2009},
   number={2},
   pages={201--230},
   issn={0030-8730},
   review={\MR{2485463 (2010c:57035)}},
   doi={10.2140/pjm.2009.240.201},
}
\bib{Baykur-Kamada}{article}{
   author={Baykur, Refik {\.I}nan{\c{c}}},
   author={Kamada, Seiichi},
   title={Classification of broken Lefschetz fibrations with small fiber genera},
   eprint={http://de.arxiv.org/abs/1010.5814v2},
   date={2010},
}
\bib{Donaldson}{article}{
   author={Donaldson, S. K.},
   title={Lefschetz pencils on symplectic manifolds},
   journal={J. Differential Geom.},
   volume={53},
   date={1999},
   number={2},
   pages={205--236},
   issn={0022-040X},
   review={\MR{1802722 (2002g:53154)}},
}
\bib{Earle-Eells}{article}{
   author={Earle, Clifford J.},
   author={Eells, James},
   title={A fibre bundle description of Teichm\"uller theory},
   journal={J. Differential Geometry},
   volume={3},
   date={1969},
   pages={19--43},
   issn={0022-040X},
   review={\MR{0276999 (43 \#2737a)}},
}
\bib{Endo1}{article}{
   author={Endo, Hisaaki},
   author={Gurtas, Yusuf Z.},
   title={Lantern relations and rational blowdowns},
   journal={Proc. Amer. Math. Soc.},
   volume={138},
   date={2010},
   number={3},
   pages={1131--1142},
   issn={0002-9939},
   review={\MR{2566578 (2011a:57047)}},
   doi={10.1090/S0002-9939-09-10128-4},
}
\bib{Endo2}{article}{
   author={Endo, Hisaaki},
   author={Mark, Thomas E.},
   author={Van Horn-Morris, Jeremy},
   title={Monodromy substitutions and rational blowdowns},
   journal={J. Topol.},
   volume={4},
   date={2011},
   number={1},
   pages={227--253},
   issn={1753-8416},
   review={\MR{2783383}},
   doi={10.1112/jtopol/jtq041},
}
\bib{primer}{book}{
   author={Farb, Benson},
   author={Margalit, Dan},
   title={A Primer on Mapping Class Groups},
   series={Princeton Mathematical Series},
   volume={49},
   publisher={Princeton University Press},
   place={Providence, RI},
   date={2011},
   pages={448},
   isbn={9780691147949},
}
\bib{Gay-Kirby}{article}{
   author={Gay, David T.},
   author={Kirby, Robion},
   title={Constructing Lefschetz-type fibrations on four-manifolds},
   journal={Geom. Topol.},
   volume={11},
   date={2007},
   pages={2075--2115},
   issn={1465-3060},
   review={\MR{2350472 (2009b:57048)}},
   doi={10.2140/gt.2007.11.2075},
}
\bib{GK1}{article}{
   author={Gay, David T.},
   author={Kirby, Robion},
   title={Indefinite Morse 2-functions; broken fibrations and generalizations},
   eprint={arXiv:1102.0750v2 [math.GT]},
   date={2011},
}
\bib{GK2}{article}{
   author={Gay, David T.},
   author={Kirby, Robion},
   title={Fiber connected, indefinite Morse 2-functions on connected n-manifolds},
   eprint={arXiv:1102.2169v2 [math.GT]},
   date={2011},
}
\bib{GK3}{article}{
   author={Gay, David T.},
   author={Kirby, Robion},
   title={Reconstructing 4-manifolds from Morse 2-functions},
   eprint={http://de.arxiv.org/abs/1202.3487},
   date={2012},
}
\bib{GG}{book}{
   author={Golubitsky, M.},
   author={Guillemin, V.},
   title={Stable mappings and their singularities},
   note={Graduate Texts in Mathematics, Vol. 14},
   publisher={Springer-Verlag},
   place={New York},
   date={1973},
   pages={x+209},
   review={\MR{0341518 (49 \#6269)}},
}
\bib{GS}{book}{
   author={Gompf, Robert E.},
   author={Stipsicz, Andr{\'a}s I.},
   title={$4$-manifolds and Kirby calculus},
   series={Graduate Studies in Mathematics},
   volume={20},
   publisher={American Mathematical Society},
   place={Providence, RI},
   date={1999},
   pages={xvi+558},
   isbn={0-8218-0994-6},
   review={\MR{1707327 (2000h:57038)}},
}
\bib{Hayano1}{article}{
   author={Hayano, Kenta},
   title={On genus 1 simplified broken Lefschetz fibrations},
   journal={Alg. Geom. Topol.},
   volume={11},
   date={2011},
   number={3},
   pages={1267--1322},
   doi={10.2140/agt.2011.11.1267},
}
\bib{HayanoR2}{article}{
   author={Hayano, Kenta},
   title={Modification rule of monodromies in $R_2$-move},
   eprint={http://de.arxiv.org/abs/1203.4299v1},
   date={2012},
}
\bib{Ivanov}{book}{
   author={Ivanov, Nikolai V.},
   title={Subgroups of Teichm\"uller modular groups},
   series={Translations of Mathematical Monographs},
   volume={115},
   note={Translated from the Russian by E. J. F. Primrose and revised by the
   author},
   publisher={American Mathematical Society},
   place={Providence, RI},
   date={1992},
   pages={xii+127},
   isbn={0-8218-4594-2},
   review={\MR{1195787 (93k:57031)}},
}
\bib{Kas}{article}{
   author={Kas, A.},
   title={On the handlebody decomposition associated to a Lefschetz
   fibration},
   journal={Pacific J. Math.},
   volume={89},
   date={1980},
   number={1},
   pages={89--104},
   issn={0030-8730},
   review={\MR{596919 (82f:57012)}},
}
\bib{cohomotopy}{article}{
   author={Kirby, Robion},
   author={Melvin, Paul},
   author={Teichner, Peter},
   title={Cohomotopy sets of 4-manifolds},
   eprint={http://de.arxiv.org/abs/1203.1608v1},
   date={2012},
}
\bib{Lekili}{article}{
   author={Lekili, Yanki},
   title={Wrinkled fibrations on near-symplectic manifolds},
   note={Appendix B by R. \.Inan\c c Baykur},
   journal={Geom. Topol.},
   volume={13},
   date={2009},
   number={1},
   pages={277--318},
   issn={1465-3060},
   review={\MR{2469519 (2009k:57043)}},
   doi={10.2140/gt.2009.13.277},
}
\bib{Williams1}{article}{
   author={Williams, Jonathan},
   title={The $h$-principle for broken Lefschetz fibrations},
   journal={Geom. Topol.},
   volume={14},
   date={2010},
   number={2},
   pages={1015--1061},
   issn={1465-3060},
   review={\MR{2629899 (2011d:57066)}},
   doi={10.2140/gt.2010.14.1015},
}
\bib{Williams2}{article}{
   author={Williams, Jonathan},
   title={The topology of surface diagrams},
   date={2011},
   eprint={http://arxiv.org/abs/1103.6263},
}

\end{biblist}
\end{bibdiv}

\end{document}